\def\R{\mathbb{R}}
\def\A{\mathcal{A}}
\newcommand{\Rmnum}[1]{\expandafter\@slowromancap\romannumeral #1@}
\newtheorem{thm}{Theorem}[section]
\newtheorem{lemma}[thm]{Lemma}
\newtheorem{remark}{Remark}[section]
\newtheorem{theorem}[thm]{Theorem}
\newtheorem{proposition}[thm]{Proposition}
\begin{document}
\author{Hai-Yang Jin}
\address{School of Mathematics, South China University of Technology, Guangzhou 510640, P.R. China}
\email{mahyjin@scut.edu.cn}
\author{Shijie Shi}
\address{College of Big Data and Internet, Shenzhen Technology University, Shenzhen 518118, P.R. China}
\email{shishijie@sztu.edu.cn}
\author{Zhi-an Wang}
\address{Department of Applied Mathematics, Hong Kong Polytechnic University,
Hung Hom, Kowloon, Hong Kong}
\email{mawza@polyu.edu.hk}

\title[Boundedness and Asymptotics of a reaction-diffusion system]{Boundedness and asymptotics of a reaction-diffusion system with density-dependent motility}

\begin{abstract}
We consider the initial-boundary value problem of a system of reaction-diffusion equations with density-dependent motility
\begin{equation*}\label{e1}\tag{$\ast$}
\begin{cases}
u_t=\Delta(\gamma(v)u)+\alpha u F(w) -\theta u, &x\in \Omega, ~~t>0,\\
v_t=D\Delta v+u-v,& x\in \Omega, ~~t>0,\\
w_t=\Delta w-uF(w),& x\in \Omega, ~~t>0,\\
 \frac{\partial u}{\partial \nu}=\frac{\partial v}{\partial \nu}= \frac{\partial w}{\partial \nu}=0,&x\in \partial\Omega, ~~t>0,\\
(u,v,w)(x,0)=(u_0,v_0,w_0)(x), & x\in\Omega,
\end{cases}
\end{equation*}
in a bounded domain $\Omega\subset\R^2$ with smooth boundary, $\alpha$ and $\theta$ are non-negative constants and $\nu$ denotes the outward normal vector of $\partial \Omega$. The random motility function $\gamma(v)$ and functional response function $F(w)$ satisfy the following assumptions:
\begin{itemize}
\item $\gamma(v)\in C^{3}([0,\infty)),~0<\gamma_{1}\leq\gamma(v)\leq \gamma_2,  \ |\gamma'(v)|\leq \eta$ for all $v\geq0$;
\item $F(w)\in C^1([0,\infty)), F(0)=0,F(w)>0 \ \mathrm{in}~(0,\infty)~\mathrm{and}~F'(w)>0 \ \mathrm{on}\ \ [0,\infty)$
\end{itemize}
for some positive constants $\gamma_1, \gamma_2$ and $\eta$.
Based on the method of weighted energy estimates and Moser iteration, we prove that the problem \eqref{e1} has a unique classical global solution uniformly bounded in time.  Furthermore we show that if $\theta>0$, the solution $(u,v,w)$ will converge to $(0,0,w_*)$ in $L^\infty$ with some $w_*>0$ as time tends to infinity, while if $\theta=0$, the solution $(u,v,w)$ will asymptotically converge to $(u_*,u_*,0)$ in $L^\infty$ with $u_*=\frac{1}{|\Omega|}(\|u_0\|_{L^1}+\alpha\|w_0\|_{L^1})$ if $D>0$ is suitably large.

\end{abstract}

\subjclass[2000]{35A01, 35B40, 35B44, 35K57, 35Q92, 92C17}

\keywords{Density-dependent Motility, global existence, asymptotic stability}

\maketitle

\numberwithin{equation}{section}
\section{Introduction and main results}
 The reaction-diffusion models can generate a wide variety of exquisite spatio-temporal patterns arising in embryogenesis and development due to the diffusion-driven (Turing) instability \cite{Murray-book-2001, Kondo-Miura-Science-2010}. In addition, colonies of bacteria and eukaryotes can also generate rich and complex patterns driven by chemotaxis, which typically result from coordinated cell movement, growth and differentiation that often involve the detection and processing of extracellular signals \cite{Berg91, Berg95}. Many of these models invoke nonlinear diffusion which is enhanced by the local environment condition because of population pressure (cf. \cite{mendez2012density}), volume exclusion (cf. \cite{Dyson, Painter-Hillen-CAMQ}) or avoidance of danger (cf. \cite{Murray-book-2001}) and so on. By employing a synthetic biology approach, the authors of \cite{Liu-Science} introduced the so-called ``self-trapping'' mechanism into programmed bacterial {\it Eeshcrichia coli} cells which excrete signalling molecules acyl-homoserine lactone (AHL) such that at low AHL level, the bacteria undergo run-and-tumble random motion and are motile, while at high AHL levels, the bacteria tumble incessantly and become immotile due to the vanishing macroscopic motility. As a result, {\it Eeshcrichia coli} cells formed the outward expanding stripe (wave) patterns in the petri dish. To gain a quantitative understanding of the patterning process in the experiment, the following three-component reaction-diffusion system has been proposed in \cite{Liu-Science}:
\begin{equation}\label{OR-1}
\begin{cases}
u_t=\Delta(\gamma(v)u)+ \frac{\alpha w^2 u}{w^2+\lambda}, &x\in \Omega, ~~t>0,\\
v_t=D\Delta v+u-v,& x\in \Omega, ~~t>0,\\
w_t=\Delta w-\frac{ w^2 u}{w^2+\lambda},& x\in \Omega, ~~t>0,\\
\end{cases}
\end{equation}
where $u(x,t),v(x,t),w(x,t)$ denote the bacterial cell density, concentration of acyl-homoserine lactone (AHL) and nutrient density, respectively; $\alpha,\lambda,D>0$ are constants and $\Omega$ is  bounded domain in $\R^n (n\geq 2)$. The first equation of \eqref{OR-1} describes the random motion of bacterial cells with an AHL-dependent motility coefficient $\gamma(v)$, and a cell growth due to the nutrient intake. The second equation of \eqref{OR-1} describes the  diffusion, production and turnover of AHL, while the third equation provides the dynamics of diffusion and consumption for the nutrient. The prominent feature of the system \eqref{OR-1} is that the cell diffusion rate depends on a motility function $\gamma(v)$ satisfying $\gamma'(v)<0$, which takes into account the repressive effect of AHL concentration on the cell motility (cf. \cite{Liu-Science}).

Though the system \eqref{OR-1} may numerically reproduce some key features of experimental observations as illustrated in \cite{Liu-Science}, the mathematical analysis remains open. Later an alternative simplified two-component so-called ``density-suppressed motility'' model was proposed in \cite{Fu-PRL}:
\begin{equation}\label{OR-2}
\begin{cases}
u_t=\Delta(\gamma(v)u)+\mu u(1-u),&x\in\Omega,t>0,\\
v_t=D\Delta v+u-v,& x\in \Omega, ~~t>0,
\end{cases}
\end{equation}
where the reduced growth rate of cells at high density was used to approximate the nutrient depletion effect in the system \eqref{OR-1}. One can expand the Laplacian term in the first equation of \eqref{OR-2} to obtain a chemotaxis model with signal-dependent motility. Hence the system \eqref{OR-2}  shares some features  similar to the Keller-Segel type chemotaxis model. However  due to the cross-diffusion and the density-suppressed motility (i.e., $\gamma'(v)<0$), even for the simplified system \eqref{OR-2}, there are only few results obtained recently when the Neumann boundary conditions are imposed, as summarized below.
\begin{itemize}
\item[(1)]$\mu>0$: In this case, the first result on the global existence and large time behavior of solutions was established in \cite{JKW-SIAP-2018}. More precisely, it is shown in \cite{JKW-SIAP-2018} that the system \eqref{OR-2} has a unique global classical solution in two dimensional spaces for the motility function $\gamma(v)$ satisfying the assumptions:
$\gamma(v)\in C^3([0,\infty)), \gamma(v)>0\ \  \mathrm{and}~\gamma'(v)<0 \ \ \mathrm{on} ~[0,\infty)$, $\lim\limits_{v \to \infty}\gamma(v)=0$ and $\lim\limits_{v \to \infty}\frac{\gamma'(v)}{\gamma(v)}$ exists. Moreover, the constant steady state $(1,1)$ of \eqref{OR-2} is proved to be globally asymptotically stable if
$
\mu>\frac{K_0}{16}$ where $ K_0=\max\limits_{0\leq v \leq \infty}\frac{|\gamma'(v)|^2}{\gamma(v)}$. Recently, the global existence result has been extended to the higher dimensions ($n\geq 3$)  for large $\mu>0$ in \cite{WW-JMP-2019}. On the other hand, for small $\mu>0$, the existence/{\color{black}nonexistence} of nonconstant steady states  of \eqref{OR-2} was rigorously established under some constraints on the parameters in \cite{MPW-PD-2019} and the periodic pulsating wave is analytically obtained by the multi-scale analysis. When $\gamma(v)$ is a constant step-wise function, the dynamics of discontinuity interface was studied in \cite{SIK-EJAP-2019}.
\item[(2)]$\mu=0$: The existence of global  classical solutions of \eqref{OR-2} in any dimensions has been established in \cite{YK-AAM-2017} in the case of  $\gamma(v)=c_0/v^{k}(k>0)$ for small $c_0>0$. The smallness assumption on $c_0$ is removed lately for the parabolic-elliptic case with $0<k<\frac{2}{n-2}$ in \cite{AY-Nonlinearity-2019}.
Moreover, the global classical solution in two dimensions and global weak solution in three dimensions of \eqref{OR-2} with $\mu=0$ are obtained in \cite{TW-M3AS-2017} under the following assumptions:
\vspace{0.2cm}

\begin{quote}
\begin{itemize}
\item[(H1)] $\gamma(v)\in C^{3}([0,\infty)),$ and there exist $\gamma_{1},\gamma_{2},\eta>0$ such that $0<\gamma_{1}\leq\gamma(v)\leq \gamma_2$, $|\gamma'(v)|\leq \eta$ for all $v\geq0$.
\end{itemize}
\end{quote}
\vspace{0.2cm}

Without the lower-upper bound hypotheses for $\gamma(v)$ as assumed in (H1), {\color{black} if $\gamma(v)$ decays algebraically and $1\leq n\leq 3$, the global existence of weak solutions with large initial data was established in \cite{Kim-NARWA-2019}}. Moreover, if $\gamma(v)$ decays to zero fastly like exponential decay, the solution of \eqref{OR-2} with $\mu=0$ may blow up. For example, if $\gamma(v)=e^{-\chi v}$, by constructing a Lyapunov functional, it is proved in \cite{JW-PAMS-2019} that  there exists a critical mass $m_*=\frac{4\pi}{\chi}$ such that the solution of \eqref{OR-2} with $\mu=0$ exists globally with uniform-in-time bound if $\int_\Omega u_0dx<m_*$ while blows up if $\int_\Omega u_0dx>m_*$ in two dimensions, where $u_0$ denotes the initial value of $u$.
\end{itemize}

Except the above mentioned results on the simplified model \eqref{OR-2},
to our knowledge, there are not any results available for the original three-component system \eqref{OR-1} proposed in \cite{Liu-Science}. The purpose of this paper is to develop some analytical results on the system \eqref{OR-1}. More generally we shall consider the following initial-boundary value problem
\begin{equation}\label{1-1}
\begin{cases}
u_t=\Delta(\gamma(v)u)+\alpha u F(w) -\theta u, &x\in \Omega, ~~t>0,\\
v_t=D\Delta v+u-v,& x\in \Omega, ~~t>0,\\
w_t=\Delta w-uF(w),& x\in \Omega, ~~t>0,\\
 \frac{\partial u}{\partial \nu}=\frac{\partial v}{\partial \nu}= \frac{\partial w}{\partial \nu}=0,&x\in \partial\Omega, ~~t>0,\\
(u,v,w)(x,0)=(u_0,v_0,w_0)(x), & x\in\Omega,
\end{cases}
\end{equation}
where $\theta\geq 0$ accounts for the natural death rate. We assume that the motility function $\gamma(v)$ satisfies the {\color{black}assumption} (H1) as used in  \cite{TW-M3AS-2017} and the intake rate function $F(w)$  satisfies the following conditions
\vspace{0.2cm}

\begin{itemize}
\item[(H2)] $F(w)\in C^1([0,\infty)), F(0)=0,F(w)>0 \ \mathrm{in}~(0,\infty)~\mathrm{and}~F'(w)>0 \ \mathrm{on}\ \ [0,\infty).$
\end{itemize}
\vspace{0.2cm}

The conditions in (H2) can be satisfied by a wide class of functions such as
\begin{equation*}
F(w)=w,\ \ \ \ F(w)=\frac{w}{\lambda+w},\ \ \ F(w)=\frac{w^m}{\lambda+w^m},
\end{equation*}
with constants $\lambda>0$ and $m>1$, which are called the Holling type functional response functions  in the predator-prey system (cf. \cite{JW-JDE-2017,JW-EJAP-2019,WSW-JDE-2016,WWS-M3AS-2018}). Therefore the system \eqref{OR-1} is a special case of the equations in \eqref{1-1} with $\theta=0$ and $F(w)=\frac{w^2}{\lambda+w^2}$. In the sequel, for brevity we shall drop the differential element in the integrals without confusion, namely abbreviating $\int_\Omega fdx$ as $\int_\Omega f$ and $\int_0^t\int_\Omega fdxd\tau$ as $\int_0^t\int_\Omega f$. With the assumptions (H1)-(H2), we first prove the existence of globally bounded solutions to the system \eqref{1-1} in two dimensions as follows.
\begin{theorem}[Global boundedness]\label{GB}
Let $\Omega\subset\R^2$ be a bounded domain with smooth boundary, and the assumptions (H1)-(H2) hold. Assume $(u_0,v_0,w_0)\in [W^{1,\infty}(\Omega)]^3$ with $u_0, v_0,w_0\gneqq 0$. Then for any $\theta\geq 0$,  the problem \eqref{1-1} has a unique global classical solution $(u,v,w) \in [C^0(\bar{\Omega}\times[0,\infty))\cap C^{2,1}(\bar{\Omega}\times(0,\infty))]^3$ satisfying $u,v,w\gneqq 0$ for all $t>0$ and
\begin{equation*}
\|u(\cdot,t)\|_{L^\infty}\leq M,
\end{equation*}
where $M>0$ is s constant such that
\begin{equation}\label{M}
{M:=C_{1}(1+\alpha)^{13}(1+\frac{1}{D})^{12}e^{C_{2}(1+\alpha)^{6}(1+\frac{1}{D})^4}},
\end{equation}
with some constants $C_1,C_2>0$ independent of $D,\alpha$ and $t$.
\end{theorem}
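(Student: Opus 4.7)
My plan is to combine local existence theory with a chain of coupled energy estimates culminating in Moser iteration, tracking every constant in $\alpha$ and $D^{-1}$ so as to reproduce the precise form \eqref{M}. The four stages are: (i) local existence and an extension criterion, (ii) $L^1$ bounds and the $L^\infty$ bound on $w$ coming from the structure of the system, (iii) a coupled $L^2$/$W^{1,2}$ estimate for $(u,v)$, and (iv) an $L^p$-bootstrap plus Moser iteration to reach $L^\infty$.

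Standard Amann-type theory produces a unique classical solution on a maximal interval $[0,T_{\max})$ with blow-up criterion $T_{\max}<\infty\Rightarrow \limsup_{t\nearrow T_{\max}}\|u(\cdot,t)\|_{L^\infty}=\infty$, and the parabolic maximum principle together with $F(0)=0$ enforces nonnegativity of each component. Since $uF(w)\ge 0$, comparison applied to the $w$-equation gives $0\le w\le\|w_0\|_{L^\infty}$, so $F(w)\le K_F:=\max_{[0,\|w_0\|_{L^\infty}]}F$ uniformly. Integrating the $u$- and $w$-equations and forming the combination
\begin{equation*}
\frac{d}{dt}\Bigl(\int_\Omega u+\alpha\int_\Omega w\Bigr)=-\theta\int_\Omega u\le 0
\end{equation*}
yields the time-uniform mass bound $\|u(\cdot,t)\|_{L^1}\le\|u_0\|_{L^1}+\alpha\|w_0\|_{L^1}$ together with the integrated dissipation $\int_0^\infty\!\!\int_\Omega uF(w)\le\|w_0\|_{L^1}$, while the $v$-equation gives $\|v(\cdot,t)\|_{L^1}$ uniformly bounded.

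For the $L^2$ step, writing $\Delta(\gamma(v)u)=\nabla\cdot(\gamma(v)\nabla u+u\gamma'(v)\nabla v)$ and testing by $u$ yields, after one more integration by parts of the cross term,
\begin{equation*}
\tfrac{1}{2}\tfrac{d}{dt}\int_\Omega u^2+\int_\Omega\gamma(v)|\nabla u|^2=\tfrac{1}{2}\int_\Omega u^2\bigl[\gamma''(v)|\nabla v|^2+\gamma'(v)\Delta v\bigr]+\alpha\int_\Omega u^2F(w)-\theta\int_\Omega u^2,
\end{equation*}
while multiplying the $v$-equation by $-\Delta v$ and integrating gives $\tfrac{1}{2}(d/dt)\|\nabla v\|_{L^2}^2+\tfrac{D}{2}\|\Delta v\|_{L^2}^2+\|\nabla v\|_{L^2}^2\le\tfrac{1}{2D}\|u\|_{L^2}^2$, which is the source of the $D^{-1}$ factor. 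The unsigned terms $\int u^2|\nabla v|^2$ and $\int u^2|\Delta v|$ are tamed by the two-dimensional Gagliardo-Nirenberg inequalities $\|u\|_{L^4}^2\le C\bigl(\|\nabla u\|_{L^2}\|u\|_{L^2}+\|u\|_{L^2}^2\bigr)$ and $\|\nabla v\|_{L^4}^2\le C\bigl(\|\Delta v\|_{L^2}\|\nabla v\|_{L^2}+\|\nabla v\|_{L^2}^2\bigr)$, combined with Young's inequality and $\gamma(v)\ge\gamma_1$, so that they can be absorbed into the dissipative terms $\int\gamma(v)|\nabla u|^2$ and $D\int|\Delta v|^2$. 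Feeding the resulting inequality into Gronwall, with the source $\alpha K_F\int u^2$ and the Poincar\'e step closed by the $L^1$ bound, yields a time-uniform $L^2$ estimate of the polynomial/exponential type predicted by \eqref{M}.

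The same scheme applied with test function $u^{p-1}$ (and the corresponding $L^p$-versions of Gagliardo-Nirenberg together with maximal Sobolev regularity for $v$) gives $L^p$ bounds on $u$ for every finite $p$, after which an Alikakos-type Moser iteration delivers the $L^\infty$ bound \eqref{M}; this contradicts the blow-up criterion so $T_{\max}=\infty$, and standard parabolic Schauder theory provides the claimed $C^{2,1}$-regularity. The principal technical difficulty is the unsigned cross-diffusion term $\int u^p\gamma''(v)|\nabla v|^2$, which must be traded against both $\int u^{p-2}\gamma(v)|\nabla u|^2$ and $D\int|\Delta v|^2$ with sufficiently sharp constants to recover the precise prefactor $(1+\alpha)^{13}(1+D^{-1})^{12}$. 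A secondary subtlety is that the reactive term $\alpha uF(w)$ admits no useful dissipation on $u$ alone, so the $\alpha$-dependent Gronwall exponentials propagate through each iteration step and have to be bookkept exactly rather than absorbed, which is what gives rise to the exponential factor in \eqref{M}.
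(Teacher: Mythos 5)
Your overall architecture (local existence, $L^1$/$L^\infty(w)$ bounds, $L^2$ estimate, $L^p$-bootstrap, Moser iteration) matches the paper's, but there is a genuine gap at the $L^2$ stage, which is precisely the hard step. After testing the $u$-equation by $u$ and the $v$-equation by $-\Delta v$, the troublesome term $\int_\Omega u^2|\nabla v|^2\le\|u\|_{L^4}^2\|\nabla v\|_{L^4}^2$ produces, via Gagliardo--Nirenberg and Young, a remainder of the form $C\|u\|_{L^2}^2\|\Delta v\|_{L^2}^2\|\nabla v\|_{L^2}^2$ that cannot be absorbed into the dissipation $D\|\Delta v\|_{L^2}^2$ without already knowing a bound on $\|u\|_{L^2}^2\|\nabla v\|_{L^2}^2$. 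The resulting differential inequality for $y(t)=\|u\|_{L^2}^2+\|\nabla v\|_{L^2}^2$ is of Riccati type, $y'\lesssim y^2$, and Gr\"onwall then gives only a local-in-time bound. The paper closes this loop with an ingredient your proposal omits entirely: a duality (negative Sobolev norm) estimate. One writes $(u+\alpha w)_t+\mathcal{B}(\gamma(v)u+\alpha w)=(\delta\gamma(v)-\theta)u+\delta\alpha w$ with $\mathcal{B}=-\Delta+\delta$ (or the analogous identity with $\mathcal{A}=-\Delta$ on mean-zero functions when $\theta=0$), tests with $\mathcal{B}^{-1}(u+\alpha w)$, and uses $\gamma_1\le\gamma(v)\le\gamma_2$ to make the pairing $\int(u+\alpha w)(\gamma(v)u+\alpha w)$ coercive. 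This yields $\int_t^{t+\tau}\int_\Omega u^2\le K_1(1+\alpha)^2$ with no gradient information on $v$ whatsoever. Only with this space-time bound in hand do $\int_\Omega|\nabla v|^2$ and $\int_t^{t+\tau}\int_\Omega|\Delta v|^2$ become uniformly controlled, so that the $L^2$ Gr\"onwall inequality becomes \emph{linear} in $\|u\|_{L^2}^2$ with a locally integrable coefficient and a good starting time $t_0$ in every unit window; this is also exactly where the exponential factor $e^{C_2(1+\alpha)^6(1+1/D)^4}$ in \eqref{M} originates. Without this step your argument does not yield global boundedness.

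A secondary but real problem: your second integration by parts of the cross term produces $\int_\Omega u^2\gamma''(v)|\nabla v|^2$, but (H1) imposes no uniform bound on $\gamma''$ over $[0,\infty)$, and $\|v\|_{L^\infty}$ is not yet controlled at this stage, so $\gamma''(v)$ cannot be bounded. The paper avoids this by keeping the cross term as $-\int_\Omega\gamma'(v)u\nabla u\cdot\nabla v$ and using only $|\gamma'(v)|\le\eta$. Your use of maximal Sobolev regularity in the $L^p$ step is a legitimate alternative to the paper's semigroup estimates for $\|\nabla v\|_{L^4}$ and $\|\nabla v\|_{L^\infty}$, and the Moser iteration endgame is standard; those parts are fine once the $L^2$ bound is actually established.
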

We remark that we precise the dependence of constant $M$ on {\color{black}$\alpha$} in \eqref{M} so that the results of Theorem \ref{GB} can be applied to the case $\alpha=0$. The explicit dependence of $M$ on $D$ will {\color{black}be used later} to derive the asymptotic stability of solutions when imposing  some {\color{black}conditions} on $D$ as shown in the next theorem.

\begin{theorem}[Asymptotic stability of solutions]\label{LTB}
	Let the assumptions in Theorem \ref{GB} hold and $(u,v,w)$ be the classical solution of \eqref{1-1} obtained in Theorem \ref{GB}. Then the following asymptotic stability results hold.
	\begin{itemize}
		\item[(1)] If $\theta>0$, then it holds that
\begin{equation*}
\lim\limits_{t\to\infty}(\|u(\cdot,t)\|_{L^\infty}+\|v(\cdot,t)\|_{L^\infty}+\|w(\cdot,t)-w_*\|_{L^\infty})=0,
\end{equation*}
where $w_{*}> 0$ is a constant determined by {\color{black}$w_*=\frac{1}{|\Omega|}\|w_0\|_{L^1}-\frac{1}{|\Omega|}\int_0^\infty\int_\Omega uF(w)$.}
\item[(2)] If $\theta=0$, there exists a constant $D_0>0$ such that if $D\geq D_0$, then
\begin{equation*}
	\lim\limits_{t\to\infty}(\|u(\cdot,t)-u_*\|_{L^\infty}+\|v(\cdot,t)-u_*\|_{L^\infty}+\|w(\cdot,t)\|_{L^\infty})=0,
\end{equation*}
where $u_*=\frac{1}{|\Omega|}(\|u_0\|_{L^1}+\alpha\|w_0\|_{L^1})$.
\end{itemize}	
\end{theorem}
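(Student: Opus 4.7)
The plan is: (a) show $u \to 0$ in $L^\infty$; (b) show $v \to 0$ in $L^\infty$; (c) identify the limit of $w$. Combining the $u$- and $w$-equations,
\[
\frac{d}{dt}\int_\Omega(u + \alpha w) = -\theta \int_\Omega u,
\]
so $\int_0^\infty\!\int_\Omega u < \infty$. The maximum principle on $w_t \leq \Delta w$ gives $0 \leq w \leq \|w_0\|_{L^\infty}$, hence $F(w)$ is uniformly bounded and $\int_0^\infty\!\int_\Omega uF(w) < \infty$ as well. Using the uniform $L^\infty$-bound from Theorem~\ref{GB} together with standard parabolic Hölder theory, $u$ is uniformly Hölder on time windows $[t, t+1]$; together with the $L^1_t$-integrability of $\int_\Omega u$ this forces $\|u(\cdot, t)\|_{L^\infty} \to 0$. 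Then the heat semigroup representation for $v_t = D\Delta v + u - v$ yields $v \to 0$ in $L^\infty$.

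For $w$, the mean $\bar w(t) := |\Omega|^{-1}\int_\Omega w$ is nonincreasing and bounded below, hence converges to some $w_*$; integrating the $w$-equation in space-time and passing to the limit gives the explicit formula. Testing the $w$-equation against $w - \bar w$,
\[
\frac{d}{dt}\int_\Omega (w - \bar w)^2 + 2\int_\Omega |\nabla w|^2 = -2\int_\Omega (w - \bar w) u F(w),
\]
Poincaré's inequality and the integrability $\int uF(w) \in L^1_t$ force $\|w - \bar w\|_{L^2} \to 0$, upgraded to $L^\infty$ via uniform Hölder bounds. The subtle point is the strict positivity $w_* > 0$, which I would extract from a quantitative exponential decay of $\int u$ derived from the Lyapunov identity, once a late-time linear lower bound $\int u \geq c\int(u+\alpha w)$ is established using monotonicity of $\bar u$.

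\textbf{Part (2): $\theta = 0$ with $D$ large.} Now $\int_\Omega(u + \alpha w) \equiv u_* |\Omega|$ is conserved. The core is a coupled weighted energy estimate. Testing the $u$-equation by $u - u_*$, expanding $\Delta(\gamma(v)u)$, and handling the cross term $\int \gamma'(v) u\, \nabla u \cdot \nabla v$ by Young's inequality using $\|u\|_{L^\infty} \leq M$,
\[
\frac{d}{dt}\int(u - u_*)^2 + \gamma_1 \int |\nabla u|^2 \leq \frac{\eta^2 M^2}{\gamma_1} \int |\nabla v|^2 + 2\alpha \int (u - u_*) uF(w);
\]
testing the $(v - u_*)$-equation by $v - u_*$ gives
\[
\frac{d}{dt}\int (v - u_*)^2 + 2D \int |\nabla v|^2 + \int (v - u_*)^2 \leq \int (u - u_*)^2.
\]
The functional $E(t) = \int (u - u_*)^2 + K \int (v - u_*)^2$ with $K = \eta^2 M^2/(2\gamma_1 D)$ absorbs the problematic $\int |\nabla v|^2$-term exactly when $D$ is large; Poincaré's inequality together with the conservation identity $\bar u - u_* = -\alpha \bar w$ then closes the $(u - u_*)$-estimate modulo a term proportional to $\bar w^2$ on the right.

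The decay $w \to 0$ is driven by $\frac{d}{dt} \int w^2 + 2 \int |\nabla w|^2 + 2\int uF(w)w = 0$ together with the a priori lower bound $\bar u(t) \geq |\Omega|^{-1}\|u_0\|_{L^1} > 0$ (from monotonicity $\frac{d\bar u}{dt} = \alpha |\Omega|^{-1}\int uF(w) \geq 0$). Since $F(w) \geq F'(0) w/2$ for $w$ small, this produces exponential $L^2$-decay of $w$, then $L^\infty$-decay by parabolic regularity. The main obstacle—and what pins down the threshold $D_0$—is closing the coupled loop: the $E$-inequality carries $\bar w^2$ and $\int F(w)^2$ on its right, while the $w$-dissipation requires a lower bound on $\int u$, and all three quantities must decay via a common Gronwall argument. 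I would package $E(t) + B\int_\Omega w^2$, with $B$ and $K$ jointly tuned to $D$, as a single dissipative Lyapunov functional whose exponential decay under $D \geq D_0$ yields all three $L^\infty$ convergences claimed in the theorem.
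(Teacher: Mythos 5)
Your overall architecture matches the paper's: in part (1) you obtain $u\to0$ from $\int_0^\infty\int_\Omega u<\infty$ plus parabolic H\"older compactness and then $v\to 0$ by comparison/semigroup; in part (2) you use the same two $L^2$ identities for $u-u_*$ and $v-u_*$ and a weighted combination that cancels $\int_\Omega|\nabla v|^2$ for large $D$ (your weight $K=\eta^2M^2/(2\gamma_1 D)$ leads to essentially the paper's condition $D\gtrsim C_p\eta^2M^2/\gamma_1^2$). However, there are two genuine gaps, precisely at the two points you yourself flag as subtle.

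First, in part (1) your route to $w_*>0$ is self-defeating. A late-time bound $\int_\Omega u\ge c\int_\Omega(u+\alpha w)$, inserted into $\frac{d}{dt}\int_\Omega(u+\alpha w)=-\theta\int_\Omega u$, would give exponential decay of $\int_\Omega(u+\alpha w)$ and hence $\int_\Omega w\to0$, i.e.\ $w_*=0$ --- the opposite of the claim; and indeed no such bound can hold when $w_*>0$, since $u\to0$ while $\int_\Omega w\to w_*|\Omega|$. Moreover, even exponential decay of $\int_\Omega u$ only gives $\int_0^\infty\int_\Omega uF(w)<\infty$, not the strict inequality $\int_0^\infty\int_\Omega uF(w)<\|w_0\|_{L^1}$ that $w_*>0$ actually requires. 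The paper's device is different: testing the $w$-equation with $1/w$ gives $\frac{d}{dt}\int_\Omega\ln w\ge-\int_\Omega\frac{F(w)}{w}\,u$ with $\frac{F(w)}{w}=F'(\xi)$ bounded, so $\int_\Omega\ln w(\cdot,t)\ge\int_\Omega\ln w(\cdot,1)-C\int_1^\infty\int_\Omega u>-\infty$ (a pointwise positive lower bound on $w(\cdot,1)$ coming from a comparison argument), which is incompatible with $w\to0$ uniformly.

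Second, in part (2) your decay mechanism for $w$ is circular. The dissipation $\int_\Omega uwF(w)$ cannot be bounded below by a multiple of $\int_\Omega wF(w)$ or $\int_\Omega w^2$ using only the average bound $\bar u\ge|\Omega|^{-1}\|u_0\|_{L^1}$: the mass of $u$ may sit away from where $w$ is large. And $F(w)\ge F'(0)w/2$ is valid only once $w$ is small, which is the conclusion, not a hypothesis. A pointwise bound $u\ge u_*/2$ would repair this, but that follows from $u\to u_*$, which in your scheme (and the paper's) is proved only after $w\to0$; so the combined functional $E(t)+B\int_\Omega w^2$ cannot be shown dissipative in its $w$-component. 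The paper breaks the circle by a compactness argument instead: from $\int_0^\infty\int_\Omega uF(w)<\infty$ and $\int_0^\infty\int_\Omega|\nabla w|^2<\infty$ it splits $\int_j^{j+1}\int_\Omega uF(w)$ into $\int_j^{j+1}\int_\Omega u\bigl(F(w)-\overline{F(w)}\bigr)+\int_j^{j+1}\int_\Omega u\,\overline{F(w)}$, controls the first piece by Poincar\'e and the $\nabla w$ integrability, uses $\|u\|_{L^1}\ge\|u_0\|_{L^1}>0$ on the second to get $\int_j^{j+1}\int_\Omega F(w)\to0$, extracts an a.e.\ convergent subsequence (using $F>0$ on $(0,\infty)$ and $F(0)=0$) to conclude $\int_{t_k}^{t_k+1}\int_\Omega w\to0$, and upgrades to $\|w(\cdot,t)\|_{L^\infty}\to0$ for all $t$ via Gagliardo--Nirenberg and the monotonicity of $t\mapsto\|w(\cdot,t)\|_{L^\infty}$. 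Some such non-circular argument is needed before your Gronwall step for $E(t)$ can be closed.
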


\begin{remark} The results of Theorem \ref{LTB} hold for any $\alpha\geq0$. In the case $\theta=0$ and $\alpha=0$, the system \eqref{1-1} reduces to
\begin{equation}\label{1-1n}
\begin{cases}
u_t=\Delta(\gamma(v)u), &x\in \Omega, ~~t>0,\\
v_t=D\Delta v+u-v,& x\in \Omega, ~~t>0,\\
 \frac{\partial u}{\partial \nu}=\frac{\partial v}{\partial \nu}=0,&x\in \partial\Omega, ~~t>0,\\
(u,v)(x,0)=(u_0,v_0)(x), & x\in\Omega.
\end{cases}
\end{equation}
The global existence of classical solutions of \eqref{1-1n} in two dimensions has been established in \cite{TW-M3AS-2017}, whereas the large time behavior of solution is left open. The result of Theorem \ref{LTB}(2)  solves this open question for large $D>0$.% problem partially by letting $\alpha=0$.
\end{remark}

\noindent{\bf Sketch the proof.} With the special structure of the first equation of \eqref{1-1}, we shall use some ideas in  \cite{JKW-SIAP-2018, TW-M3AS-2017} to show the boundedness of solutions. More precisely, let $\mathcal{A}$ be a self-adjoint realization of $-\Delta$ ({\color{black}see more details in \cite{SA}}) defined on $D(\mathcal{A}):=\{\phi\in W^{2,2}(\Omega)\cap L^{2}(\Omega)|\int_{\Omega}\phi=0\text{~~and~~}\frac{\partial\phi}{\partial\nu}=0
~~\text{on}~~\partial\Omega\}$. Let $\mathcal{B}$ denote the self-adjoint realization of $-\Delta+\delta$ under homogeneous Neumann boundary conditions in $L^2(\Omega)$ for some $\delta>0$. We can use the first and third equations of \eqref{1-1} to obtain
\begin{equation*}\label{0-2}
(u+\alpha w-\bar{u}-\alpha \bar{w})_t+\mathcal{A} (\gamma(v)u+\alpha w-\overline{\gamma(v)u}-\alpha\bar{w})=0, \ \text{if} \ \theta=0,
\end{equation*}
and
\begin{equation*}\label{A3}
		\begin{split}
		(u+\alpha w)_t+\mathcal{B}(\gamma(v)u+\alpha w)
		=(\delta \gamma(v)-\theta) u+\delta\alpha w, \ \text{if} \ \theta>0,
		\end{split}
		\end{equation*}
which enable us to find a constant $c_1>0$ independent of $D$ and $\alpha$ such that $\int_t^{t+\tau}\int_\Omega u^2\leq c_1 (1+\alpha)^2$ for some appropriately small $\tau\in(0,1]$.
Using the smoothing properties of the second equation of \eqref{1-1} we can obtain  the boundedness of $\int_\Omega |\nabla v|^2$ and $\int_t^{t+\tau}\int_\Omega |\Delta v|^2 $. Then we use the direct $L^2$ estimate of $u$ as developed in \cite{JKW-SIAP-2018} to find two positive  constants $c_2,c_3$ independent of $D$ and $\alpha$ such that
\begin{equation*}
		\|u(\cdot,t)\|_{L^2}\leq c_2(1+\alpha)e^{c_3(1+\alpha)^{6}(1+\frac{1}{D})^{4}} \ \ \ \mathrm{for\ all}\ \ t\in(0,T_{max}),
		\end{equation*}
see Lemma \ref{L2-L} for details. Then using the routine bootstrap argument and Moser-iteration method, we derive that $\|u(\cdot,t)\|_{L^\infty}\leq M$ with $M$ satisfying \eqref{M}.

To study the asymptotic behavior, we divide our proofs into two cases: $\theta>0$ and $\theta=0$. When $\theta>0$,  we can obtain  from the first equation of \eqref{1-1} that  $\int_0^\infty\int_{\Omega} u<\infty$,  which combined with  the relative compactness of $(u(\cdot,t))_{t>1}$ in $C(\Omega)$ (see Lemma $\ref{A10*}$) gives
$\|u(\cdot,t)\|_{L^\infty}\to 0$  and hence $\|v(\cdot,t)\|_{L^\infty}\to 0$ as $t\to \infty$ from the second equation of $\eqref{1-1}$. Then using the semigroup estimates and the decay property of $u$, from the third equation we can show that  {\color{black}$\|w(\cdot,t)-w_*\|_{L^\infty}\to 0$} as $t\to\infty$ for some $w_*>0$, where $w_*>0$ is proved by showing
	\begin{equation*}
		\int_\Omega \ln w(x,t)\geq -c_4, \ \mathrm{for\ all}\ t\geq 1
	\end{equation*}
for some constant $c_4>0$, see Lemma \ref{Lw*} for details.

  When $\theta=0$, from the third equation of \eqref{1-1} we have
  \begin{equation*}\label{Ac-2}
\int_0^\infty\int_\Omega uF(w)+\int_0^\infty \int_\Omega|\nabla w|^2<\infty,
\end{equation*}
which, combined with  $\|u_0\|_{L^1}\leq\|u(\cdot,t)\|_{L^1}$, entails us that
\begin{equation*}
\|w(\cdot,t)\|_{L^\infty}\to 0\ \  \mathrm{as} \ \ t\to\infty.
\end{equation*}
On the other hand, using the relations between $M$ and $D$ in \eqref{M}, we can construct a $L^2$ energy functional of $(u,v)$ which along with suitable regularity of $(u,v)$ finally leads to the convergence of $(u,v)$ as claimed.

\section{Local existence and Preliminaries}
%In the sequel, we shall use $C$ or $c_i(i=1,2, \cdots)$ to denote a positive generic constant which may vary in the context. Without confusion, the integration variables $x$ and $t$ will be omitted, for instance $\int_0^a \int_{\Omega} f(x,t)dxdt$ will be abbreviated as $\int_0^a \int_{\Omega} f(x,t)$. Often $\|f\|_{L^p(\Omega)}$ will be written as $\|f\|_{L^p}$.
{\color{black}The existence and uniqueness of local solutions of $\eqref{1-1}$ can be} readily proved by the Amann's theorem \cite{A-DIE-1990, A-Book-1993} (cf. also \cite[Lemma 2.6]{Wang-Hillen}) or the fixed point theorem along with the parabolic regularity theory \cite{TW-2015-SIMA,JKW-SIAP-2018}. We omit the details of the proof for brevity.

\begin{lemma}[Local existence]\label{LS}
	Let $\Omega \subset \R^2$ be a bounded domain with smooth boundary and the assumptions (H1) and (H2) hold. Assume $(u_0,v_0,w_{0})\in [W^{1,\infty}(\Omega)]^3$ with $u_0, v_0, w_{0}\gneqq 0 $. Then there exists $T_{max}\in (0,\infty]$ such that the problem $\eqref{1-1}$ has a unique classical solution $(u,v,w) \in [C^0(\bar{\Omega}\times[0,T_{max}))\cap C^{2,1}(\bar{\Omega}\times(0,T_{max}))]^3$ satisfying $u, v,w>0 $ for all $t>0$. Moreover,
\begin{equation*}\label{a-priori}
	if~~T_{max}<\infty,~~then~~\|u(\cdot,t)\|_{L^\infty}\to\infty ~~as~~t\nearrow T_{max}.
\end{equation*}
\end{lemma}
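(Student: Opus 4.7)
The plan is to cast \eqref{1-1} as a quasilinear parabolic system in divergence form and invoke Amann's existence theory from \cite{A-DIE-1990, A-Book-1993}. Expanding $\Delta(\gamma(v)u)=\nabla\cdot(\gamma(v)\nabla u+u\gamma'(v)\nabla v)$, I would write the full problem as $\vec z_t=\nabla\cdot(\mathbf a(\vec z)\nabla\vec z)+\vec g(\vec z)$ for $\vec z=(u,v,w)^{T}$, with principal part
\[
\mathbf a(\vec z)=\begin{pmatrix}\gamma(v) & u\gamma'(v) & 0\\ 0 & D & 0\\ 0 & 0 & 1\end{pmatrix},
\]
and with homogeneous Neumann conditions for each component. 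Because $\mathbf a$ is upper triangular with diagonal entries $\gamma(v)\geq\gamma_{1}>0$, $D>0$ and $1$, its spectrum lies in $(0,\infty)$ and the system is normally parabolic in Amann's sense. Together with (H1)--(H2) and the $W^{1,\infty}$ regularity of the initial data, the structure coefficients $\mathbf a$ and $\vec g$ satisfy the smoothness hypotheses of \cite[Thm.~7.3]{A-DIE-1990}, which yields a unique maximal classical solution $(u,v,w)\in[C^{0}(\bar\Omega\times[0,T_{max}))\cap C^{2,1}(\bar\Omega\times(0,T_{max}))]^{3}$.

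For the positivity claim I would apply the strong maximum principle equation by equation, in the order $u\to w\to v$. Once $v$ and $w$ are regarded as given, the $u$-equation is linear in $u$ with coefficients constructed from $\gamma(v),\gamma'(v),F(w)$, which are bounded on every compact subinterval of $[0,T_{max})$; since $u_{0}\gneqq0$, the strong maximum principle yields $u>0$. The $w$-equation can then be rewritten as $w_{t}-\Delta w+a(x,t)w=0$ with $a(x,t):=u\,F(w)/w$, where $F(w)/w$ is continuously extended to $w=0$ via $F(0)=0$ and $F\in C^{1}$; boundedness of $a$ on compact subintervals and $w_{0}\gneqq0$ force $w>0$ throughout $\bar\Omega\times(0,T_{max})$. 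Finally, the $v$-equation is linear with nonnegative source $u$ and initial datum $v_{0}\gneqq0$, so $v>0$ follows likewise.

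For the extensibility criterion I would argue by contradiction: assume $T_{max}<\infty$ and $\sup_{t<T_{max}}\|u(\cdot,t)\|_{L^{\infty}}<\infty$. Standard semigroup and parabolic $L^{p}$ estimates applied to the $w$- and $v$-equations then deliver uniform $W^{1,\infty}$ bounds on $w$ and $v$ up to $T_{max}$; Schauder estimates for the linear $u$-equation (treating $v,w$ as known) upgrade these to a uniform $W^{1,\infty}$ bound on $u$ up to $T_{max}$. Restarting the Amann construction at any time slightly below $T_{max}$ produces a classical solution beyond $T_{max}$, contradicting maximality. The only genuine point to check is that Amann's normal parabolicity is not destroyed by the off-diagonal coefficient $u\gamma'(v)$ in $\mathbf a$; because $\mathbf a$ remains upper triangular for every admissible state, its eigenvalues coincide with the diagonal entries $\gamma(v),D,1$ and are unaffected, so normal parabolicity holds uniformly and the plan goes through. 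No single step is truly difficult; the main obstacle is bookkeeping, i.e.\ verifying that the abstract hypotheses of Amann's theorem are met by our specific coefficient structure.
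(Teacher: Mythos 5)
Your proposal is correct and follows exactly the route the paper indicates: the paper omits the proof and simply cites Amann's theory (plus parabolic regularity), which is precisely the framework you set up, including the upper-triangular principal part guaranteeing normal parabolicity, the componentwise strong maximum principle for positivity, and the bootstrap reducing the extensibility criterion to a bound on $\|u(\cdot,t)\|_{L^\infty}$ alone. No discrepancies worth flagging.
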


\begin{lemma}\label{UW}
	The solution $(u,v,w)$ of \eqref{1-1} satisfies
\begin{equation}\label{L1u}	\|u(\cdot,t)\|_{L^{1}}+\alpha\|w(\cdot,t)\|_{L^{1}}+\theta\int_{0}^{t}\|u(\cdot,s)\|_{L^{1}}=\|u_{0}\|_{L^{1}}+\alpha\|w_{0}\|_{L^{1}}, \ \ \ t\in(0,T_{max}),
\end{equation}
	and
\begin{equation}\label{Loow}
	\|w(\cdot,t)\|_{L^\infty}~\text{is decreasing in }t.
\end{equation}
	Moreover, for all $(x,t)\in\Omega\times(0,T_{\max}),$ it follows that
\begin{equation}\label{Fw}
	F(w(x,t))\leq C_{F}=F(\|w_{0}\|_{L^{\infty}}).
\end{equation}
\end{lemma}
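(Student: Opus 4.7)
\textbf{Proof plan for Lemma \ref{UW}.} The three assertions are all consequences of integrating the evolution equations against simple test functions (or constants) and invoking the comparison principle; no delicate estimates are required. The plan is to treat \eqref{L1u} by a direct mass balance, then obtain \eqref{Loow} via the parabolic maximum principle applied to the $w$-equation, and finally read off \eqref{Fw} from the monotonicity of $F$.

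\textbf{Step 1: the mass identity \eqref{L1u}.} First I would integrate the $u$-equation of \eqref{1-1} over $\Omega$. Because of the Neumann boundary condition, the divergence term $\int_\Omega\Delta(\gamma(v)u)$ vanishes, leaving
\begin{equation*}
\frac{d}{dt}\int_\Omega u=\alpha\int_\Omega uF(w)-\theta\int_\Omega u.
\end{equation*}
Integrating the $w$-equation over $\Omega$ in the same way yields
\begin{equation*}
\frac{d}{dt}\int_\Omega w=-\int_\Omega uF(w).
\end{equation*}
Multiplying the second identity by $\alpha$ and adding to the first removes the $\int_\Omega uF(w)$ term:
\begin{equation*}
\frac{d}{dt}\int_\Omega\bigl(u+\alpha w\bigr)=-\theta\int_\Omega u.
\end{equation*}
Integrating this ODE in time from $0$ to $t\in(0,T_{\max})$ and using $u,w\ge 0$ (from Lemma \ref{LS}, so that $\|u\|_{L^1}=\int_\Omega u$ and similarly for $w$) gives precisely \eqref{L1u}.

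\textbf{Step 2: monotonicity of $\|w(\cdot,t)\|_{L^\infty}$.} By Lemma \ref{LS} we have $u\ge 0$, and (H2) gives $F(w)\ge 0$ whenever $w\ge 0$. Hence the source term $-uF(w)$ in the $w$-equation is nonpositive, so for any $t_0\in[0,T_{\max})$ the constant function $\overline{w}\equiv\|w(\cdot,t_0)\|_{L^\infty}$ is a supersolution of the $w$-equation on $\Omega\times(t_0,T_{\max})$ with matching Neumann boundary data and with $\overline{w}\ge w(\cdot,t_0)$. The standard parabolic comparison principle (applied with the Lipschitz reaction $-uF(\cdot)$ along any solution-supersolution pair) then yields $w(x,t)\le\|w(\cdot,t_0)\|_{L^\infty}$ for all $x\in\Omega$ and $t\in[t_0,T_{\max})$, which is exactly \eqref{Loow}.

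\textbf{Step 3: the bound \eqref{Fw}.} Taking $t_0=0$ in Step 2 gives $0\le w(x,t)\le\|w_0\|_{L^\infty}$ for all $(x,t)\in\Omega\times(0,T_{\max})$. Since (H2) asserts $F'>0$ on $[0,\infty)$, the function $F$ is strictly increasing on this interval, so
\begin{equation*}
F(w(x,t))\le F(\|w_0\|_{L^\infty})=C_F,
\end{equation*}
proving \eqref{Fw}.

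\textbf{Anticipated difficulties.} None of the three steps is genuinely difficult: Step 1 is pure bookkeeping with the Neumann boundary conditions, Step 2 relies only on the sign $-uF(w)\le 0$ and a textbook comparison argument, and Step 3 is immediate from (H2). The only point one has to be slightly careful about is to ensure that the positivity $u,w\ge 0$ used in the comparison principle is already in hand; this is guaranteed by the local existence statement in Lemma \ref{LS}.
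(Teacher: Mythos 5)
Your proposal is correct and follows essentially the same route as the paper: combine the first and third equations to cancel the $uF(w)$ term and integrate for \eqref{L1u}, apply the maximum/comparison principle to the $w$-equation for \eqref{Loow}, and use the monotonicity of $F$ from (H2) for \eqref{Fw}. The paper states these steps more tersely, but there is no substantive difference.
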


\begin{proof}
	We first multiply the third equation of \eqref{1-1} by $\alpha$ and add the resulting equation to the first equation of \eqref{1-1}. Then integrating the result over $\Omega\times(0,t)$, we have \eqref{L1u} directly. The application of the maximum principle to the third equation of \eqref{1-1} gives \eqref{Loow}. Furthermore, since $F'(w)>0$ for all $w\geq 0$, one has \eqref{Fw} by using \eqref{Loow}.
	% And $\eqref{La-priori}$ is a direct result from $\eqref{Loow}$.
\end{proof}

Next, we list some well-known estimates for the Neumann heat semigroup for later use.
\begin{lemma}[\cite{Winkler-2010-JDE}]\label{SE}
Let $(e^{t\Delta})_{t\geq 0}$ be the Neumann heat semigroup in $\Omega$, and let $\lambda_1>0$ denote the first nonzero eigenvalue of $-\Delta$ in $\Omega$ under Neumann boundary conditions. Then for all $t>0$, there exist some constants $k_i(i=1,2,3)$ depending only on $\Omega$ such that

$(\mathrm{i})$ ~~If $1\leq q\leq p\leq \infty$, then
\begin{equation}\label{Lp-1}
\|e^{t\Delta}z\|_{L^p}\leq k_{1}\left(1+t^{-\frac{n}{2}(\frac{1}{q}-\frac{1}{p})}\right)e^{-\lambda_1 t}\|z\|_{L^q}
\end{equation}
for all $z\in L^q(\Omega)$ satisfying $\int_{\Omega} z=0$.

$(\mathrm{ii})$ ~~ If $1\leq q\leq p\leq\infty$, then
\begin{equation}\label{Lp-2}
\|\nabla e^{t\Delta}z\|_{L^p}\leq k_2\left(1+t^{-\frac{1}{2}-\frac{n}{2}(\frac{1}{q}-\frac{1}{p})}\right)e^{-\lambda_1 t}\|z\|_{L^q}
\end{equation}
for all $z\in L^q(\Omega)$.

$(\mathrm{iii})$ ~~If $2\leq q\leq p<\infty$, then
\begin{equation}\label{Lp-3}
\|\nabla e^{t\Delta}z\|_{L^p}\leq k_{3}\left(1+t^{-\frac{n}{2}(\frac{1}{q}-\frac{1}{p})}\right)e^{-\lambda_1t}\|\nabla z\|_{L^q}
\end{equation}
for all $z\in W^{1,p}(\Omega)$.
\end{lemma}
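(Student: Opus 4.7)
The proof is a standard application of the spectral decomposition of the Neumann Laplacian combined with pointwise Gaussian bounds on its heat kernel $p_t(x,y)$. Let $\{\phi_k\}_{k\geq 0}$ be an $L^2$-orthonormal basis of Neumann eigenfunctions with eigenvalues $0=\lambda_0<\lambda_1\leq\lambda_2\leq\cdots$, so that $e^{t\Delta}z=\sum_{k\geq 0}e^{-\lambda_k t}\langle z,\phi_k\rangle\phi_k$. The plan rests on two analytic ingredients: the ultracontractivity bound $\|e^{t\Delta}\|_{L^1\to L^\infty}\leq C(1+t^{-n/2})$, which follows from $p_t(x,y)\leq C\min\{t^{-n/2},1\}$, and the gradient kernel estimate $|\nabla_x p_t(x,y)|\leq Ct^{-(n+1)/2}e^{-c|x-y|^2/t}$ for $t\in(0,1]$. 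The factor $e^{-\lambda_1 t}$ in every bound will arise from the spectral gap once the zero mode is eliminated, either by the hypothesis $\int_\Omega z=0$ (as in (i)) or by subtracting the mean.

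For (i), I would split into short and long times. For $t\leq 1$, Riesz--Thorin interpolation between ultracontractivity and the $L^p$-contraction property $\|e^{t\Delta}\|_{L^p\to L^p}\leq 1$ yields $\|e^{t\Delta}z\|_{L^p}\leq Ct^{-\frac{n}{2}(\frac{1}{q}-\frac{1}{p})}\|z\|_{L^q}$, and $e^{-\lambda_1 t}$ is simply bounded by $1$ on this range. For $t>1$, the hypothesis $\int_\Omega z=0$ kills the $k=0$ term in the spectral sum, yielding $\|e^{(t-1)\Delta}z\|_{L^2}\leq Ce^{-\lambda_1 t}\|z\|_{L^q}$ after H\"older's inequality (using $|\Omega|<\infty$ when $q\leq 2$, or $L^q\hookrightarrow L^2$ when $q>2$); a further application of $e^{\Delta}$ from $L^2$ to $L^p$, uniformly bounded by the short-time estimate at $t=1$, completes the argument.

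The proof of (ii) follows the same short/long time pattern. For short times, the gradient kernel bound combined with a Young-type convolution inequality on the bounded domain $\Omega$ yields $\|\nabla e^{t\Delta}\|_{L^q\to L^p}\leq Ct^{-\frac{1}{2}-\frac{n}{2}(\frac{1}{q}-\frac{1}{p})}$ directly. For long times, I would decompose $z=\bar z+(z-\bar z)$ and exploit $\nabla e^{t\Delta}\bar z=0$ to reduce to the mean-zero setting treated in (i). For (iii), the hypothesis $z\in W^{1,p}(\Omega)$ allows the gradient to be transferred onto $z$ via integration by parts against the eigenfunction expansion (the Neumann condition on each $\phi_k$ annihilates the boundary terms), and the $L^q\to L^p$ bounds from (i) are then applied to $\nabla z$; since $\nabla z$ generally fails to have zero mean, its constant component accounts for the $1$ in the prefactor rather than pure polynomial decay. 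The principal technical obstacle is justifying the gradient interchange when $z$ is only of $W^{1,p}$-regularity, which I would handle by approximating $z$ in the $W^{1,p}$-norm by smooth functions satisfying the Neumann boundary condition, so that the interchange holds on the approximating sequence and passes to the limit.
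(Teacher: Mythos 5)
The paper does not prove this lemma at all: it is quoted verbatim from the literature (Lemma~1.3 of \cite{Winkler-2010-JDE}), so there is no in-paper argument to compare against. Judged on its own terms, your outline of (i) and (ii) is the standard and essentially correct route: short-time smoothing from kernel/ultracontractivity bounds plus interpolation, and long-time exponential decay from the spectral gap once the zero mode is removed, glued together by the semigroup property. One slip worth fixing in (i): for $1\le q<2$ you cannot get $\|z\|_{L^2}\le C\|z\|_{L^q}$ from H\"older on a bounded domain (that inequality goes the other way), so the long-time step should be written as $e^{(t-1)\Delta}=e^{(t-2)\Delta}\circ e^{\Delta}$ with the unit-time operator $e^{\Delta}\colon L^q\to L^2$ supplying the missing integrability before the spectral-gap decay is invoked.

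The genuine gap is in your treatment of (iii). The gradient does not commute with the Neumann heat semigroup: integration by parts gives $\langle z,\phi_k\rangle=\lambda_k^{-1}\langle\nabla z,\nabla\phi_k\rangle$ for $k\ge1$, hence $\nabla e^{t\Delta}z=\sum_{k\ge1}e^{-\lambda_k t}\lambda_k^{-1}\langle\nabla z,\nabla\phi_k\rangle\,\nabla\phi_k$, but the vector fields $\nabla\phi_k$ are not eigenfunctions of any Neumann Laplacian acting on $\nabla z$, so this expression is \emph{not} ``$e^{t\Delta}$ applied to $\nabla z$'' and the $L^q\to L^p$ bounds of (i) cannot simply be ``applied to $\nabla z$.'' The approximation-by-smooth-functions remark addresses a non-issue while leaving the real one untouched. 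What actually works: for $p=q=2$ the energy identity $\frac{d}{dt}\|\nabla e^{t\Delta}z\|_{L^2}^2=-2\|\Delta e^{t\Delta}z\|_{L^2}^2\le-2\lambda_1\|\nabla e^{t\Delta}z\|_{L^2}^2$ gives $\|\nabla e^{t\Delta}z\|_{L^2}\le e^{-\lambda_1 t}\|\nabla z\|_{L^2}$ directly; the general case $2\le q\le p<\infty$ requires an $L^q$-gradient contraction estimate $\|\nabla e^{t\Delta}z\|_{L^q}\le C\,e^{-\lambda_1 t}\|\nabla z\|_{L^q}$ (obtained via fractional powers of the Neumann Laplacian or maximum-principle arguments, and this is the technically delicate point), after which the semigroup splitting $\nabla e^{t\Delta}z=\nabla e^{(t/2)\Delta}\bigl(e^{(t/2)\Delta}z\bigr)$ combined with (ii)-type smoothing yields the stated power of $t$. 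This is precisely why the present paper, like most of the chemotaxis literature, simply cites \cite{Winkler-2010-JDE} rather than reproving the estimate.
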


The following lemma will be used to show the boundedness of solution, one can see \cite[Lemma 3.3]{LW-CPDE-2015} or \cite[Lemma 3.4]{SSW14} for details.
\begin{lemma}\label{G1}
	Let $T>0$, $\tau\in(0,T)$, $a>0$ and $b>0$. Suppose that $y:[0,T)\rightarrow[0,\infty)$ is absolutely continuous and fulfils
\begin{equation*}
	y'(t)+ay(t)\leq h(t),\quad\text{for all }t\in(0,T),
\end{equation*}
	with some nonnegative function $h\in L_{loc}^{1}([0,T))$ satisfying
\begin{equation*}
	\int_{t}^{t+\tau}h(s)ds\leq b,\quad\text{for all }t\in[0,T-\tau).
\end{equation*}
	Then
\begin{equation*}
	y(t)\leq\max\left\{y(0)+b,\frac{b}{a\tau}+2b \right\},\quad\text{for all }t\in(0,T).
\end{equation*}
\end{lemma}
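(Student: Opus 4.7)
The plan is to reduce the lemma to an integrating-factor representation of the scalar linear part and then close a sliding-window bound using the hypothesis on $h$. Multiplying $y'+ay\le h$ by $e^{a(\cdot)}$ and integrating over any subinterval $[t_0,t]\subset[0,T)$, I would first record
\begin{equation*}
y(t)\le y(t_0)\,e^{-a(t-t_0)}+\int_{t_0}^{t}e^{-a(t-s)}h(s)\,ds,\qquad 0\le t_0\le t<T.
\end{equation*}

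Short times $t\in(0,\tau]$ are handled by taking $t_0=0$ and bounding $e^{-a(t-s)}\le 1$ together with $\int_0^t h\le\int_0^\tau h\le b$, which already gives $y(t)\le y(0)+b$ and accounts for the first term inside the maximum. For $t>\tau$ I would choose $t_0=t-\tau$; the sliding-window hypothesis then produces the one-step estimate
\begin{equation*}
y(t)\le y(t-\tau)\,e^{-a\tau}+b.
\end{equation*}

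Fix $T'\in(\tau,T)$. Absolute continuity of $y$ ensures $M_{T'}:=\sup_{[0,T']}y<\infty$. The short-time bound on $[0,\tau]$ and the one-step bound on $[\tau,T']$ together yield
\begin{equation*}
M_{T'}\le\max\bigl\{y(0)+b,\ M_{T'}\,e^{-a\tau}+b\bigr\}.
\end{equation*}
If the first alternative holds, we are done. If the second holds, then $M_{T'}\le b/(1-e^{-a\tau})$, and the elementary inequality $1+x\le e^x$ (rearranged as $1/(1-e^{-x})\le 1+1/x$ for $x>0$) upgrades this to $M_{T'}\le b+b/(a\tau)\le 2b+b/(a\tau)$. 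Letting $T'\nearrow T$ concludes.

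The main obstacle is really just algebraic packaging rather than analysis: the raw output of the one-step recursion is $b/(1-e^{-a\tau})$, and converting this into the clean form $b/(a\tau)+2b$ that isolates the blow-up rate for small $a\tau$ from a constant offset is the only step requiring more than Gronwall bookkeeping. Everything else is a routine fixed-point/supremum argument that does not interact with the PDE structure.
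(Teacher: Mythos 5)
Your proof is correct. Note that the paper itself does not prove this lemma: it simply cites \cite[Lemma~3.3]{LW-CPDE-2015} and \cite[Lemma~3.4]{SSW14}, so there is no in-text argument to compare against. Your route is essentially the standard one used in those references: the variation-of-constants representation $y(t)\le y(t_0)e^{-a(t-t_0)}+\int_{t_0}^{t}e^{-a(t-s)}h(s)\,ds$, the window hypothesis on $h$, and the elementary inequality $\frac{1}{1-e^{-a\tau}}\le 1+\frac{1}{a\tau}$ (equivalent to $1+a\tau\le e^{a\tau}$). The one genuine difference is in how you control the tail: the cited proofs typically iterate the representation all the way back to $t_0=0$ or $t_0=(t-\tau)^+$ and sum a geometric series $b\sum_{j\ge 0}e^{-ja\tau}=\frac{b}{1-e^{-a\tau}}$ over consecutive windows, whereas you stop after one window and close the estimate with a supremum/fixed-point argument ($M_{T'}\le M_{T'}e^{-a\tau}+b$, using finiteness of $M_{T'}$ on compact subintervals and then letting $T'\nearrow T$). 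Both are sound; yours avoids bookkeeping with a variable number of windows, at the small cost of needing the a priori finiteness of $\sup_{[0,T']}y$, which absolute continuity supplies. Your final constant $b+\frac{b}{a\tau}$ is in fact slightly sharper than the stated $2b+\frac{b}{a\tau}$, so the claimed bound follows a fortiori. All the hypotheses are used where they must be: $t-\tau\in[0,T-\tau)$ when invoking the window bound, and $t=0$ admissible for the short-time case since $\tau<T$.
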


\section{Boundedness of solutions (Proof of Theorem \ref{GB})}
In this section, we shall establish the boundedness of solution in two dimensions. 	
\begin{lemma}\label{L2u}
	Suppose the assumptions in Theorem \ref{GB} hold. For all $\theta\geq0$, there exists a constant $K_1>0$ independent of $D$ and $\alpha$ such that the solution to \eqref{1-1} satisfies
	\begin{equation}\label{2L1-2}
	\int_t^{t+\tau} \int_{\Omega} u^2\leq K_1(1+\alpha)^{2}, \ \ \mathrm{for\ all}\ \ t\in(0,\widetilde{T}_{max}),
		\end{equation}
		where
		\begin{equation*}\label{DtT}
		\tau:=\min \Big\{ 1,\frac{1}{2} T_{max}\Big\} \ \ \mathrm{and}\ \
		\widetilde{T}_{max}:=T_{max}-\tau.
		%\begin{cases}
		%T_{max}-\tau,&\mathrm{if}\ \ T_{max}<\infty,\\
		%\infty, &\mathrm{if}\ \ T_{max}=\infty.\\
		%\end{cases}
		\end{equation*}
	\end{lemma}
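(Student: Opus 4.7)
The plan is to extract an $L^2_tL^2_x$ bound on $u$ from the combined identity for $u + \alpha w$, in which the reaction coupling $\alpha u F(w)$ cancels and $D$ does not appear. Adding $\alpha$ times the third equation of \eqref{1-1} to the first gives
\begin{equation*}
(u + \alpha w)_t = \Delta\bigl(\gamma(v) u + \alpha w\bigr) - \theta u,
\end{equation*}
which I will test against a suitable inverse-Laplacian of $u + \alpha w$ (or its mean-zero projection), so that the leading term $\int \gamma(v) u^2 \geq \gamma_1 \int u^2$ provides the required coercivity, while $\gamma(v) \leq \gamma_2$ controls everything else through the mass/$L^\infty$ information in Lemma \ref{UW}.

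For $\theta = 0$, Lemma \ref{UW} yields that $\overline{u + \alpha w} \equiv m_0 := \tfrac{1}{|\Omega|}(\|u_0\|_{L^1} + \alpha \|w_0\|_{L^1})$ is conserved, so $\tilde z := u + \alpha w - m_0$ has mean zero and satisfies $\tilde z_t + \mathcal{A}\bigl(\gamma(v)u + \alpha w - \overline{\gamma(v)u} - \alpha \bar w\bigr) = 0$. Testing against $\mathcal{A}^{-1}\tilde z$, invoking self-adjointness, and using $\int_\Omega \tilde z = 0$ to discard the subtracted averages in the second factor, I obtain
\begin{equation*}
\tfrac{1}{2}\tfrac{d}{dt} \int_\Omega \tilde z \,\mathcal{A}^{-1} \tilde z + \int_\Omega (\gamma(v) u + \alpha w)(u + \alpha w) = m_0\int_\Omega \bigl(\gamma(v) u + \alpha w\bigr).
\end{equation*}
Expanding the quadratic term, bounding every cross-term via $\|u\|_{L^1} \leq \|u_0\|_{L^1} + \alpha\|w_0\|_{L^1}$ and $\|w\|_{L^\infty} \leq \|w_0\|_{L^\infty}$, and using $\gamma_1 \leq \gamma(v) \leq \gamma_2$, I arrive at
\begin{equation*}
y'(t) + 2\gamma_1 \int_\Omega u^2 \leq C(1+\alpha)^2, \qquad y(t) := \|\mathcal{A}^{-1/2}\tilde z\|_{L^2}^2,
\end{equation*}
with $C$ independent of $D$ and $\alpha$. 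The Poincar\'e-type inequality $y(t) \leq \lambda_1^{-1}\|\tilde z\|_{L^2}^2 \leq \tfrac{2}{\lambda_1}\bigl(\int u^2 + \alpha^2 |\Omega|\|w_0\|_{L^\infty}^2\bigr)$ converts this into the absorbing form $y'(t) + \tfrac{\gamma_1\lambda_1}{2} y(t) \leq C'(1+\alpha)^2$, so Lemma \ref{G1} (or an elementary ODE argument) yields $y(t) \leq C''(1+\alpha)^2$ uniformly in $t$. Integrating the differential inequality on $[t,t+\tau]$ then gives \eqref{2L1-2}.

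For $\theta > 0$, the sink term $-\theta u$ obstructs the direct use of $\mathcal{A}^{-1}$, so I switch to the shifted operator $\mathcal{B} := -\Delta + \delta$ with Neumann boundary conditions and $\delta \in (0, \theta/\gamma_2]$, so that $\delta\gamma(v) - \theta \leq 0$. The identity becomes
\begin{equation*}
(u + \alpha w)_t + \mathcal{B}\bigl(\gamma(v)u + \alpha w\bigr) = \bigl(\delta\gamma(v) - \theta\bigr) u + \delta\alpha w.
\end{equation*}
Testing against $\mathcal{B}^{-1}(u + \alpha w)$, which is nonnegative by the maximum principle for $\mathcal{B}$ since $u + \alpha w \geq 0$, makes the $u$-contribution on the right nonpositive, while the $\delta\alpha w$ term is controlled via $L^2$-boundedness of $\mathcal{B}^{-1}$ together with $\|w\|_{L^\infty} \leq \|w_0\|_{L^\infty}$. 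The remainder of the argument parallels the $\theta = 0$ case, now with $\mathcal{B}$-norms in place of $\mathcal{A}$-norms. The main obstacle is purely bookkeeping: absorbing every cross-term so that the final constant scales as $(1+\alpha)^2$ and remains independent of $D$ — the latter being transparent because the identity for $u + \alpha w$ never invokes the $v$-equation.
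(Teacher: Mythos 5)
Your proposal is correct and follows essentially the same route as the paper: the dual $H^{-1}$-type energy estimate for $u+\alpha w$, testing against $\mathcal{A}^{-1}$ of the mean-zero part when $\theta=0$ and against $\mathcal{B}^{-1}(u+\alpha w)\ge 0$ with $\mathcal{B}=-\Delta+\delta$, $\delta\le\theta/\gamma_2$, when $\theta>0$, followed by a Poincar\'e-type absorption and Gr\"onwall/integration over $(t,t+\tau)$. The only differences are cosmetic (you keep the quadratic form in terms of $u+\alpha w$ rather than expanding around the means, and use the spectral bound $\lambda_1^{-1}$ in place of a generic Poincar\'e constant), and all the stated estimates check out.
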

	\begin{proof} We divide the proof into two cases: $\theta=0$ and $\theta>0$.

	\noindent {\bf{ Case 1: $\theta=0$.}} In this case, multiplying the third equation of \eqref{1-1} by $\alpha$ and adding the result to the first equation of \eqref{1-1}, one has
	\begin{equation}\label{0-1}
		(u+\alpha w)_t=\Delta (\gamma(v)u+\alpha w).
	\end{equation}
	Then integrating \eqref{0-1} with respect to $x$ with the homogeneous Neumann boundary conditions{\color{black}, one} has
	\begin{equation}\label{0-1*}
		\bar{u}+\alpha \bar{w}=\frac{1}{|\Omega|}\int_\Omega u_0+\alpha \frac{1}{|\Omega|}\int_{\Omega} w_0=\bar{u}_0+\alpha \bar{w}_0,
	\end{equation}
	where $\bar{f}$ denotes the mean of $f$, namely $\bar{f}=\frac{1}{|\Omega|}\int_\Omega fdx$. Let $\mathcal{A}$ be a self-adjoint realization of $-\Delta$ defined on $D(\mathcal{A}):=\{\phi\in W^{2,2}(\Omega)\cap L^{2}(\Omega)|\int_{\Omega}\phi=0\text{~~and~~}\frac{\partial\phi}{\partial\nu}=0~~\text{on}~~\partial\Omega\}$.
%The elliptic theory guarantees the existence of a self-adjoint fractional powers $\mathcal{A}^{-\sigma}$ for any $\sigma>0$.
Then using \eqref{0-1*}, we can rewrite \eqref{0-1} as
\begin{equation}\label{0-2}
(u+\alpha w-\bar{u}-\alpha \bar{w})_t=-\mathcal{A} (\gamma(v)u+\alpha w-\overline{\gamma(v)u}-\alpha\bar{w}).
\end{equation}
Multiplying \eqref{0-2} by $\A^{-1}\left(u+\alpha w-\bar{u}-\alpha\bar{w}\right)$ and integrating the result by parts, we obtain
		\begin{equation*}\label{0-3}
		\begin{split}
		&\frac{1}{2}\frac{d}{dt}\int_{\Omega}|\A^{-\frac{1}{2}}(u+\alpha w-\bar{u}-\alpha\bar{w})|^{2}\\
		&=-\int_{\Omega}\A\left(\gamma(v)u+\alpha w-\overline{\gamma(v)u}-\alpha\bar{w}\right)\cdot\A^{-1}\left(u+\alpha w-\bar{u}-\alpha\bar{w}\right)\\
		&=-\int_{\Omega}(u+\alpha w-\bar{u}-\alpha\bar{w})\cdot\left(\gamma(v)u+\alpha w-\overline{\gamma(v)u}-\alpha\bar{w}\right)\\
&=-\int_{\Omega}\gamma(v)(u-\bar{u})^{2}-\alpha^{2}\int_{\Omega}(w-\bar{w})^{2}
-\alpha\int_{\Omega}\Big(1+\gamma(v)\Big)(u-\bar{u})(w-\bar{w})\\
&\ \ \ \ -\bar{u}\int_{\Omega}\gamma(v)(u-\bar{u})-\alpha\overline{u}\int_{\Omega}\gamma(v)(w-\bar{w}),\\
		\end{split}
		\end{equation*}
which together with the facts $0<\gamma_1\leq \gamma(v)\leq \gamma_{2}$ and the {nonnegativity} of $u,w$, gives
\begin{equation}\label{0-4}
\begin{split}
&\frac{1}{2}\frac{d}{dt}\int_{\Omega}|\A^{-\frac{1}{2}}(u+\alpha w-\bar{u}-\alpha\bar{w})|^{2}+\gamma_1\int_\Omega(u-\bar{u})^{2}
+\alpha^{2}\int_{\Omega}(w-\bar{w})^{2}\\
&=-\alpha\int_{\Omega}\Big(1+\gamma(v)\Big)(u-\bar{u})(w-\bar{w})-\bar{u}\int_{\Omega}\gamma(v)(u-\bar{u})
-\alpha\overline{u}\int_{\Omega}\gamma(v)(w-\bar{w})\\
&\leq \alpha \bar{w}\int_\Omega \Big(1+\gamma(v)\Big)u+ \alpha \bar{u}\int_\Omega \Big(1+\gamma(v)\Big) w+(\bar{u}^2+\alpha \bar{u}\bar{w})\int_\Omega \gamma(v)\\
&\leq \frac{2\alpha+3\alpha\gamma_{2}}{|\Omega|}\|u\|_{L^1}\|w\|_{L^1}
+\frac{\gamma_{2}}{|\Omega|}\|u\|_{L^1}^2,
\end{split}
\end{equation}
in which we have used the fact $\int_{\Omega}(\varphi-\bar{\varphi})^{2}\leq\int_{\Omega}\varphi^{2}$ for all $\varphi\in L^{2}(\Omega)$. We know from Lemma \ref{UW} that $\|u\|_{L^1}\leq\|u_{0}\|_{L^{1}}+\alpha\|w_{0}\|_{L^{1}}\leq|\Omega|(\|u_{0}\|_{L^{\infty}}+\alpha \|w_{0}\|_{L^{\infty}})$ and $\|w\|_{L^1}\leq|\Omega|\|w\|_{L^{\infty}}\leq|\Omega|\|w_{0}\|_{L^{\infty}}$. Therefore, \eqref{0-4} shows
\begin{equation}\label{0-5}
	\frac{d}{dt}\int_{\Omega} |\mathcal{A}^{-\frac{1}{2}}(u+\alpha w-\bar{u}-\alpha\bar{w})|^2+2\gamma_{1}\int_{\Omega} (u-\bar{u})^2+2\alpha^2\int_{\Omega} (w-\bar{w})^2\leq c_{1}(1+\alpha)^{2},
\end{equation}
%$c_{1}=2|\Omega|\big(\gamma_{2}\|u_{0}\|_{L^{\infty}}+2(1+2\gamma_{2})\|w_{0}\|_{L^{\infty}})\big)\big(\|u_{0}\|_{L^{\infty}}+\|w_{0}\|_{L^{\infty}}\big)$
where $c_{1}=4(1+2\gamma_{2})|\Omega|(\|u_{0}\|_{L^{\infty}}+\|w_{0}\|_{L^{\infty}})^{2}$. Because of $\int_\Omega \A^{-\frac{1}{2}}(u+\alpha w-\bar{u}-\alpha\bar{w})=0$, we can apply the Poincar\'{e} inequality with a positive constant $c_{2}$ and the fact $\|w\|_{L^\infty}\leq \|w_0\|_{L^\infty}$ to obtain
\begin{equation}\label{0-5*}
\begin{split}
	&\int_{\Omega} |\A^{-\frac{1}{2}}(u+\alpha w-\bar{u}-\alpha\bar{w})|^2\\
	&\leq c_2\int_{\Omega}|\nabla \A^{-\frac{1}{2}}(u+\alpha w-\bar{u}-\alpha\bar{w})|^2\\
	&=c_2\int_{\Omega}|u+\alpha w-\bar{u}-\alpha\bar{w}|^2\\
	&\leq 2c_2\int_{\Omega} (u-\bar{u})^2+2c_2\alpha^2\int_{\Omega} (w-\bar{w})^2\\
	&\leq 2c_2\int_{\Omega} (u-\bar{u})^2+2c_{2}\alpha^2|\Omega|\|w_{0}\|_{L^{\infty}}^{2}.
\end{split}
\end{equation}
Substituting \eqref{0-5*} into \eqref{0-5}, and letting $X(t):=\int_\Omega |\A^{-\frac{1}{2}}(u+\alpha w-\bar{u}-\alpha\bar{w})|^2$, one yields
\begin{equation}\label{0-6}
	X'(t)+\frac{\gamma_1}{2c_{2}}X(t)+\gamma_{1}\int_{\Omega} (u-\bar{u})^2\leq c_{3}(1+\alpha)^{2},
\end{equation}
where $c_{3}=c_{1}+\gamma_1|\Omega|\|w_{0}\|_{L^{\infty}}^{2}$. Then applying the Gr\"{o}nwall's inequality to \eqref{0-6}, we first obtain
\begin{equation}\label{0-7}
	X(t)=\int_\Omega |\A^{-\frac{1}{2}}(u+\alpha w-\bar{u}-\alpha\bar{w})|^2\leq c_{4}(1+\alpha)^{2},
\end{equation}
where $c_{4}=\frac{2c_{2}c_{3}}{\gamma_{1}}+2c_{2}|\Omega|(\|u_{0}\|_{L^{\infty}}+\|w_{0}\|_{L^{\infty}})^{2}$. Then integrating \eqref{0-6} over $(t,t+\tau)$ with $\tau:=\min \Big\{ 1,\frac{1}{2} T_{max}\Big\}$ and using \eqref{0-7}, one has
\begin{equation}\label{0-8}
\int_t^{t+\tau}\int_{\Omega} (u-\bar{u})^2\leq \frac{c_{3}\tau+c_{4}}{\gamma_1}(1+\alpha)^{2}\leq \frac{c_{3}+c_{4}}{\gamma_1}(1+\alpha)^{2}.
\end{equation}
By the fact $\int_{\Omega}(u-\bar{u})^{2}=\int_{\Omega}u^{2}-\int_{\Omega}\bar{u}^{2}$, it follows from \eqref{0-8} that
\begin{equation*}
\begin{split}
\int_t^{t+\tau}\int_{\Omega}u^2
= \int_t^{t+\tau}\int_{\Omega}(u-\bar{u})^2+\int_t^{t+\tau}\int_{\Omega}\bar{u}^2\leq \frac{c_{3}+c_{4}}{\gamma_1}(1+\alpha)^{2}+\bar{u}^2|\Omega|\tau,
\end{split}
\end{equation*}
which yields \eqref{2L1-2} by using the fact $\bar{u}\leq\|u_{0}\|_{L^{\infty}}+\alpha\|w_{0}\|_{L^{\infty}}$.

{\bf Case 2: $\theta>0$.} In this case, we let $\mathcal{B}$ denote the self-adjoint realization of $-\Delta+\delta$ under homogeneous Neumann boundary conditions in $L^2(\Omega)$, where $0<\delta<\frac{\theta}{\gamma_{2}}$. Then there exists a constant $c_5>0$ such that
\begin{equation}\label{A1}
		\|\mathcal{B}^{-1}\psi\|_{L^2}\leq c_5\|\psi\|_{L^2} \ \ \ \mathrm{ for \ all}\ \psi\in L^2(\Omega)
		\end{equation}
		and
		\begin{equation}\label{A2}
		\|\mathcal{B}^{-\frac{1}{2}}\psi\|_{L^2}^2=\int_{\Omega} \psi\cdot \mathcal{B}^{-1}\psi \leq c_5\|\psi\|_{L^2}^2 \ \ \ \mathrm{ for \ all}\ \psi\in L^2(\Omega),
		\end{equation}
one can see the details in \cite{LW-CPDE-2015}.
		From the system \eqref{1-1}, we have
		\begin{equation*}
		(u+\alpha w)_t=\Delta(\gamma(v)u+\alpha w)-\theta u,
		\end{equation*}
		which can be rewritten as
		\begin{equation}\label{A3}
		\begin{split}
		(u+\alpha w)_t+\mathcal{B}(\gamma(v)u+\alpha w)
		=\delta {\color{black}(\gamma(v)u+\alpha w)}-\theta u
		=(\delta \gamma(v)-\theta) u+\delta\alpha w.
		\end{split}
		\end{equation}
		With the fact $0<\delta<\frac{\theta}{\gamma_{2}}$ and the boundedness of $w$, we derive
		\begin{equation}\label{A4}
		(\delta \gamma (v)-\theta) u+\delta\alpha w\leq (\delta \gamma_{2}-\theta)u+\delta\alpha\|w_{0}\|_{L^{\infty}}\leq c_{6}\alpha,
		\end{equation}
		where $c_{6}=\frac{\theta\|w_{0}\|_{L^{\infty}}}{\gamma_{2}}$. Hence, multiplying \eqref{A3} by $\mathcal{B}^{-1} (u+\alpha w)\geq 0$, and using \eqref{A4}, one has
		\begin{equation*}
		\frac{1}{2}\frac{d}{dt}\int_{\Omega} |\mathcal{B}^{-\frac{1}{2}} (u+\alpha w)|^2
		+\int_{\Omega} (\gamma(v) u+\alpha w) (u+\alpha w)\leq c_{6}\alpha\int_{\Omega} \mathcal{B}^{-1}(u+\alpha w),
		\end{equation*}
		and hence
		\begin{equation}\label{A5}
		\frac{d}{dt}\int_{\Omega} |\mathcal{B}^{-\frac{1}{2}} (u+\alpha w)|^2
		+2c_7\int_{\Omega} (u+\alpha w)^2\leq 2c_6\alpha\int_{\Omega} \mathcal{B}^{-1}(u+\alpha w),
		\end{equation}
with $c_7:=\min\{\gamma_1,1\}$. Using \eqref{A1} and \eqref{A2}, we can derive that
\begin{equation}\label{A6}
\begin{split}
	&\frac{c_7}{2c_5}\int_{\Omega} |\mathcal{B}^{-\frac{1}{2}} (u+\alpha w)|^2
	+2c_6\alpha\int_{\Omega}\mathcal{B}^{-1}(u+\alpha w)\\
	&\leq \frac{c_7}{2}\int_{\Omega} (u+\alpha w)^2
	+2c_{5}c_{6}\alpha|\Omega |^\frac{1}{2}\|u+\alpha w\|_{L^2}\\
	&\leq c_7\int_{\Omega}(u+\alpha w)^2+\frac{2c_{5}^2c_{6}^{2}|\Omega|}{c_7}\alpha^{2}.
\end{split}
\end{equation}
	Substituting \eqref{A6} into \eqref{A5}, and defining $Y(t):=\int_{\Omega} |\mathcal{B}^{-\frac{1}{2}} (u+\alpha w)|^2$, one has
\begin{equation*}\label{A7}
	Y'(t)+\frac{c_7}{2c_5}Y(t)+c_7\int_{\Omega}(u+\alpha w)^2\leq \frac{2c_5^2c_{6}^{2}|\Omega|}{c_7}\alpha^{2},
\end{equation*}
which combined with the Gr\"{o}nwall's inequality gives
$$Y(t)\leq c_{5}|\Omega|\left((\|u_{0}\|_{L^{\infty}}+\|w_{0}\|_{L^{\infty}})^{2}+\frac{4c_{5}^{2}c_{6}^{2}}{c_{7}^{2}}\right)(1+\alpha)^{2}:=c_{8}(1+\alpha)^{2}$$ and thus
\begin{equation*}
	\int_{t}^{t+\tau}\int_{\Omega} u^2\leq \int_{t}^{t+\tau}\int_{\Omega} (u+\alpha w)^2
	\leq \frac{Y(t)}{c_7}+\frac{2c_{5}^2c_{6}^{2}|\Omega|\tau}{c_7^2}\alpha^{2}\leq c_{9}(1+\alpha)^{2},
\end{equation*}
where $c_{9}=\frac{c_{8}}{c_{7}}+\frac{2c_{5}^{2}c_{6}^{2}|\Omega|\tau}{c_{7}^{2}}$, which gives \eqref{2L1-2}. Then we complete the proof of this lemma.
\end{proof}
	
	\begin{lemma}\label{Gv}
		Let the conditions in Theorem \ref{GB} hold. Then there exist two positive constants $K_2,K_3$ independent of $D$, $\alpha$ and $t$ such that
		\begin{equation}\label{gv}
		\int_{\Omega}|\nabla v|^{2}\leq K_{2}(1+\alpha)^{2}\left(1+\frac{1}{D}\right)\quad\mathrm{for\ all}~~t\in(0,T_{max}),
		\end{equation}
		and
		\begin{equation}\label{gv2-1}
		\int_{t}^{t+\tau}\int_{\Omega}|\Delta v|^{2}\leq K_3(1+\alpha)^{2}\left(1+\frac{1}{D}\right)^2 \quad\mathrm{for\ all}~~t\in(0,\widetilde{T}_{max}).
		\end{equation}
	\end{lemma}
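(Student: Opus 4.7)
The plan is to test the $v$-equation in \eqref{1-1} against $-\Delta v$, integrate by parts using the homogeneous Neumann boundary condition, and then apply Young's inequality on the cross term $-\int_\Omega u\Delta v$ to absorb half of $D\int_\Omega|\Delta v|^2$. This produces the differential inequality
\begin{equation*}
\frac{d}{dt}\int_\Omega |\nabla v|^2+2\int_\Omega|\nabla v|^2+D\int_\Omega|\Delta v|^2 \leq \frac{1}{D}\int_\Omega u^2,
\end{equation*}
from which both \eqref{gv} and \eqref{gv2-1} will be extracted by combining Lemma \ref{L2u} (which controls the time-integrated right-hand side) with the Gr\"{o}nwall-type Lemma \ref{G1}.

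For \eqref{gv}, I drop the non-negative term $D\int_\Omega|\Delta v|^2$ to reduce to the ODI $y'+2y\leq h$, where $y(t):=\int_\Omega|\nabla v|^2$ and $h(t):=\frac{1}{D}\int_\Omega u^2$. Lemma \ref{L2u} gives $\int_t^{t+\tau} h \leq K_1(1+\alpha)^2/D$, and Lemma \ref{G1} (applied with $a=2$ and $b=K_1(1+\alpha)^2/D$) then provides a bound whose initial-data part $\int_\Omega|\nabla v_0|^2$ supplies the $D$-independent contribution and whose forcing part carries the factor $1/D$. Grouping the two contributions yields $K_2(1+\alpha)^2(1+\tfrac{1}{D})$.

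For \eqref{gv2-1}, I integrate the ODI over $[t,t+\tau]$, discard the non-negative terms $y(t+\tau)$ and $2\int_t^{t+\tau} y$, and divide by $D$ to obtain
\begin{equation*}
\int_t^{t+\tau}\int_\Omega|\Delta v|^2\leq \frac{y(t)}{D}+\frac{1}{D^2}\int_t^{t+\tau}\int_\Omega u^2.
\end{equation*}
Substituting \eqref{gv} and Lemma \ref{L2u} into the right-hand side gives a bound of the form $(1+\alpha)^2\bigl(\frac{K_2}{D}+\frac{K_1+K_2}{D^2}\bigr)$, which is dominated by $(K_1+K_2)(1+\alpha)^2(1+\tfrac{1}{D})^2$ via the elementary inequality $\frac{1}{D}+\frac{1}{D^2}\leq(1+\tfrac{1}{D})^2$, so \eqref{gv2-1} holds with $K_3:=K_1+K_2$.

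The only delicate point I foresee is bookkeeping the powers of $(1+\alpha)$ and $(1+\tfrac{1}{D})$ carefully, so that the final constants $K_2$ and $K_3$ truly depend on neither $D$ nor $\alpha$ nor $t$; the analytical content itself is nothing more than $L^2$ testing for the heat-type equation in $v$ combined with the two lemmas already at our disposal.
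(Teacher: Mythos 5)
Your proposal is correct and follows essentially the same route as the paper: testing the $v$-equation against $-\Delta v$ to obtain $\frac{d}{dt}\int_\Omega|\nabla v|^2+2\int_\Omega|\nabla v|^2+D\int_\Omega|\Delta v|^2\leq\frac{1}{D}\int_\Omega u^2$, then invoking Lemma \ref{L2u} together with Lemma \ref{G1} for \eqref{gv}, and integrating over $(t,t+\tau)$ for \eqref{gv2-1}. The constant bookkeeping (including $K_3=K_1+K_2$) matches the paper's as well.
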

	\begin{proof}
		We multiply the second equation of \eqref{1-1} by $-\Delta v$ and integrate the result with Cauchy-Schwarz inequality to get  for all $t\in(0,T_{max})$
		\begin{equation*}
		\begin{split}
		\frac{1}{2}\frac{d}{dt}\int_{\Omega}|\nabla v|^{2}=&-D\int_{\Omega}|\Delta v|^{2}-\int_{\Omega}u\Delta v+\int_{\Omega} v\Delta v\\
		\leq&-\frac{D}{2}\int_{\Omega}|\Delta v|^{2}+\frac{1}{2D}\int_{\Omega}u^{2}-\int_{\Omega}|\nabla v|^{2},
		\end{split}
		\end{equation*}
		which leads to
		\begin{equation}\label{gvL2}
		\frac{d}{dt}\int_{\Omega}|\nabla v|^{2}+D\int_{\Omega}|\Delta v|^{2}+2\int_{\Omega}|\nabla v|^{2}\leq\frac{1}{D}\int_{\Omega}u^{2}.
		\end{equation}
		Letting $y(t)=\int_{\Omega}|\nabla v|^{2}$ and $h(t)=\frac{1}{D}\int_{\Omega}u^{2}$, we have from \eqref{gvL2} that
		\begin{equation}\label{gv2}
		y'(t)+2y(t)\leq h(t)\quad\mathrm{for\ all}~~t\in(0,T_{max}).
		\end{equation}
Then applying Lemma \ref{G1} with the fact $\int_{t}^{t+\tau}h(s)ds\leq \frac{K_1(1+\alpha)^{2}}{D}$ for $t\in(0,\widetilde{T}_{max})$ to \eqref{gv2} gives
		\begin{equation*}
\begin{split}
		\int_{\Omega}|\nabla v|^{2}
&\leq \max\Big\{\|\nabla v_0\|_{L^2}^2+\frac{K_1(1+\alpha)^{2}}{D},\frac{K_1(1+\alpha)^{2}}{2D\tau}+\frac{2 K_1(1+\alpha)^{2}}{D}\Big\}\\
&\leq \|\nabla v_0\|_{L^2}^2+\frac{2K_1(1+\alpha)^{2}}{D}+\frac{K_1(1+\alpha)^{2}}{2D\tau}\\
&\leq\left(\|\nabla v_0\|_{L^2}^2+2K_1+\frac{K_1}{2\tau}\right)\left(1+\frac{1}{D}\right)(1+\alpha)^{2}
\quad\mathrm{for\ all}~~t\in(0,T_{max}),
\end{split}
		\end{equation*}
which yields \eqref{gv} {\color{black}with $K_2=\|\nabla v_{0}\|_{L^{2}}^{2}+2K_{1}+\frac{K_{1}}{2\tau}$}. On the other hand, integrating \eqref{gvL2} over $(t,~t+\tau)$ for $t\in(0,\widetilde{T}_{max})$ and using \eqref{gv}, we can derive that
\begin{equation*}
\begin{split}
D\int_t^{t+\tau}\int_\Omega |\Delta v |^2
&\leq \frac{1}{D}\int_t^{t+\tau}\int_\Omega u^2+\int_\Omega |\nabla v|^2\\
&\leq \frac{K_1}{D}(1+\alpha)^{2}+K_2\left(1+\frac{1}{D}\right)(1+\alpha)^{2},
\end{split}
\end{equation*}
which implies \eqref{gv2-1} {\color{black}with $K_{3}=K_{1}+K_{2}$}.
	\end{proof}

	\begin{lemma}\label{L2-L}
Let the assumptions in Theorem \ref{GB} hold. Then there exist two positive constants $K_4$ and $K_{5}$, which are independent of $D$ and $\alpha$, such that
		\begin{equation}\label{L2}
		\|u(\cdot,t)\|_{L^2}\leq K_4(1+\alpha)e^{K_5(1+\alpha)^{6}(1+\frac{1}{D})^{4}} \ \ \ \mathrm{for\ all}\ \ t\in(0,T_{max}).
		\end{equation}
	\end{lemma}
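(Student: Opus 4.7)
My plan is to test the first equation of \eqref{1-1} by $u$ itself and convert the resulting identity into a differential inequality of the form $y'(t) \le A(t)\, y(t)$, where $y(t) := \|u(\cdot,t)\|_{L^2}^{2}$ and $A(t)$ is time-integrable on windows of length $\tau$. Expanding $\Delta(\gamma(v)u) = \nabla\cdot(\gamma(v)\nabla u + u\nabla\gamma(v))$ and integrating by parts against $u$, I get
\begin{equation*}
\tfrac{1}{2}\tfrac{d}{dt}\!\int_{\Omega}u^{2} + \int_{\Omega}\gamma(v)|\nabla u|^{2} + \int_{\Omega}u\,\nabla\gamma(v)\cdot\nabla u = \alpha\!\int_{\Omega}u^{2}F(w) - \theta\!\int_{\Omega}u^{2}.
\end{equation*}
Using $|\gamma'(v)| \le \eta$ and $\gamma(v)\ge \gamma_1$ together with Young's inequality, the cross term is absorbed via $|\int_{\Omega} u\,\nabla\gamma(v)\cdot\nabla u| \le \tfrac{1}{2}\int_{\Omega}\gamma(v)|\nabla u|^{2} + \tfrac{\eta^{2}}{2\gamma_{1}}\int_{\Omega}|\nabla v|^{2}u^{2}$, and the forcing is handled by $\alpha\int_{\Omega} u^{2}F(w) \le \alpha C_{F}\int_{\Omega} u^{2}$ from \eqref{Fw}.

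The core of the argument is then the estimate for $\int_{\Omega}|\nabla v|^{2}u^{2}$. In two dimensions I will use $\int_{\Omega}|\nabla v|^{2}u^{2} \le \|\nabla v\|_{L^{4}}^{2}\,\|u\|_{L^{4}}^{2}$ together with the Gagliardo--Nirenberg inequalities $\|u\|_{L^{4}}^{2} \le C(\|\nabla u\|_{L^{2}}\|u\|_{L^{2}} + \|u\|_{L^{2}}^{2})$ and $\|\nabla v\|_{L^{4}}^{2} \le C(\|\Delta v\|_{L^{2}}\|\nabla v\|_{L^{2}} + \|\nabla v\|_{L^{2}}^{2})$ (the latter following from elliptic regularity for the Neumann Laplacian together with GN applied to $\nabla v$). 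Young's inequality then lets me absorb the $\|\nabla u\|_{L^{2}}^{2}$-factors into the $\gamma_{1}\|\nabla u\|_{L^{2}}^{2}$ term on the left, producing an ODE
\begin{equation*}
\tfrac{d}{dt}\|u\|_{L^{2}}^{2} + \tfrac{\gamma_{1}}{2}\|\nabla u\|_{L^{2}}^{2} \le C\,A(t)\,\|u\|_{L^{2}}^{2},\qquad A(t) := 1+\alpha + \|\nabla v\|_{L^{2}}^{2}\|\Delta v\|_{L^{2}}^{2} + \|\nabla v\|_{L^{2}}^{4}+\text{l.o.t.}
\end{equation*}

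With this in hand, I will time-integrate $A$ using Lemma \ref{Gv}: the uniform-in-time bound $\|\nabla v\|_{L^{2}}^{2} \le K_{2}(1+\alpha)^{2}(1+\tfrac{1}{D})$ combined with $\int_{t}^{t+\tau}\|\Delta v\|_{L^{2}}^{2}\le K_{3}(1+\alpha)^{2}(1+\tfrac{1}{D})^{2}$ yields, after multiplying, a bound of the shape $\int_{t}^{t+\tau} A(s)\,ds \le C(1+\alpha)^{6}(1+\tfrac{1}{D})^{4}$ on every window of length $\tau$. Finally, since Lemma \ref{L2u} provides $\int_{t}^{t+\tau}\|u\|_{L^{2}}^{2}\,ds \le K_{1}(1+\alpha)^{2}$, the uniform Gr\"onwall lemma (a version of Lemma \ref{G1} applied to $y'\le A(t)y$ with integral control of $y$) delivers
\begin{equation*}
\|u(\cdot,t+\tau)\|_{L^{2}}^{2} \le \tfrac{K_{1}(1+\alpha)^{2}}{\tau}\exp\!\Bigl(C(1+\alpha)^{6}(1+\tfrac{1}{D})^{4}\Bigr),
\end{equation*}
which is \eqref{L2}. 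The starting interval $(0,\tau]$ is covered by the standard Gr\"onwall bound using the initial data $\|u_{0}\|_{L^{2}}$.

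\textbf{Main obstacle.} The delicate step is step two: getting the $\int_{\Omega}|\nabla v|^{2}u^{2}$ term under control while simultaneously tracking the precise dependence on $\alpha$ and $D$. It requires chaining two Gagliardo--Nirenberg estimates (one for $u$, one for $\nabla v$), then carefully applying Young's inequality so that only one copy of $\|\nabla u\|_{L^{2}}^{2}$ appears on the right (to be absorbed by the $\gamma_{1}$-term), and bookkeeping the polynomial factors of $(1+\alpha)$ and $(1+\tfrac{1}{D})$ through the multiplicative combination $\sup \|\nabla v\|_{L^{2}}^{2}\cdot \int\|\Delta v\|_{L^{2}}^{2}$. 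The two-dimensionality is essential here, since it is precisely what makes the Gagliardo--Nirenberg exponents cooperate so that $\|\nabla u\|_{L^{2}}^{2}$ appears linearly on the right.
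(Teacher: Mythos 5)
Your proposal is correct and follows essentially the same route as the paper: testing the first equation with $u$, splitting $\int_\Omega u^2|\nabla v|^2$ via H\"older and two Gagliardo--Nirenberg estimates, absorbing $\|\nabla u\|_{L^2}^2$ by Young's inequality, bounding the resulting coefficient with Lemma \ref{Gv}, and closing with the integral control of $\|u\|_{L^2}^2$ from Lemma \ref{L2u}. The only cosmetic difference is that you invoke a uniform Gr\"onwall lemma where the paper explicitly selects a good starting time $t_0\in((t-\tau)^+,t)$ with $\|u(\cdot,t_0)\|_{L^2}^2\leq c_3(1+\alpha)^2$ and integrates the differential inequality from there; these are the same mechanism, and your exponent bookkeeping is consistent with (indeed slightly sharper than) the paper's $(1+\alpha)^6(1+\frac1D)^4$.
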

\begin{proof}
	Multiplying {\color{black}the first equation of} \eqref{1-1} by $u$ and integrating the result with assumptions (H1) and \eqref{Fw} gives
\begin{equation*}
\begin{split}
	\frac{1}{2}\frac{d}{dt}\int_{\Omega}u^{2}%=&\int_{\Omega}u_{t}\cdot u^{p-1}\\
		=&-\int_{\Omega}\nabla u\cdot\nabla(\gamma(v)u)+\alpha\int_{\Omega}F(w)u^2-\theta\int_{\Omega}u^2\\
		\leq&-\int_{\Omega}\gamma(v)|\nabla u|^{2}-\int_{\Omega}\gamma'(v)u\nabla u\cdot\nabla v+\alpha C_{F}\int_{\Omega}u^2\\
		\leq&-\gamma_{1}\int_{\Omega}|\nabla u|^{2}+\eta\int_{\Omega}u|\nabla u||\nabla v|+\alpha C_{F}\int_{\Omega}u^2\\
		\leq&-\frac{\gamma_{1}}{2}\int_{\Omega}|\nabla u|^{2}+\frac{\eta^{2}}{2\gamma_{1}}\int_{\Omega}u^{2}|\nabla v|^{2}+\alpha C_{F}\int_{\Omega}u^2,
		\end{split}
		\end{equation*}
		which yields
		\begin{equation}\label{L2-1}
		\frac{d}{dt}\int_{\Omega}u^2+\gamma_{1}\int_{\Omega}|\nabla u|^{2}\leq\frac{\eta^{2}}{\gamma_{1}}\int_{\Omega}u^2|\nabla v|^{2}+2\alpha C_{F}\int_{\Omega}u^2.
		\end{equation}
Moreover, applying Gagliardo-Nirenberg inequality and Young inequality to the first term on the right hand side of \eqref{L2-1}, we obtain a constant $c_{1}>0$ such that
\begin{equation}\label{L2-2}
\begin{split}
	\frac{\eta^{2}}{\gamma_{1}}\int_{\Omega}u^2|\nabla v|^{2}
	\leq &\frac{\eta^2}{\gamma_1}\|u\|_{L^4}^2\|\nabla v\|_{L^4}^2\\
	\leq &\frac{c_{1}\eta^{2}}{\gamma_{1}}\left(\|\nabla u\|_{L^2}\|u\|_{L^2} +\|u\|_{L^2}^2\right)\left(\|\Delta v\|_{L^2}\|\nabla v\|_{L^2}+\|\nabla v\|_{L^2}^2\right)\\
	\leq &\frac{c_{1}\eta^{2}}{\gamma_{1}}\|\nabla u\|_{L^2}\|u\|_{L^2}\|\Delta v\|_{L^2}\|\nabla v\|_{L^2} +\frac{c_{1}\eta^{2}}{\gamma_{1}}\|\nabla u\|_{L^2}\|u\|_{L^2}\|\nabla v\|_{L^2}^2\\
	&+\frac{c_{1}\eta^{2}}{\gamma_{1}}\|u\|_{L^2}^2\|\Delta v\|_{L^2}\|\nabla v\|_{L^2} +\frac{c_{1}\eta^{2}}{\gamma_{1}}\|u\|_{L^2}^2\|\nabla v\|_{L^2}^2\\
	\leq & \gamma_1 \|\nabla u\|_{L^2}^2+\frac{c_{1}\eta^{2}}{\gamma_{1}}\left(2+\frac{c_{1}\eta^{2}}{2\gamma_{1}^{2}}\|\nabla v\|_{L^2}^2\right)\|u\|_{L^2}^2\|\nabla v\|_{L^2}^2\\
	&+\frac{c_{1}\eta^{2}}{\gamma_{1}}\left(\frac{1}{4}+\frac{c_{1}\eta^{2}}{2\gamma_{1}^{2}}\|\nabla v\|_{L^2}^2\right)\|u\|_{L^2}^2\|\Delta v\|_{L^2}^2.
\end{split}
\end{equation}
Substituting \eqref{L2-2} into \eqref{L2-1}, and using \eqref{gv}, we conclude
		\begin{equation}\label{L2-3}
		\frac{d}{dt}\|u\|_{L^2}^2\leq c_{2}(1+\alpha)^{4}\left(1+\frac{1}{D}\right)^2(1+\|\Delta v\|_{L^{2}}^{2})\|u\|_{L^{2}}^{2},
		\end{equation}
where $c_{2}=\frac{c_{1}\eta^{2}}{\gamma_{1}}\left(2K_{2}+\frac{1}{4}+\frac{c_{1}\eta^{2}K_{2}(1+K_{2})}{2\gamma_{1}^{2}}\right)+2C_{F}$.
On the other hand, using the facts \eqref{2L1-2} and \eqref{gv2-1}, then for any $t\in(0,T_{max})$, we can find a $t_0\geq 0$ satisfying $t_0\in(0,\tilde{T}_{max})$ and $t_0\in((t-\tau)^+,t)$ such that
\begin{equation}\label{L2-4*}
\|u(\cdot,t_0)\|_{L^2}^2\leq c_{3}(1+\alpha)^{2},
\end{equation}
and
\begin{equation}\label{L2-4}
\int_{t_0}^{t_0+\tau}\int_\Omega|\Delta v|^2\leq K_3(1+\alpha)^{2}\left(1+\frac{1}{D}\right)^2,
\end{equation}
with $c_{3}=\|u_{0}\|_{L^{2}}^{2}+\frac{K_{1}}{\tau}$. Then we integrate \eqref{L2-3} over $(t_0,t)$, and use the facts  \eqref{L2-4*}, \eqref{L2-4} and $t\leq t_0+\tau\leq t_0+1$ to obtain
		\begin{equation*}
		\begin{split}
		\|u(\cdot,t)\|_{L^2}^2
		&\leq \|u(\cdot,t_0)\|_{L^2}^2 e^{c_2(1+\alpha)^{4}\left(1+\frac{1}{D}\right)^2\int_{t_0}^t(1+\|\Delta v(\cdot,s)\|_{L^2}^2)ds}\\
		&\leq \|u(\cdot,t_0)\|_{L^2}^2 e^{c_2(1+\alpha)^{4}\left(1+\frac{1}{D}\right)^2\int_{t_0}^t ds+c_2(1+\alpha)^{4}\left(1+\frac{1}{D}\right)^2\int_{t_0}^t\|\Delta v(\cdot,s)\|_{L^2}^2 ds}\\
		&\leq c_{3}(1+\alpha)^{2}e^{c_2(1+\alpha)^{4}\left(1+\frac{1}{D}\right)^2+c_2K_3(1+\alpha)^{6}\left(1+\frac{1}{D}\right)^4},
		\end{split}
		\end{equation*}
		which yields \eqref{L2} {\color{black}with $K_{4}=c_{3}$ and $K_{5}=c_{2}(1+K_{3})$}. Then we finish the proof of this lemma.
	\end{proof}
%Next, we shall show the regularity estimates of $v$. More precisely, the following results hold.
%\begin{lemma}
%Let the conditions in {\color{black}Theorem \ref{GB}} hold. Then there exists a constant $K_{6}>0$ independent of $D$, $\alpha$ and $t$ such that
%\begin{equation}\label{L4v}
%\|\nabla v(\cdot,t)\|_{L^4}\leq K_{6}(1+\alpha)\left(1+\frac{1}{D}\right)e^{K_{5}(1+\alpha)^{6}(1+\frac{1}{D})^{4}}, \quad\mathrm{for\ all}~~t\in(0,T_{max}).
%\end{equation}
%\begin{proof}
%	
%\end{proof}
%\end{lemma}
\begin{lemma}\label{L4-u}
Suppose the conditions in {\color{black}Theorem \ref{GB}} hold. Let $(u,v,w)$ be the solution of the system \eqref{1-1}. Then it holds that
\begin{equation}\label{L4-u1}
\|u(\cdot,t)\|_{L^4}\leq K_{6}(1+\alpha)^{3}\left(1+\frac{1}{D}\right)^{2}e^{3K_{5}(1+\alpha)^{6}(1+\frac{1}{D})^{4}},\quad\mathrm{for\ all}~~t\in(0,T_{max}),
\end{equation}
where $K_6>0$ is a constant independent of $\alpha$, $D$ and $t$.
\end{lemma}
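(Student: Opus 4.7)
The strategy is to run the same kind of weighted energy estimate used in Lemma \ref{L2-L}, one power higher. First I would multiply the first equation of \eqref{1-1} by $u^{3}$ and integrate by parts, so that the diffusion term yields
\begin{equation*}
-\int_{\Omega}\nabla u^{3}\cdot\nabla(\gamma(v)u)=-3\int_{\Omega}\gamma(v)u^{2}|\nabla u|^{2}-3\int_{\Omega}\gamma'(v)u^{3}\nabla u\cdot\nabla v.
\end{equation*}
Using $\gamma(v)\geq\gamma_{1}$, $|\gamma'(v)|\leq\eta$, the pointwise bound $F(w)\leq C_{F}$ from \eqref{Fw}, Young's inequality on the cross term, and the identity $\int_{\Omega}u^{2}|\nabla u|^{2}=\tfrac{1}{4}\|\nabla u^{2}\|_{L^{2}}^{2}$, I would arrive at
\begin{equation*}
\frac{d}{dt}\|u\|_{L^{4}}^{4}+c_{1}\|\nabla u^{2}\|_{L^{2}}^{2}\leq c_{2}\int_{\Omega}u^{4}|\nabla v|^{2}+4\alpha C_{F}\|u\|_{L^{4}}^{4}.
\end{equation*}

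The cross term $\int_{\Omega}u^{4}|\nabla v|^{2}\leq\|u^{2}\|_{L^{4}}^{2}\|\nabla v\|_{L^{4}}^{2}$ is handled by the two 2D Gagliardo--Nirenberg inequalities
\begin{equation*}
\|u^{2}\|_{L^{4}}^{2}\leq c\bigl(\|\nabla u^{2}\|_{L^{2}}\|u\|_{L^{4}}^{2}+\|u\|_{L^{4}}^{4}\bigr),\qquad \|\nabla v\|_{L^{4}}^{2}\leq c\bigl(\|\Delta v\|_{L^{2}}\|\nabla v\|_{L^{2}}+\|\nabla v\|_{L^{2}}^{2}\bigr),
\end{equation*}
combined with the pointwise-in-time estimate $\|\nabla v\|_{L^{2}}^{2}\leq K_{2}(1+\alpha)^{2}(1+\tfrac{1}{D})$ from Lemma \ref{Gv}. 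Expanding the product and applying Young's inequality to absorb the unique occurrence of $\|\nabla u^{2}\|_{L^{2}}^{2}$ into the dissipation on the left, this reduces to a differential inequality of the form
\begin{equation*}
\frac{d}{dt}\|u\|_{L^{4}}^{4}\leq c_{3}(1+\alpha)^{4}\Bigl(1+\tfrac{1}{D}\Bigr)^{\!2}\bigl(1+\|\Delta v\|_{L^{2}}^{2}\bigr)\|u\|_{L^{4}}^{4}.
\end{equation*}

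To finish I would emulate the choice of auxiliary initial time made in Lemma \ref{L2-L}: pick $t_{0}\in((t-\tau)^{+},t)$ at which $\|u(\cdot,t_{0})\|_{L^{4}}^{4}\leq \tau^{-1}\int_{(t-\tau)^{+}}^{t}\|u\|_{L^{4}}^{4}\,ds$. Such a space-time $L^{4}$ integral can be bounded via the 2D interpolation $\|u\|_{L^{4}}^{4}\leq c(\|\nabla u\|_{L^{2}}^{2}\|u\|_{L^{2}}^{2}+\|u\|_{L^{2}}^{4})$, the uniform $L^{2}$ bound \eqref{L2}, and a space-time bound for $\|\nabla u\|_{L^{2}}^{2}$ obtained by integrating \eqref{L2-1} (treated as in Lemma \ref{L2-L} but retaining, say, half of the dissipation term $\gamma_{1}\|\nabla u\|_{L^{2}}^{2}$ on the left) together with \eqref{gv2-1}. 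A Gr\"onwall application to the displayed inequality on $[t_{0},t]$, using $\int_{t_{0}}^{t_{0}+\tau}\|\Delta v\|_{L^{2}}^{2}\,ds\leq K_{3}(1+\alpha)^{2}(1+\tfrac{1}{D})^{2}$, then yields \eqref{L4-u1}.

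The only real subtlety is quantitative: each step is routine Gagliardo--Nirenberg/Young book-keeping, but the powers of $(1+\alpha)$ and $(1+\tfrac{1}{D})$, as well as the exponential factor $e^{K_{5}(1+\alpha)^{6}(1+1/D)^{4}}$ inherited from \eqref{L2}, have to be tracked carefully through the two Gagliardo--Nirenberg applications and the intermediate-time selection so that the final constants match the precise form stated in \eqref{L4-u1}. No genuinely new analytic difficulty arises beyond what was already dealt with in Lemmas \ref{L2u}--\ref{L2-L}.
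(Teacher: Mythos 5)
Your proposal is correct in outline, but it takes a genuinely different route from the paper at the key point, namely how $\|\nabla v\|_{L^4}$ is controlled. The paper interpolates $\|u^2\|_{L^4}^2$ all the way down to $\|u^2\|_{L^1}=\|u\|_{L^2}^2$ (already bounded by Lemma \ref{L2-L}), and then bounds $\|\nabla v(\cdot,t)\|_{L^4}$ \emph{uniformly in time} by applying the Neumann heat semigroup estimates of Lemma \ref{SE} to the variation-of-constants formula \eqref{vq-1} for the rescaled $v$-equation, using only $\sup_t\|u\|_{L^2}$; this turns \eqref{uL4} into $\frac{d}{dt}\int_\Omega u^4+\int_\Omega u^4\le\mathrm{const}$, so a single Gr\"onwall application gives the uniform bound with no intermediate-time selection. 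You instead keep $\|\Delta v\|_{L^2}^2$ in the Gr\"onwall factor (via the space-time bound \eqref{gv2-1} and the uniform bound \eqref{gv}), which forces you to manufacture a good starting time $t_0$ with controlled $\|u(\cdot,t_0)\|_{L^4}$, and that in turn requires an auxiliary space-time bound on $\int_t^{t+\tau}\|\nabla u\|_{L^2}^2$ extracted from \eqref{L2-1}; all of these steps do go through with the estimates already available, so the argument closes. What the paper's route buys is brevity and cleaner constant-tracking; what yours buys is independence from the $L^4$ semigroup estimate for $\nabla v$ (you only use energy-type information on $v$). One caveat: the exponential rate produced by your Gr\"onwall step is governed by the constant multiplying $(1+\alpha)^6(1+1/D)^4$ in $c_3(1+K_3)$ rather than by $3K_5$ literally, so the bound you obtain has the same structural form as \eqref{L4-u1} but not necessarily the stated multiple of $K_5$; since only the structural form enters \eqref{M} and Lemma \ref{LuI} (where $C_2$ is an unspecified constant), this is harmless, but it should be stated rather than claimed to "match".
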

\begin{proof}
	With the fact that $0\leq F(w)\leq C_{F}$ from \eqref{Fw} and the assumptions (H1), we multiply the first equation of \eqref{1-1} with $u^{3}$ and integrate the result to have
	\begin{equation*}\label{uLk1}
	\begin{split}
		\frac{1}{4}\frac{d}{dt}\int_{\Omega}u^{4}%=&\int_{\Omega}u_{t}\cdot u^{k-1}\\
		=&-3\int_{\Omega}u^{2}\nabla u\cdot\nabla(\gamma(v)u)+\alpha\int_{\Omega}F(w)u^{4}-\theta\int_{\Omega}u^{4}\\
		\leq&-3\int_{\Omega}\gamma(v)u^2|\nabla u|^{2}-3\int_{\Omega}\gamma'(v)u^{3}\nabla u\cdot\nabla v+\alpha C_{F}\int_{\Omega}u^{4}\\
		\leq&-3\gamma_{1}\int_{\Omega}u^{2}|\nabla u|^{2}+3\eta\int_{\Omega}u^{3}|\nabla u||\nabla v|+\alpha C_{F}\int_{\Omega}u^{4}\\
		\leq&-\frac{3\gamma_{1}}{2}\int_{\Omega}u^{2}|\nabla u|^{2} +\frac{3\eta^{2}}{2\gamma_{1}}\int_{\Omega}u^{4}|\nabla v|^{2} +\alpha C_{F}\int_{\Omega}u^{4}{\color{black},}
	\end{split}
	\end{equation*}
which yields that
\begin{equation}\label{Lu4-1}
\frac{d}{dt}\int_\Omega u^4+\frac{3\gamma_1}{2}\int_\Omega |\nabla u^2|^2\leq \frac{6\eta^{2}}{\gamma_{1}}\int_{\Omega}u^4|\nabla v|^{2}+4\alpha C_{F}\int_{\Omega}u^{4}.
\end{equation}
Using Gagliardo-Nirenberg inequality and Young's inequality, along with the facts \eqref{L4v} and $\|u^2\|_{L^1}=\|u\|_{L^2}^2$, we can find a constant $c_{1}>0$ independent of $\alpha$ and $D$, such that
\begin{equation}\label{Lu4-2}
\begin{split}
\frac{6\eta^{2}}{\gamma_{1}}\int_{\Omega}u^{4}|\nabla v|^{2}
&\leq \frac{6\eta^{2}}{\gamma_{1}}\left(\int_{\Omega}u^{8}\right)^\frac{1}{2}\left(\int_\Omega |\nabla v|^{4}\right)^\frac{1}{2}\\
&=\frac{6\eta^{2}}{\gamma_{1}}\|u^2\|_{L^4}^{2}\|\nabla v\|_{L^4}^{2}\\
&\leq \frac{6\eta^{2}c_{1}}{\gamma_{1}}\left(\|\nabla u^2\|_{L^2}^{\frac{3}{2}}\|u^2\|_{L^1}^{\frac{1}{2}}+\|u^2\|_{L^1}^2\right)\|\nabla v\|_{L^4}^{2}\\
&\leq \frac{6\eta^{2}c_{1}}{\gamma_{1}}\|\nabla u^2\|_{L^2}^{\frac{3}{2}}\|u\|_{L^2}\|\nabla v\|_{L^4}^{2}+\frac{6\eta^{2}c_{1}}{\gamma_{1}}\|u\|_{L^2}^{4}\|\nabla v\|_{L^4}^{2}\\
&\leq \gamma_1\|\nabla u^2\|_{L^2}^2+c_{2}\|\nabla v\|_{L^4}^2\|u\|_{L^2}^4(\|\nabla v\|_{L^4}^6+1),
\end{split}
\end{equation}
where
\begin{equation*}
c_2:=\frac{1}{4}\left(\frac{3}{4\gamma_1}\right)^3
\left(\frac{6\eta^2c_1}{\gamma_1}\right)^4+\frac{6\eta^{2}c_1}{\gamma_{1}}.
\end{equation*}
Furthermore, using the Gagliardo-Nirenberg inequality and Young's inequality again, we can find a constant $c_3>0$ independent of $D$ and $\alpha$, such that
\begin{equation}\label{Lu4-3}
\begin{split}
(1+4\alpha C_{F})\int_\Omega u^4&=(1+4\alpha C_{F})\|u^2\|_{L^2}^2\\
&\leq c_3(1+4\alpha C_{F})\left(\|\nabla u^2\|_{L^2}^\frac{3}{2}\|u^2\|_{L^\frac{1}{2}}^{\frac{1}{2}}+\|u^2\|_{L^\frac{1}{2}}^2\right)\\
&\leq c_3(1+4\alpha C_{F})\left(\|\nabla u^2\|_{L^2}^\frac{3}{2}\|u\|_{L^1}+\|u\|_{L^1}^{4}\right)\\
&\leq \frac{\gamma_1}{2}\|\nabla u^2\|_{L^2}^2+c_4(1+\alpha)^{8},
\end{split}
\end{equation}
where $c_{4}=\left(\frac{c_{3}^{4}}{4}(4C_{F}+1)^{4}\left(\frac{3}{2\gamma_{1}}\right)^{3}+c_{3}(4C_{F}+1)\right)(\|u_{0}\|_{L^{1}}+\|w_{0}\|_{L^{1}})^{4}$. Substituting \eqref{Lu4-2} and \eqref{Lu4-3} into \eqref{Lu4-1},  one has
\begin{equation}\label{uL4}
\begin{split}
\frac{d}{dt}\int_\Omega u^4+\int_\Omega u^4
&\leq c_2\|\nabla v\|_{L^4}^2\|u\|_{L^2}^4(\|\nabla v\|_{L^4}^6+1)+c_4(1+\alpha)^{8}\\
\end{split}
\end{equation}
By the scaling $\tilde{t}=D t$, and applying the variation-of-constants formula to the second equation of \eqref{1-1}, one has
%the second equation of \eqref{1-1} can be rewritten as
%\begin{equation}\label{KE-1}
%v_{\tilde{t}}=\Delta v-\frac{1}{D}v+\frac{1}{D}u.
%\end{equation}
%Then applying the variation-of-constants formula to \eqref{KE-1}, one has
\begin{equation}\label{vq-1}
v(\cdot,\tilde{t})=e^{(\Delta -\frac{1}{D})\tilde{t}}v_0+\frac{1}{D}\int_0^{\tilde{t}}
e^{(\Delta-\frac{1}{D})(\tilde{t}-s)}u(\cdot,s)ds.
\end{equation}
Then using the semigroup estimates \eqref{Lp-2} and \eqref{Lp-3}, we derive from \eqref{vq-1}
%Choosing $p=2$ and $q=4$ in Lemma \ref{vq}, then one can check that $\lambda_*=\frac{3}{4}$ due to $n=2$. Then using \eqref{L2} and \eqref{KE}, we obtain
\begin{equation*}
\begin{split}
{\color{black}\|\nabla v(\cdot,\tilde{t})\|_{L^4}}
&\leq \|\nabla e^{(\Delta -\frac{1}{D})\tilde{t}}v_0 \|_{L^4}+\frac{1}{D}\int_0^{\tilde{t}}\|\nabla e^{(\Delta-\frac{1}{D})(\tilde{t}-s)}u(\cdot,s)\|_{L^4}ds\\
%&\leq e^{-\frac{\tilde{t}}{D}}\|\nabla e^{\Delta \tilde{t} }v_0\|_{L^4}+\frac{1}{D}\int_0^{\tilde{t}}\|\nabla e^{\Delta (\tilde{t}-s)} u(\cdot,s)\|_{L^4}ds\\
&\leq k_1e^{-\lambda_1\tilde{t}}\|\nabla v_0\|_{L^4}
+\frac{k_2}{D}\int_0^{\tilde{t}}
\left(1+(\tilde{t}-s)^{-\frac{3}{4}}\right)
e^{-\lambda_1(\tilde{t}-s)}\|u(\cdot,s)\|_{L^2}ds\\
%&\leq k_1\|\nabla v_0\|_{L^4}+\frac{k_2K_4}{D} (1+\alpha)e^{K_5(1+\alpha)^{6}(1+\frac{1}{D})^{4}}\int_0^{\tilde{t}}\left(1+z^{-\frac{3}{4}}\right)
%e^{-\lambda_1z}dz\\
%&\leq k_1\|\nabla v_0\|_{L^4}+\frac{k_2K_4}{D} (1+\alpha)e^{K_5(1+\alpha)^{6}(1+\frac{1}{D})^{4}}\int_0^{\infty}\left(1+z^{-\frac{3}{4}}\right)
%e^{-\lambda_1z}dz\\
&\leq k_1\|\nabla v_0\|_{L^4}+\frac{k_{2}K_{4}}{D\lambda_1}\left(1+\Gamma(1/4)\lambda_{1}^{\frac{3}{4}}\right)(1+\alpha)e^{K_{5}(1+\alpha)^{6}(1+\frac{1}{D})^{4}},
\end{split}
\end{equation*}
  which gives
  \begin{equation}\label{L4v}
\|\nabla v(\cdot,t)\|_{L^4}\leq c_{5}(1+\alpha)\left(1+\frac{1}{D}\right)e^{K_{5}(1+\alpha)^{6}(1+\frac{1}{D})^{4}},
\end{equation}
with $c_5=k_{1}\|\nabla v_{0}\|_{L^{4}}+\frac{k_{2}K_{4}}{\lambda_{1}}	\left(1+\Gamma(1/4)\lambda_{1}^{\frac{3}{4}}\right)$. Then substituting \eqref{L4v} into \eqref{uL4}, one can find a constant $c_6:=c_{2}K_{4}^{4}c_{5}^{2}(c_{5}^{6}+1)+c_{4}$ to obtain
\begin{equation*}\label{uL4}
\begin{split}
\frac{d}{dt}\int_\Omega u^4+\int_\Omega u^4
&\leq c_{6}(1+\alpha)^{12}\left(1+\frac{1}{D}\right)^{8}e^{12K_{5}(1+\alpha)^{6}(1+\frac{1}{D})^{4}}.
\end{split}
\end{equation*}
 This along with the Gr\"{o}nwall's inequality yields a constant $c_7=c_{6}+\|u_{0}\|_{L^{4}}^{4}$ independent of $D$ and $\alpha$ so that
\begin{equation*}
\begin{split}
\|u(\cdot,t)\|_{L^4}^4
&\leq\|u_0\|_{L^4}^4+ c_{6}(1+\alpha)^{12}\left(1+\frac{1}{D}\right)^{8}e^{12K_{5}(1+\alpha)^{6}(1+\frac{1}{D})^{4}}\\
&\leq c_{7}(1+\alpha)^{12}\left(1+\frac{1}{D}\right)^{8}e^{12K_{5}(1+\alpha)^{6}(1+\frac{1}{D})^{4}},
\end{split}
\end{equation*}
which yields \eqref{L4-u1}.
\end{proof}

\begin{lemma}\label{LuI}
Let the conditions in Lemma \ref{L4-u} hold. Suppose $(u,v,w)$ is a solution of \eqref{1-1}. Then it follows that
\begin{equation}\label{LuI-1}
\|u(\cdot,t)\|_{L^\infty}\leq K_7(1+\alpha)^{13}\left(1+\frac{1}{D}\right)^{12}e^{12K_{5}(1+\alpha)^{6}(1+\frac{1}{D})^{4}}\quad\mathrm{for\ all}~~t\in(0,T_{max}),
\end{equation}
where the constant $K_7>0$ is independent of $D$ and $\alpha$.
% depends on $\Omega,\|u_0\|_{L^\infty},\|v_0\|_{W^{1,\infty}},\|w_0\|_{L^{\infty}},\gamma_1,\gamma_2,\eta$, but
\end{lemma}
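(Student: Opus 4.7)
The goal is to convert the $L^{4}$ information on $u$ and $\nabla v$ into an $L^{\infty}$ bound on $u$. I would proceed in two steps.

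First, I would upgrade the $L^{4}$ bound $\|\nabla v\|_{L^{4}}$ from \eqref{L4v} to an $L^{\infty}$ bound. Applying Lemma \ref{SE}(ii) with $n=2$, $q=4$, $p=\infty$ to the Duhamel representation
$$v(\cdot,t)=e^{-t}e^{tD\Delta}v_{0}+\int_{0}^{t}e^{-(t-s)}e^{(t-s)D\Delta}u(\cdot,s)\,ds,$$
the resulting integral $\int_{0}^{t}\bigl(1+((t-s)D)^{-3/4}\bigr)e^{-(t-s)(1+\lambda_{1}D)}\,ds$ is finite and bounded by a multiple of $1+\tfrac{1}{D}$. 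Feeding in the $L^{4}$ estimate \eqref{L4-u1} on $u$, this yields
$$\|\nabla v(\cdot,t)\|_{L^{\infty}}\leq C(1+\alpha)^{3}\bigl(1+\tfrac{1}{D}\bigr)^{3}e^{3K_{5}(1+\alpha)^{6}(1+\frac{1}{D})^{4}}.$$

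Next, with $\nabla v$ in $L^{\infty}$, I would test the first equation of \eqref{1-1} against $u^{p-1}$ for each $p\geq 4$, mimicking the computation in \eqref{L2-1}--\eqref{L2-2} but with $\|\nabla v\|_{L^{4}}$ replaced by $\|\nabla v\|_{L^{\infty}}$, to obtain
$$\frac{d}{dt}\|u\|_{L^{p}}^{p}+\frac{2(p-1)\gamma_{1}}{p}\|\nabla u^{p/2}\|_{L^{2}}^{2}\leq p(p-1)\Bigl(\tfrac{\eta^{2}}{2\gamma_{1}}\|\nabla v\|_{L^{\infty}}^{2}+\alpha C_{F}\Bigr)\|u\|_{L^{p}}^{p}.$$
Then I would invoke the two-dimensional Gagliardo--Nirenberg inequality in the form $\|f\|_{L^{2}}^{2}\leq\varepsilon\|\nabla f\|_{L^{2}}^{2}+C_{\varepsilon}\|f\|_{L^{1}}^{2}$ applied to $f=u^{p/2}$ to absorb the $\|u\|_{L^{p}}^{p}$ factor on the right into the dissipative term on the left, producing a remainder controlled by $\|u\|_{L^{p/2}}^{p}$. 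The resulting recursion fits into the Alikakos--Moser iteration scheme along $p_{k}=2^{k+2}$: passing to the limit $k\to\infty$ yields the $L^{\infty}$ bound, using that the associated series $\sum 2^{-k}\log B_{p_{k}}$ converges.

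The main obstacle is not conceptual but the careful bookkeeping of the $\alpha$- and $D$-dependencies through each step. The polynomial factors $(1+\alpha)^{13}$ and $(1+1/D)^{12}$ in \eqref{LuI-1} come from combining the base $L^{4}$ bound \eqref{L4-u1} with the powers contributed by $\|\nabla v\|_{L^{\infty}}$ and by the Moser sum; the exponential factor $e^{12K_{5}(1+\alpha)^{6}(1+\frac{1}{D})^{4}}$ arises because Young's inequality squares the $\|\nabla v\|_{L^{\infty}}$ contribution, so the coefficient $3K_{5}$ already present in the $L^{4}$ bound is raised to $12K_{5}$ after this final iteration. Once these contributions are tracked, \eqref{LuI-1} follows in the standard way.
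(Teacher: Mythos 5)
Your proposal follows essentially the same route as the paper: first promote the $L^{4}$ bound on $u$ to an $L^{\infty}$ bound on $\nabla v$ via the Neumann heat semigroup estimates, then test the first equation against $u^{p-1}$, absorb the reaction term with the interpolation inequality $\|f\|_{L^2}^2\leq\varepsilon\|\nabla f\|_{L^2}^2+C_\varepsilon\|f\|_{L^1}^2$ applied to $f=u^{p/2}$, and close with Alikakos--Moser iteration; the bookkeeping of the $(1+\alpha)$- and $(1+1/D)$-powers also matches (the coefficient $12K_5$ arising from squaring $e^{3K_5\cdots}$ once in $\|\nabla v\|_{L^\infty}^2$ and once more in the $\varepsilon$-absorption). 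The argument is correct and no substantive differences from the paper's proof remain.
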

\begin{proof}
Using \eqref{Lp-2}, \eqref{L4-u1} and the estimate
$
\|\nabla e^{\tilde{t}\Delta}v_0\|_{L^\infty}\leq c_{1}\|v_0\|_{W^{1,\infty}} \ \mathrm{for\ all}\ \tilde{t}>0$(see \cite{Winkler-DCDSA-2016}), from \eqref{vq-1} we have
\begin{equation*}\label{LvI}
\begin{split}
\|\nabla v(\cdot,{\color{black}\tilde{t}})\|_{L^\infty}
&\leq \|\nabla e^{(\Delta -\frac{1}{D})\tilde{t}}v_0 \|_{L^\infty}+\frac{1}{D}\int_0^{\tilde{t}}\|\nabla e^{(\Delta-\frac{1}{D})(\tilde{t}-s)}u(\cdot,s)\|_{L^\infty}ds\\
%&\leq \|\nabla e^{\Delta\tilde{t}}v_0 \|_{L^\infty}+\frac{1}{D}\int_0^{\tilde{t}}\|\nabla e^{\Delta(\tilde{t}-s)}u(\cdot,s)\|_{L^\infty}ds\\
& \leq c_1\|v_0\|_{W^{1,\infty}}+\frac{k_2}{D}\int_0^{\tilde{t}}
{\color{black}\left(1+(\tilde{t}-s)^{-\frac{3}{4}}\right)}e^{-\lambda_1(\tilde{t}-s)}
\|u(\cdot,s)\|_{L^4}ds\\
&\leq c_1\|v_0\|_{W^{1,\infty}}+\frac{k_2K_{6}}{D\lambda_1}\left(1+\Gamma(1/4)\lambda_{1}^{\frac{3}{4}}\right)(1+\alpha)^{3}\left(1+\frac{1}{D}\right)^{2}
e^{3K_{5}(1+\alpha)^{6}(1+\frac{1}{D})^{4}}
\end{split}
\end{equation*}
%where we have used the estimate (see \cite{Winkler-DCDSA-2016})
%\begin{equation*}
%\|\nabla e^{\tilde{t}\Delta}\psi\|_{L^\infty}\leq c_{1}\|\psi\|_{W^{1,\infty}} \ \mathrm{for\ all}\ \tilde{t}>0 \ \mathrm{and\ any}\ \psi \in W^{1,\infty}(\Omega).
%\end{equation*}
%Then substituting \eqref{L4-u1} into \eqref{LvI}, one can derive that
% \begin{equation*}
%\|\nabla v(\cdot, t)\|_{L^\infty}\leq c_1\|v_0\|_{W^{1,\infty}}+\frac{k_2K_{7}}{D\lambda_1}\left(1+\Gamma(1/4)\lambda_{1}^{\frac{3}{4}}\right)(1+\alpha)^{3}\left(1+\frac{1}{D}\right)^{2}e^{3K_{5}(1+\alpha)^{6}(1+\frac{1}{D})^{4}},
%\end{equation*}
which implies
\begin{equation}\label{LuI-2}
\|\nabla v(\cdot,t)\|_{L^\infty}\leq c_2(1+\alpha)^{3}\left(1+\frac{1}{D}\right)^{3}e^{3K_{5}(1+\alpha)^{6}(1+\frac{1}{D})^{4}},
\end{equation}
where $c_2:=c_1\|v_0\|_{W^{1,\infty}}+\frac{k_2K_{6}}{\lambda_1}\left(1+\Gamma(1/4)\lambda_{1}^{\frac{3}{4}}\right)${\color{black}. With \eqref{Fw}  and \eqref{LuI-2}}, we multiply the first equation of \eqref{1-1} {\color{black}by} $u^{p-1}(p\geq 2)$ and integrate the result to obtain
\begin{equation}\label{LuI-3}
\begin{split}
	\frac{1}{p}\frac{d}{dt}\int_{\Omega}u^{p}%=&\int_{\Omega}u_{t}\cdot u^{k-1}\\
	=&-(p-1)\int_{\Omega}u^{p-2}\nabla u\cdot\nabla(\gamma(v)u) +\alpha\int_{\Omega}F(w)u^{p}-\theta\int_{\Omega}u^{p}\\
	\leq&-(p-1)\int_{\Omega}\gamma(v)u^{p-2}|\nabla u|^{2}-(p-1)\int_{\Omega}\gamma'(v)u^{p-1}\nabla u\cdot\nabla v+\alpha C_{F}\int_{\Omega}u^{p}\\
	\leq&-\gamma_{1}(p-1)\int_{\Omega}u^{p-2}|\nabla u|^{2} +\eta(p-1)\int_{\Omega}u^{p-1}|\nabla u||\nabla v|+\alpha C_{F}\int_{\Omega}u^{p}\\
	\leq&-\frac{\gamma_{1}(p-1)}{2}\int_{\Omega}u^{p-2}|\nabla u|^{2} +\frac{\eta^{2}}{2\gamma_{1}}(p-1)\int_{\Omega}u^{p}|\nabla v|^{2} +\alpha C_{F}(p-1)\int_{\Omega}u^{p}\\
	\leq&-\frac{\gamma_{1}(p-1)}{2}\int_{\Omega}u^{p-2}|\nabla u|^{2} +\mathcal{K}_{D}(p-1)\int_\Omega u^{p},
\end{split}
\end{equation}
where $\mathcal{K}_{D}$ is independent of $p$ and defined by
\begin{equation*}\label{KD}
\mathcal{K}_{D}:=\left(\frac{\eta^2c_2^2}{2\gamma_1}+C_{F}\right)(1+\alpha)^{6}\left(1+\frac{1}{D}\right)^{6}e^{6K_{5}(1+\alpha)^{6}(1+\frac{1}{D})^{4}}.
\end{equation*}
Then using the identity $\int_\Omega u^{p-2}|\nabla u|=\frac{4}{p^2}\int_\Omega|\nabla u^\frac{p}{2}|$, from \eqref{LuI-3} one has
\begin{equation}\label{LuI-4}
\begin{split}
	\frac{d}{dt}\int_\Omega u^p+p(p-1)\int_\Omega u^p\leq&-\frac{2\gamma_1(p-1)}{p}\int_\Omega |\nabla u^\frac{p}{2}|^2+(\mathcal{K}_{D}+1)p(p-1)\int_\Omega u^p.
\end{split}
\end{equation}
 Using the interpolation inequality and Young's inequality with $\varepsilon$, then for all $f\in W^{1,2}(\Omega)$, one has
 \begin{equation}\label{LuI-5}
 \|f\|_{L^2}^2\leq \varepsilon \|\nabla f\|_{L^2}^2+c_{3}(1+\varepsilon^{-1})\|f\|_{L^1}^2
 \end{equation}
 for any $\varepsilon>0$, where $c_{3}>0$ only depends on $\Omega$. Then letting $f=u^\frac{p}{2}$ and $\varepsilon=\frac{2\gamma_1}{p^2(\mathcal{K}_D+1)}$ in \eqref{LuI-5}, we can derive that
\begin{equation}\label{LuI-6}
\begin{split}
	(\mathcal{K}_{D}+1)p(p-1)\int_\Omega u^p\leq \frac{2\gamma_1(p-1)}{p}\int_\Omega |\nabla u^\frac{p}{2}|^2+ \widetilde{\mathcal{K}}_Dp(p-1)(1+p^2)\left(\int_\Omega u^\frac{p}{2}\right)^2{\color{black},}
\end{split}
\end{equation}
 where
\begin{equation*}
\begin{split}
	\widetilde{\mathcal{K}}_D=&\frac{c_{3}(1+2\gamma_{1})}{2\gamma_{1}}(\mathcal{K}_{D}+1)^{2}\\
	=&\frac{c_{3}(1+2\gamma_{1})}{2\gamma_{1}}\left(\frac{\eta^2c_2^2}{2\gamma_1}+C_{F}+1\right)^{2}(1+\alpha)^{12}\left(1+\frac{1}{D}\right)^{12}e^{12K_{5}(1+\alpha)^{6}(1+\frac{1}{D})^{4}}{\color{black}.}
%c_1\left(1+\frac{\mathcal{K}_D+1}{2\gamma_1}\right)(\mathcal{K}_D+1).
\end{split}
\end{equation*}
Substituting \eqref{LuI-6} into \eqref{LuI-4} and using the fact $1+p^2\leq(1+p)^2$, one has
 \begin{equation*}
 \frac{d}{dt}\int_\Omega u^p+p(p-1)\int_\Omega u^p\leq \widetilde{\mathcal{K}}_Dp(p-1)(1+p)^2\left(\int_\Omega u^{\frac{p}{2}}\right)^{2},
 \end{equation*}
 which gives
\begin{equation}\label{LuI-7}
\int_\Omega u^p(x,t)\leq \int_\Omega u_0^p(x)+ \widetilde{\mathcal{K}}_D(1+p)^2\sup\limits_{0\leq t\leq T_{max}}\left(\int_\Omega u^\frac{p}{2}(x,t)\right)^2.
\end{equation}
Then using the Moser iteration \cite{Alikakos-1979}( see also the similar argument as in \cite{T-JMAA2-2011,TW-M3AS-2013}), from \eqref{LuI-7} one has
\begin{equation*}
 \|u(\cdot,t)\|_{L^\infty}\leq 2^6\widetilde{\mathcal{K}}_D(1+|\Omega|)(1+\alpha)(\|u_0\|_{L^\infty}+\|w_0\|_{L^{\infty}}),
 \end{equation*}
which gives \eqref{LuI-1}.

\end{proof}

	\begin{proof}[Proof of Theorem $\ref{GB}$] For any fixed $D>0$ and $\alpha\geq0$, from Lemma \ref{LuI}, we can find a constant $C>0$ independent of $t$ such that
		\begin{equation*}
		\|u(\cdot,t)\|_{L^\infty}\leq C(1+\alpha)^{13}\left(1+\frac{1}{D}\right)^{12}e^{12K_{5}(1+\alpha)^{6}(1+\frac{1}{D})^{4}},
		\end{equation*}
	which combined with the local existence results in Lemma \ref{LS} {\color{black}proves} Theorem \ref{GB}.
	\end{proof}
	
	\section{Asymptotic behavior (Proof of Theorem $\ref{LTB}$)}
	In this section, we will derive the asymptotic behavior of solutions as shown in Theorem $\ref{LTB}$. %The methods for $\theta=0$ and $\theta>0$ will be different.
	Before embarking on these details, we first use the standard parabolic property to improve the regularity of $u$, $v$ and $w$ as follows.
	\begin{lemma}\label{A10*} Let $(u,v,w)$ be the nonnegative global classical solution of $\eqref{1-1}$ obtained in Theorem \ref{GB}. Then there exist $\sigma\in(0,1)$ and $C>0$ such that
		\begin{equation}\label{A10*-1}
		\|u(\cdot,t)\|_{C^{\sigma,\frac{\sigma}{2}}(\bar{\Omega}\times[t,t+1])}\leq C\quad\mathrm{for\ all}\quad t>1
		\end{equation}
		and
		\begin{equation}\label{A10*-2}
		\|v(\cdot,t)\|_{C^{2+\sigma,1+\frac{\sigma}{2}}(\bar{\Omega}\times[t,t+1])}+\|w\|_{C^{2+\sigma,1+\frac{\sigma}{2}}(\bar{\Omega}\times[t,t+1])}\leq C\quad\mathrm{for\ all}\quad t>1.
		\end{equation}
	\end{lemma}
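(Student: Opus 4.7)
The strategy is a standard parabolic bootstrap driven by the uniform $L^{\infty}$ bound on $u$ from Theorem~\ref{GB} together with the monotonicity of $\|w(\cdot,t)\|_{L^{\infty}}$ in Lemma~\ref{UW}; a uniform bound on $\|v(\cdot,t)\|_{L^{\infty}}$ follows from the variation-of-constants formula applied to the second equation with bounded forcing. All constants below depend only on these uniform bounds and on the fixed data $(\Omega,\gamma,F,\alpha,\theta,D,u_{0},v_{0},w_{0})$, and in particular not on the time shift $t>1$; localising by a smooth cutoff in time (to absorb the missing initial datum at time $t$) lets us apply parabolic regularity on each cylinder $\bar\Omega\times[t,t+1]$ uniformly in $t$.

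First I would run parabolic $L^{p}$ theory on the linear Neumann problems
\begin{equation*}
v_{t}=D\Delta v+u-v,\qquad w_{t}=\Delta w-uF(w),
\end{equation*}
whose right-hand sides are uniformly bounded. This produces $v,w\in W^{2,1}_{p}$ on each $\Omega\times[t,t+1]$ for every $p<\infty$ with norm independent of $t>1$, and Sobolev embedding upgrades this to $v,w\in C^{1+\sigma,(1+\sigma)/2}(\bar\Omega\times[t,t+1])$ uniformly in $t$ for some $\sigma\in(0,1)$. I then recast the first equation of \eqref{1-1} in divergence form,
\begin{equation*}
u_{t}=\nabla\cdot\bigl(\gamma(v)\nabla u+\gamma'(v)u\nabla v\bigr)+\bigl(\alpha F(w)-\theta\bigr)u,
\end{equation*}
which is uniformly parabolic by (H1) and has bounded coefficients thanks to the previous step. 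The De Giorgi-Nash-Moser H\"older estimate for divergence-form parabolic equations with bounded measurable coefficients under homogeneous Neumann data (cf.\ Ladyzhenskaya-Solonnikov-Ural'tseva) then yields \eqref{A10*-1} with H\"older exponent and norm independent of $t>1$.

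With $u\in C^{\sigma,\sigma/2}$ established, the forcings $u-v$ and $-uF(w)$ of the $v$- and $w$-equations become H\"older continuous on $\bar\Omega\times[t,t+1]$ uniformly in $t$. Parabolic Schauder theory under homogeneous Neumann boundary conditions then promotes $v$ and $w$ to $C^{2+\sigma,1+\sigma/2}$ with norms independent of $t$, giving \eqref{A10*-2}. The only non-routine step is the H\"older regularity of $u$: in non-divergence form the first equation carries the term $(\gamma''(v)|\nabla v|^{2}+\gamma'(v)\Delta v)u$, and at that stage $\Delta v$ is only controlled in $L^{p}$, ruling out Schauder estimates for $u$ directly; recasting in divergence form sidesteps this by demanding only bounded coefficients, which the De Giorgi-Nash-Moser machinery accepts.
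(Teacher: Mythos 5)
Your proposal is correct and follows essentially the same route as the paper: the paper also writes the first equation in divergence form $u_t=\nabla\cdot\bigl(\gamma(v)\nabla u+\gamma'(v)u\nabla v\bigr)+(\alpha F(w)-\theta)u$, verifies the structure conditions (uniform ellipticity from (H1), boundedness of the flux and source via the $L^\infty$ bounds on $u$, $\nabla v$, $w$), invokes a De Giorgi--Nash--Moser type H\"older estimate (Porzio--Vespri, \cite[Theorem 1.3]{RT}) to get \eqref{A10*-1}, and then upgrades $v,w$ by standard parabolic (Schauder) regularity to get \eqref{A10*-2}. The only cosmetic difference is that you derive the needed $W^{1,\infty}$ control on $v,w$ by a preliminary $L^p$/embedding step, whereas the paper takes it directly from the estimates already established in the proof of Theorem \ref{GB}.
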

{\color{black}
\begin{proof}
Let $A(x,t,u,\nabla u)=\gamma(v)\nabla u+\gamma'(v)u\nabla v$ and $B(x,t,u)=\alpha F(w)u-\theta u$. Then we can rewrite the first equation of \eqref{1-1}  as follows
\begin{equation*}
	u_{t}=\nabla\cdot A(x,t,u,\nabla u)+B(x,t,u).
\end{equation*}
Noting that Theorem \ref{GB} gives two positive constants $c_{1}$ and $c_{2}$ satisfying $\|u\|_{L^{\infty}}\leq c_{1}$ and $\|v\|_{W^{1,\infty}}+\|w\|_{W^{1,\infty}}\leq c_{2}$, we end up with
\begin{equation}\label{HA1}
\begin{split}
	A(x,t,u,\nabla u)\cdot\nabla u&=\left(\gamma(v)\nabla u+u\gamma'(v)\nabla v\right)\cdot\nabla u\\
	&\leq\gamma(v)|\nabla u|^{2}+\gamma'(v)u\nabla u\cdot\nabla v\\
	&\leq\frac{\gamma(v)}{2}|\nabla u|^{2}+\frac{(\gamma'(v))^{2}}{2\gamma(v)}u^{2}|\nabla v|^{2}\\
	&\leq\frac{\gamma_{2}}{2}|\nabla u|^{2}+\frac{c_{1}^{2}c_{2}^{2}\eta^{2}}{2\gamma_{1}}
\end{split}
\end{equation}
and
\begin{equation}\label{HA2}
\begin{split}
	|A(x,t,u,\nabla u)|&=|\gamma(v)\nabla u+\gamma'(v)u\nabla v|\\
	&\leq|\gamma(v)||\nabla u|+|\gamma'(v)|\|u\|_{L^{\infty}}\|\nabla v\|_{L^{\infty}}\\
	&\leq\gamma_{2}|\nabla u|+c_{1}c_{2}\eta.
\end{split}
\end{equation}
Moreover, since \eqref{Fw} guarantees $F(w)\leq C_{F}$ and hence
\begin{equation}\label{HA3}
\begin{split}
	|B(x,t,u)|&=|\alpha F(w)u-\theta u|\\
	&\leq\alpha|F(w)|\|u\|_{L^{\infty}}+\theta\|u\|_{L^{\infty}}\\
	&\leq c_{1}(\alpha C_{F}+\theta).
\end{split}
\end{equation}
With \eqref{HA1}--\eqref{HA3} in hand, we obtain \eqref{A10*-1} by applying \cite[Theorem 1.3]{RT}. Furthermore, the standard parabolic regularity combined with \eqref{A10*-1} infers \eqref{A10*-2} directly.
\end{proof}
}
%Next, we show that the $L^p$-convergence implies the $L^\infty$-convergence under some regularity estimate.
%\begin{lemma}
%\end{lemma}
	\subsection{Case of $\theta>0$}
	In this subsection, we are devoted to studying the large time behavior of solutions for the case $\theta>0$. Notice that $\int_0^\infty\int_{\Omega} u<\infty$ and the relative compactness of $(u(\cdot,t))_{t>1}$ in $C(\Omega)$ (see Lemma $\ref{A10*}$) indicate some decay information for $u$ and hence the decay properties of $v$ from the second equation of $\eqref{1-1}$. Precisely, we have the following results.
	\begin{lemma}\label{cu} Let the conditions in {\color{black}Theorem \ref{LTB}} hold, and suppose $\theta>0$ and $(u,v,w)$ is the solution of the system \eqref{1-1}. Then it follows that
		\begin{equation}\label{cu-1}
		\|u(\cdot,t)\|_{L^\infty}\to 0 \ \ \ as \ \ t\to\infty,
		\end{equation}
		and
		\begin{equation}\label{cv-1}
		\|v(\cdot,t)\|_{L^\infty}\to 0 \ \ \ as \ \ t\to\infty.
		\end{equation}
	\end{lemma}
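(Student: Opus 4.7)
\textbf{Proof proposal for Lemma \ref{cu}.}

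The plan is to first establish an integrability statement $\int_0^\infty\|u(\cdot,s)\|_{L^1}\,ds<\infty$ from the mass-like identity \eqref{L1u}, then promote this to pointwise decay of $u$ in $L^\infty$ by combining it with the uniform Hölder regularity provided by Lemma \ref{A10*}, and finally deduce the decay of $v$ through the Duhamel representation of the second equation of \eqref{1-1}.

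\emph{Step 1: $L^1$-decay of $u$.} Since $\theta>0$, identity \eqref{L1u} rewritten as
\begin{equation*}
\theta\int_0^t\|u(\cdot,s)\|_{L^1}\,ds\leq \|u_0\|_{L^1}+\alpha\|w_0\|_{L^1}
\end{equation*}
immediately yields $\int_0^\infty\|u(\cdot,s)\|_{L^1}\,ds<\infty$. By \eqref{A10*-1}, $u$ is $\sigma/2$-Hölder continuous in $t$ uniformly on $\bar\Omega$ for $t>1$, so integrating in $x$ shows that $t\mapsto\|u(\cdot,t)\|_{L^1}$ is uniformly Hölder continuous on $[1,\infty)$. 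A standard argument (a uniformly continuous nonnegative function whose integral on $[0,\infty)$ converges must tend to $0$) then gives $\|u(\cdot,t)\|_{L^1}\to 0$ as $t\to\infty$.

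\emph{Step 2: $L^\infty$-decay of $u$.} I argue by contradiction. If \eqref{cu-1} fails, there exist $\varepsilon_0>0$ and a sequence $t_n\to\infty$ with $\|u(\cdot,t_n)\|_{L^\infty}\geq\varepsilon_0$. Estimate \eqref{A10*-1} gives that $\{u(\cdot,t_n)\}$ is equicontinuous and uniformly bounded on $\bar\Omega$, so Arzelà–Ascoli provides a subsequence converging uniformly to some $u^\ast\in C(\bar\Omega)$ with $\|u^\ast\|_{L^\infty}\geq\varepsilon_0$. Since $u^\ast$ is nonnegative and continuous, $\|u^\ast\|_{L^1}>0$, which together with uniform convergence forces $\|u(\cdot,t_n)\|_{L^1}\to\|u^\ast\|_{L^1}>0$, contradicting Step 1. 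Hence \eqref{cu-1} holds.

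\emph{Step 3: $L^\infty$-decay of $v$.} Rewrite the second equation of \eqref{1-1} as $v_t+v=D\Delta v+u$, so the Neumann-heat-semigroup representation gives
\begin{equation*}
v(\cdot,t)=e^{-t}e^{Dt\Delta}v_0+\int_0^t e^{-(t-s)}e^{D(t-s)\Delta}u(\cdot,s)\,ds.
\end{equation*}
The first term is bounded by $e^{-t}\|v_0\|_{L^\infty}\to 0$. For the second, fix $\varepsilon>0$ and use Step 2 to choose $T_\varepsilon>1$ such that $\|u(\cdot,s)\|_{L^\infty}\leq\varepsilon$ for $s\geq T_\varepsilon$. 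The contractivity of $e^{\tau\Delta}$ on $L^\infty$ then gives
\begin{equation*}
\Bigl\|\int_{T_\varepsilon}^t e^{-(t-s)}e^{D(t-s)\Delta}u(\cdot,s)\,ds\Bigr\|_{L^\infty}\leq \varepsilon\int_0^\infty e^{-r}\,dr=\varepsilon,
\end{equation*}
while the contribution from $[0,T_\varepsilon]$ is bounded by $M e^{-(t-T_\varepsilon)}\to 0$, using the uniform bound $M$ from Theorem \ref{GB}. Taking $t\to\infty$ and then $\varepsilon\to 0$ yields \eqref{cv-1}.

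The only nonroutine point I expect is Step 2, where one has to combine the vanishing $L^1$-norm with equicontinuity; everything else is straightforward once the regularity in Lemma \ref{A10*} is in hand.
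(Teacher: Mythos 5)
Your proposal is correct and follows the same overall architecture as the paper's proof (mass identity $\Rightarrow$ $L^1$-decay $\Rightarrow$ compactness from Lemma \ref{A10*} $\Rightarrow$ $L^\infty$-decay of $u$ $\Rightarrow$ decay of $v$); Step 2 is essentially identical to the paper's Arzel\`a--Ascoli contradiction argument. The two sub-steps you execute differently are Steps 1 and 3. For the $L^1$-decay, the paper sets $A(t)=\int_\Omega u$ and shows both $\int_0^\infty|A(t)|\,dt<\infty$ (from \eqref{L1u}) and $\int_0^\infty|A'(t)|\,dt<\infty$ (by integrating the first equation of the system and using $|\alpha F(w)-\theta|\leq c$), concluding $A(t)\to0$; you instead invoke the uniform H\"older continuity of \eqref{A10*-1} to make $t\mapsto\|u(\cdot,t)\|_{L^1}$ uniformly continuous and apply a Barbalat-type lemma. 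Both are sound; the paper's version has the small advantage of not needing the parabolic regularity at this stage, while yours reuses machinery already established for Step 2. For the decay of $v$, the paper compares $v$ with the spatially homogeneous super-solution $v^*$ of $v^*_t+v^*=\|u(\cdot,t)\|_{L^\infty}$, whereas you use the Duhamel representation together with the $L^\infty$-contractivity of the Neumann heat semigroup and a split of the time integral at $T_\varepsilon$; these are two standard ways of expressing the same mechanism, and your version is complete provided you note that $\|e^{\tau\Delta}z\|_{L^\infty}\leq\|z\|_{L^\infty}$ follows from the maximum principle. No gaps.
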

	\begin{proof}
First, we claim that
		\begin{equation}\label{cu-4}
		u(\cdot,t)\to 0 \ \ \mathrm{in} \ \ L^1(\Omega) \ \ \mathrm{as} \ \ t\to\infty.
		\end{equation}
	Indeed, defining $A(t):=\int_{\Omega} u>0$, we have $\int_0^\infty |A(t)|=\int_0^\infty\int_{\Omega} u<\infty$ from $\eqref{L1u}$. Furthermore, from the first equation of \eqref{1-1} and the fact $ w\in L^\infty(\Omega)$ (see Lemma \ref{UW}), we can  derive that
\begin{equation*}
	\int_0^\infty|A'(t)|=\int_0^\infty\Big|\int_{\Omega} (\alpha F(w)-\theta) u\Big|\leq c_1\int_0^\infty\int_{\Omega} u<\infty,
\end{equation*}
	 which together with the fact $\int_0^\infty |A(t)|=\int_0^\infty\int_{\Omega} u<\infty$ gives $A(t)\to 0$ as $t\to\infty$. This verifies the claim \eqref{cu-4}.

 With \eqref{cu-4} in hand, we shall show \eqref{cu-1} holds. In fact, if $\eqref{cu-1}$ is false, we can find a constant {\color{black}$c_{2}>0$} and a time sequence $(t_k)_{k\in\mathbb{N}}\subset(1,\infty)$ satisfying $t_k\to\infty$ as $k\to \infty$ such that
		\begin{equation}\label{cu-2}
		\|u(\cdot,t_k)\|_{L^\infty}\geq {\color{black}c_{2}} ~~\mathrm{for~~ all} ~k\in\mathbb{N}.
		\end{equation}
		On the other hand, using \eqref{A10*-1} in Lemma $\ref{A10*}$ and the Arzel\`{a}-Ascoli theorem, we know that $(u(\cdot,t))_{t>1}$ is relatively compact in $C(\Omega)$. Hence, we can extract a subsequence, still denoted by $(t_k)_{k\in\mathbb{N}}\subset(1,\infty)$, such that
		\begin{equation*}\label{cu-3}
		u(\cdot,t_k)\to u_{\infty} \ \ \mathrm{in} \ \ L^\infty(\Omega) \ \ \mathrm{as} \ \ k\to\infty,
		\end{equation*}
	which combined with \eqref{cu-4} implies $u_\infty\equiv0$. This however contradicts $\eqref{cu-2}$ and hence $\eqref{cu-1}$ is proved.

	Next, we show \eqref{cv-1} holds. To this end, we consider the following system
		\begin{equation}\label{cv-2}
		\begin{cases}
		v_t+v=D\Delta v+u,\ \ &x\in\Omega,t>0,\\
		\frac{\partial v}{\partial \nu}=0,\ \ &x\in\partial\Omega,t>0,\\
		v(x,0)=v_0(x),&x\in\Omega.
		\end{cases}
		\end{equation}
		Let $v^*(t)$ be solutions of the ODE problem
		\begin{equation}\label{k2-7}
		\begin{cases}
		v_t^*(t)+v^*(t)=\|u(\cdot,t)\|_{L^\infty},\quad t>0,\\
		v^*(0)=\|v_0\|_{L^\infty}.
		\end{cases}
		\end{equation}
		{\color{black}By the comparison principle, we know that $v^*(t)$ is a super-solution of \eqref{cv-2} satisfying $v(x,t)\leq v^*(t)$ for all $x\in\Omega$, $t>0$.}
		%\begin{equation*}\label{k2-8}
		%v(x,t)\leq v^*(t) \ \ \mathrm{for\ all} \ \ x\in\Omega,\ t>0.
		%\end{equation*}
		Similarly, we can prove that $v(x,t)\geq -v^*(t)$ for all $x\in\Omega,\ t>0$. Hence, one has
		\begin{equation}\label{k2-9}
		|v(x,t)|\leq v^*(t) \ \ \mathrm{for\ all} \ \ x\in\Omega,\ t>0.
		\end{equation}
		On the other hand, from \eqref{k2-7} and using the fact $\|u(\cdot,t)\|_{L^\infty}\to 0$ as $t\to\infty$ we have
		\begin{equation*}\label{k2-10}
		v^*(t)\to 0 \ \ \mathrm{as}\ \ t\to\infty,
		\end{equation*}
		which combined with \eqref{k2-9} gives
		\begin{equation*}\label{k2-11}
		\begin{split}
		\|v(\cdot,t)\|_{L^\infty}\leq v^*(t) \to 0 \ \ \mathrm{as}\ \ t\to\infty.
		\end{split}
		\end{equation*}
		This yields \eqref{cv-1} and completes the proof of Lemma \ref{cu}.
	\end{proof}
	
	\begin{lemma}\label{Lw*} Suppose the conditions in {\color{black}Lemma \ref{cu}} hold.
Let $(u,v,w)$ be the solution of the system \eqref{1-1}. Then we have the following {\color{black}result}
		\begin{equation}\label{cw-1}
		\|w(\cdot,t)-w_*\|_{L^\infty} \to 0 \ \ \ as \ \ t\to\infty,
		\end{equation}
		where $w_{*}> 0$ is a constant determined by {\color{black} $w_*=\frac{1}{|\Omega|}\|w_0\|_{L^1}-\frac{1}{|\Omega|}\int_0^\infty\int_\Omega uF(w)$.}
%{\color{black}\begin{equation}\label{cw-1*}
%\begin{cases}
%w_*=\frac{1}{\alpha|\Omega|}(\|u_0\|_{L^1}+\alpha \|w_0\|_{L^1}-\theta \int_0^\infty \|u(s)\|_{L^1}ds),& \mathrm{if}\ \ \alpha>0,\\
%w_*=\frac{1}{|\Omega|}\|w_0\|_{L^1}-\frac{1}{|\Omega|}\int_0^\infty\int_\Omega uF(w)dxds, &\mathrm{if}\ \ \alpha=0.
%\end{cases}
%\end{equation}}
	\end{lemma}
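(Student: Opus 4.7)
The plan is to identify $w_*$ as the limit of the spatial averages $\bar{w}(t) := \frac{1}{|\Omega|}\int_\Omega w(\cdot,t)$, verify that this limit is strictly positive via a logarithmic entropy estimate, and upgrade the convergence of the means to $L^\infty$ convergence via a Duhamel representation for $w - \bar{w}$. Since $\theta > 0$, the mass identity \eqref{L1u} yields $\int_0^\infty \|u(\cdot,t)\|_{L^1}\,dt \leq (\|u_0\|_{L^1} + \alpha \|w_0\|_{L^1})/\theta < \infty$; together with $F(w) \leq C_F$ from \eqref{Fw}, this gives $\int_0^\infty \int_\Omega uF(w) < \infty$. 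Integrating the third equation of \eqref{1-1} against the Neumann condition produces $\frac{d}{dt}\int_\Omega w = -\int_\Omega uF(w)$, so $\bar{w}(t) \to w_*$ with the formula asserted.

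To show $w_* > 0$, I will build a lower bound on $\int_\Omega \ln w$. The positivity $w > 0$ in $\bar{\Omega} \times (0,T_{\max})$ is already recorded in Lemma \ref{LS}, so $\ln w$ is well defined and integrable. Using the Neumann boundary condition and the pointwise identity $\Delta w / w = \Delta \ln w + |\nabla w|^2 / w^2$, one finds
\begin{equation*}
\frac{d}{dt}\int_\Omega \ln w = \int_\Omega \frac{|\nabla w|^2}{w^2} - \int_\Omega \frac{uF(w)}{w} \geq -L \int_\Omega u,
\end{equation*}
with $L := \max_{[0,\|w_0\|_{L^\infty}]} F'$, since $F(w)/w = \int_0^1 F'(\sigma w)\,d\sigma \leq L$ by (H2). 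Integrating from $t = 1$ and invoking $\int_1^\infty \int_\Omega u < \infty$ produces a uniform bound $\int_\Omega \ln w(\cdot,t) \geq -c_4$ for all $t \geq 1$. Jensen's inequality then delivers $\ln \bar{w}(t) \geq -c_4/|\Omega|$, so $\bar{w}(t) \geq e^{-c_4/|\Omega|}$ for $t \geq 1$, and passing to the limit gives $w_* \geq e^{-c_4/|\Omega|} > 0$.

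For the $L^\infty$ upgrade, set $\tilde{w} := w - \bar{w}$ and $h := uF(w) - \overline{uF(w)}$, both of zero spatial mean. Then $\tilde{w}_t = \Delta \tilde{w} - h$, and Duhamel's formula gives $\tilde{w}(\cdot,t) = e^{t\Delta}(w_0 - \bar{w}_0) - \int_0^t e^{(t-s)\Delta} h(\cdot,s)\,ds$. The first term tends to zero in $L^\infty$ by Lemma \ref{SE}(i) with $q = 2$, $p = \infty$, $n = 2$. For the integral, $\|h(\cdot,s)\|_{L^2} \leq 2 C_F |\Omega|^{1/2} \|u(\cdot,s)\|_{L^\infty} \to 0$ by Lemma \ref{cu}. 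Splitting $\int_0^t = \int_0^{t/2} + \int_{t/2}^t$ and applying $\|e^{(t-s)\Delta} h\|_{L^\infty} \leq k_1(1 + (t-s)^{-1/2}) e^{-\lambda_1(t-s)} \|h\|_{L^2}$, the contribution on $[0, t/2]$ is absorbed by $e^{-\lambda_1 t/2}$ while that on $[t/2, t]$ is controlled by $\sup_{s \geq t/2} \|u(\cdot,s)\|_{L^\infty}$; both vanish as $t \to \infty$. Combining $\bar{w}(t) \to w_*$ with $\|\tilde{w}(\cdot,t)\|_{L^\infty} \to 0$ yields \eqref{cw-1}.

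The main obstacle is the strict positivity $w_* > 0$, which is not automatic from the mass-type identity of the first step: one must set up the logarithmic entropy so that the Neumann condition removes $\int_\Omega \Delta \ln w$, the dissipation term $\int_\Omega |\nabla w|^2/w^2$ is discarded as nonnegative, and the $C^1$ behavior of $F$ at $0$ controls $F(w)/w$ uniformly, after which Jensen's inequality transfers the bound from $\int_\Omega \ln w$ to $\bar{w}$. The $L^\infty$ convergence in the last step is comparatively routine, given the decay of $u$ already supplied by Lemma \ref{cu}.
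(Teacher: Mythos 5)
Your proof is correct and follows essentially the same strategy as the paper: the decomposition $w-w_*=(w-\bar w)+(\bar w-w_*)$, the mass identity for $\bar w(t)\to w_*$, a Duhamel/semigroup estimate for $\|w-\bar w\|_{L^\infty}\to 0$, and a logarithmic entropy bound $\int_\Omega\ln w\geq -c_4$ for the strict positivity of $w_*$. The only genuine difference is in how you extract $w_*>0$ from the entropy bound: you apply Jensen's inequality to pass directly from $\int_\Omega\ln w\geq -c_4$ to $\bar w(t)\geq e^{-c_4/|\Omega|}$, whereas the paper first constructs an explicit pointwise sub-solution $\tilde w$ (via a comparison argument with $\tilde w_t-\Delta\tilde w=-\mathcal{K}\tilde w$) to control $\int_\Omega\ln w(\cdot,1)$ quantitatively and then concludes $w_*>0$ by combining the entropy bound with the already-established $L^\infty$ convergence. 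Your Jensen route is slightly more streamlined, since it neither needs the sub-solution comparison nor the $L^\infty$ convergence as an input to the positivity step; the paper's version buys an explicit lower bound for $w$ at $t=1$ in terms of $\Gamma_0\int_\Omega w_0$, but for the qualitative claim your observation that $w(\cdot,1)$ is continuous and strictly positive on $\bar\Omega$ suffices.
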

	\begin{proof} Let $\bar{w}(t)=\frac{1}{|\Omega|}\int_{\Omega} w=\frac{1}{|\Omega|} \|w\|_{L^1}$, then the third equation of \eqref{1-1} can be rewritten as
		\begin{equation}\label{cw-2}
		\left(w-\bar{w}\right)_t=\Delta(w-\bar{w})-uF(w)+\overline{uF(w)}.
		\end{equation}
Then applying the variation-of-constants formula to $\eqref{cw-2}$, we get
		\begin{equation*}	w(\cdot,t)-\bar{w}(t)=e^{\frac{t}{2}\Delta}\big(w(\cdot,t/2)-\bar{w}(t/2)\big)-\int_{\frac{t}{2}}^te^{(t-s)\Delta}\left(uF(w)-\overline{uF(w)}\right)ds,
		\end{equation*}
		which, together with the fact $\|w(\cdot,t)\|_{L^\infty}\leq c_1$ and \eqref{Lp-1}, gives
		\begin{equation}\label{cw-3}
		\begin{split}
		&\|w(\cdot,t)-\bar{w}(t)\|_{L^\infty}\\
		&\leq\|e^{\frac{t}{2}\Delta}\big(w(\cdot,t/2))-\bar{w}(t/2)\big)\|_{L^\infty}+\int_{\frac{t}{2}}^t\|e^{(t-s)\Delta}(uF(w)-\overline{uF(w)})\|_{L^\infty}ds\\
		&\leq k_{1}e^{-\frac{\lambda_{1}t}{2}}\|w(\cdot,t/2)-\bar{w}(t/2)\|_{L^\infty}+{\color{black}k_{1}C_{F}}\int_{\frac{t}{2}}^t e^{-(t-s)\lambda_1}\|u(\cdot,s)\|_{L^\infty}ds\\
		&\leq 2k_{1}c_{1}e^{-\frac{\lambda_{1}t}{2}}+\frac{{\color{black}k_{1}C_{F}}}{\lambda_{1}}\sup_{\frac{t}{2}\leq s\leq t}\|u(\cdot,s)\|_{L^\infty}.
		\end{split}
		\end{equation}
{\color{black}Then using the decay property of $u$ in \eqref{cu-1}, from \eqref{cw-3} one has
		\begin{equation}\label{cw-4}
		\lim\limits_{t\to\infty}\|w(\cdot,t)-\bar{w}(t)\|_{L^\infty} =0.
		\end{equation}
Next we define a number $w_*$ by
\begin{equation}\label{ws}	w_{*}=\frac{1}{|\Omega|}\|w_0\|_{L^1}-\frac{1}{|\Omega|}\int_0^\infty\int_\Omega uF(w).
\end{equation}
	Then integrating the third equation of \eqref{1-1} over $\Omega \times(0,t)$, we see that
		\begin{equation*}\label{vbar}
		\bar{w}(t)=w_*+\frac{1}{|\Omega|}\int_t^\infty\int_\Omega uF(w),
		\end{equation*}
		which implies
		\begin{equation}\label{cw-5}
		\begin{split}
		\|\bar{w}(t)-w_*\|_{L^\infty}\leq& \frac{C_F}{|\Omega|} \int_t^{\infty} \|u(\cdot,s)\|_{L^1}ds \to 0 \ \ \mathrm{as}\ \ \ t\to\infty.
		\end{split}
		\end{equation}
Then combining \eqref{cw-4} and \eqref{cw-5}, one has}
\begin{equation*}
\begin{split}
\|w(\cdot,t)-w_*\|_{L^\infty}
&\leq \|w(\cdot,t)-\bar{w}(t)\|_{L^\infty}+\|\bar{w}(t)-w_*\|_{L^\infty}\to 0,\ \mathrm{as}\ \ t\to\infty,
\end{split}
\end{equation*}
which yields \eqref{cw-1}.

Next, we shall show $w_*>0$. Noting $F(w)\in C^1([0,\infty))$ and $F(0)=0$ and using the boundedness of $u$ and $w$, we can find $\xi\in (0,w)$ and $\mathcal{K}>0$ such that
\begin{equation*}\label{wp}
\frac{uF(w)}{w}=\frac{F(w)-F(0)}{w}\cdot u=F'(\xi)u\leq {\color{black}\|F'(\xi)\|_{L^{\infty}}}\|u\|_{L^\infty}:=\mathcal{K}.
\end{equation*}
Let $\tilde{w}(x,t)$ be the solution of the following system
\begin{equation*}\label{wp-1}
\begin{cases}
	\tilde{w}_t-\Delta \tilde{w}=-\mathcal{K}\tilde{w},\ \ &x\in\Omega,t>0,\\
		\frac{\partial \tilde{w}}{\partial\nu}=0,\ \ &x\in\partial\Omega,t>0,\\
		\tilde{w}(x,0)=w_0(x),&x\in\Omega.
		\end{cases}
		\end{equation*}
		Clearly, $\tilde{w}(x,t)$ is a sub-solution of $w(x,t)$ by the comparison principle, and hence
		\begin{equation}\label{wp-2}
		w(x,t)\geq \tilde{w}(x,t).
		\end{equation}
		On the other hand, using \cite[Lemma 3.1]{HPW-M3AS-2013}, we can find a constant $\Gamma_0>0$ such that for all $t\geq 1$
		\begin{equation*}
		\tilde{w}(x,t)=e^{-\mathcal{K} t}e^{\Delta t} w_0\geq e^{-\mathcal{K} t}\Gamma_0\int_\Omega w_0,
		\end{equation*}
		which combined with \eqref{wp-2} gives
		\begin{equation}\label{wp-3}
		w(x,t)\geq e^{-\mathcal{K} t}\Gamma_0\int_\Omega w_0, \ \mathrm{for \ all}\ \ t\geq 1.
		\end{equation}
		Multiplying the third equation of \eqref{1-1} by $\frac{1}{w}$, and integrating by parts with respect to $x\in\Omega$, one has
		\begin{equation*}
		\frac{d}{dt}\int_\Omega \ln w(x,t)=\int_\Omega \frac{|\nabla w|^2}{w^2}-\int_\Omega \frac{F(w)}{w}u{\color{black}\geq-\int_{\Omega}\frac{F(w)}{w}u},
		\end{equation*}
	which thus gives
	\begin{equation}\label{wc-1}
		\int_\Omega \ln w(x,t)\geq \int_\Omega \ln w(x,1)-\int_1^t\int_\Omega \frac{F(w)}{w}u.
	\end{equation}
		%On the other hand, noting $F(w)\in C^1([0,\infty))$ and $F(0)=0$ and using the boundedness of $w$, one has
		%\begin{equation}\label{wc-2}
		%\int_\Omega \frac{F(w)}{w}u=\int_\Omega \frac{F(w)-F(0)}{w-0} u\leq |F'(\xi)|\int_\Omega u\leq c_1 \int_\Omega u
		%\end{equation}
		Then using \eqref{wp-3} and the fact $\int_0^t\int_\Omega u \leq c_7$, from \eqref{wc-1} we can find a constant $c_8>0$ such that
		\begin{equation*}
		\int_\Omega \ln w(x,t)\geq -c_8, \ \mathrm{for\ all}\ t\geq 1,
		\end{equation*}
		which combined with the fact \eqref{cw-1} implies $w_*>0$.
	\end{proof}
In summary, we have the asymptotic behavior of solutions for the  system \eqref{1-1} with $\theta>0$.
\begin{proposition}\label{LT1}
Let the conditions of Theorem \ref{LTB} hold and $\theta>0$, the solution of  system \eqref{1-1} satisfies
%. Then the following asymptotic behavior of solutions holds
\begin{equation*}
\lim\limits_{t\to\infty}(\|u(\cdot,t)\|_{L^\infty}+\|v(\cdot,t)\|_{L^\infty}+\|w(\cdot,t)-w_*\|_{L^\infty})=0,
\end{equation*}
where $w_{*}> 0$ defined by \eqref{ws}.
\end{proposition}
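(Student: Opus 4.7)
The plan is to assemble this proposition directly from Lemmas \ref{cu} and \ref{Lw*}, since the statement is merely a packaging of their three conclusions into a single display. No new analysis is needed; I would just verify that the hypotheses of both lemmas coincide with the hypotheses of Theorem \ref{LTB} restricted to $\theta>0$, and then combine the three convergences by standard limit arithmetic.

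More explicitly, under the assumptions of Theorem \ref{LTB} together with $\theta>0$, Lemma \ref{cu} yields $\|u(\cdot,t)\|_{L^\infty}\to0$ and $\|v(\cdot,t)\|_{L^\infty}\to0$ as $t\to\infty$. The $u$-decay there is obtained by first extracting $\int_0^\infty\!\int_\Omega u<\infty$ from the mass identity \eqref{L1u} (this is where $\theta>0$ enters), then upgrading $L^1$-decay to $L^\infty$-decay through the uniform H\"older estimate \eqref{A10*-1} and an Arzel\`a--Ascoli contradiction argument, and the $v$-decay then comes from a super/sub-solution comparison with the ODE $v^*_t+v^*=\|u(\cdot,t)\|_{L^\infty}$.

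Next, Lemma \ref{Lw*} provides $\|w(\cdot,t)-w_*\|_{L^\infty}\to0$ together with $w_*>0$, where $w_*$ is defined by \eqref{ws}. The convergence is obtained via the splitting $\|w-w_*\|_{L^\infty}\le\|w-\bar w\|_{L^\infty}+|\bar w-w_*|$: the first piece is controlled through Duhamel's formula applied to the zero-mean equation \eqref{cw-2} together with the heat semigroup decay in \eqref{Lp-1} and the already established decay of $u$, while the second piece is handled by tail-integrability of $\int_t^\infty\!\int_\Omega uF(w)$. The strict positivity $w_*>0$ follows from the pointwise lower bound $w(x,t)\ge e^{-\mathcal{K}t}\Gamma_0\int_\Omega w_0$ (via comparison with $\tilde w_t=\Delta\tilde w-\mathcal{K}\tilde w$) combined with the differential inequality for $\int_\Omega\ln w$.

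Summing the three limits $\|u(\cdot,t)\|_{L^\infty}+\|v(\cdot,t)\|_{L^\infty}+\|w(\cdot,t)-w_*\|_{L^\infty}\to 0$ then gives the proposition. I do not expect any genuine obstacle here: every technical difficulty has already been resolved inside the two lemmas (chiefly, the $L^1\Rightarrow L^\infty$ upgrade for $u$, and the positivity of $w_*$), so the proof of Proposition \ref{LT1} collapses to a one-line appeal to those two results.
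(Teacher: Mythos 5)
Your proposal is correct and matches the paper exactly: Proposition \ref{LT1} is stated there as a direct summary of Lemma \ref{cu} (decay of $u$ and $v$) and Lemma \ref{Lw*} (convergence of $w$ to $w_*>0$), with no additional argument. Your accurate recap of the internal mechanisms of those two lemmas is consistent with the paper's proofs as well.
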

	
\subsection{Case of $\theta=0$}
	In this subsection, we shall study the large time behavior of the system \eqref{1-1} with $\theta=0$. We first show the decay of $w$ based on some ideas in \cite{Winkler-2014-ARMA}.
\begin{lemma}\label{Ac-1}
Assume the conditions in {Theorem \ref{LTB}} hold. Let $(u,v,w)$ be the solution of the system \eqref{1-1} with $\theta=0$. Then we have
\begin{equation}\label{Ac-2}
\int_0^\infty\int_\Omega uF(w)<\infty
\end{equation}
and
\begin{equation}\label{Ac-3}
\int_0^\infty \int_\Omega|\nabla w|^2<\infty.
\end{equation}
\end{lemma}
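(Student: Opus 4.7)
Both bounds follow directly from testing the third equation of \eqref{1-1} (with $\theta=0$, though $\theta$ plays no role here) against elementary test functions and exploiting the non-negativity of $u$, $F(w)$, and $w$ guaranteed by Lemma~\ref{LS} and (H2).

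For \eqref{Ac-2}, the plan is to integrate the $w$-equation over $\Omega$. Using the homogeneous Neumann boundary condition, the diffusion term vanishes, leaving
\begin{equation*}
\frac{d}{dt}\int_\Omega w(\cdot,t) = -\int_\Omega u F(w).
\end{equation*}
Integrating in time from $0$ to any $T>0$ gives
\begin{equation*}
\int_0^T\int_\Omega u F(w) = \int_\Omega w_0 - \int_\Omega w(\cdot,T) \leq \|w_0\|_{L^1},
\end{equation*}
since $w\geq 0$. Letting $T\to\infty$ yields \eqref{Ac-2}.

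For \eqref{Ac-3}, I would multiply the third equation of \eqref{1-1} by $w$ and integrate by parts (again the boundary term disappears thanks to the Neumann condition) to obtain
\begin{equation*}
\frac{1}{2}\frac{d}{dt}\int_\Omega w^2 + \int_\Omega|\nabla w|^2 = -\int_\Omega uF(w)w \leq 0,
\end{equation*}
where the inequality uses $u,F(w),w\geq 0$. Integrating from $0$ to $T$ then gives
\begin{equation*}
\int_0^T\int_\Omega|\nabla w|^2 \leq \frac{1}{2}\int_\Omega w_0^2 \leq \frac{|\Omega|}{2}\|w_0\|_{L^\infty}^2,
\end{equation*}
and letting $T\to\infty$ produces \eqref{Ac-3}.

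There is no real obstacle here: the argument only relies on the conservation/dissipation structure of the $w$-equation together with the sign properties already recorded in Lemma~\ref{UW}. The only point worth noting is that the estimates are dimension- and $D$-independent; in particular, no information about $u$ beyond its non-negativity is needed at this stage, which is important because these two bounds are themselves the inputs to the subsequent decay argument for $w$ in the $\theta=0$ case.
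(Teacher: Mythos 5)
Your proposal is correct and follows exactly the paper's own argument: integrating the third equation over $\Omega$ to control $\int_0^\infty\int_\Omega uF(w)$ by $\|w_0\|_{L^1}$, and testing against $w$ to bound $\int_0^\infty\int_\Omega|\nabla w|^2$ by $\tfrac{1}{2}\int_\Omega w_0^2$. No differences worth noting.
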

\begin{proof} Integrating the third equation of $\eqref{1-1}$ over $\Omega$ and using the homogeneous Neumann boundary condition, one has
\begin{equation*}
\frac{d}{dt}\int_\Omega w+\int_\Omega uF(w)=0,
\end{equation*}
which gives
\begin{equation}\label{Ac-4}
\int_0^t\int_\Omega uF(w)\leq \int_\Omega w_0,\ \ \ \ ~~\mathrm{for~~all}~~t>0,
\end{equation}
and $\eqref{Ac-2}$ is a direct result of $\eqref{Ac-4}$. We multiply the third equation of $\eqref{1-1}$ by $w$ to obtain
\begin{equation}\label{Ac-5}
\frac{1}{2}\frac{d}{dt}\int_\Omega w^2=-\int_\Omega|\nabla w|^2-\int_\Omega uwF(w).
\end{equation}
Integrating $\eqref{Ac-5}$ with respect to $t$ and using the nonnegativity of $u$ and $w$, one can derive
\begin{equation*}
\int_0^t\int_\Omega|\nabla w|^2\leq\frac{1}{2}\int_\Omega w_0^2,
\end{equation*}
which gives $\eqref{Ac-3}$.
\end{proof}

\begin{lemma}\label{Ac*} Let the conditions in Lemma \ref{Ac-1} hold. Then there exists a time sequence $(t_k)_{k\in\mathbb{N}}\subset(0,\infty)$ satisfying $t_k\to\infty$ as $k\to\infty$ such that
\begin{equation}\label{Ac-6}
\int_{t_k}^{t_k+1}\int_\Omega w\to 0~~~~\mathrm{as}~~~k\to \infty.
\end{equation}
\end{lemma}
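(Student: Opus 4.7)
My plan is to combine the two integrability facts from Lemma~\ref{Ac-1}, namely $\int_0^\infty\int_\Omega uF(w)<\infty$ and $\int_0^\infty\int_\Omega|\nabla w|^2<\infty$, with a positive lower bound on the total mass of $u$. Since $\theta=0$, the identity $\frac{d}{dt}\int_\Omega w=-\int_\Omega uF(w)\leq 0$ yields $\|w(\cdot,t)\|_{L^1}\leq\|w_0\|_{L^1}$, so the mass balance \eqref{L1u} gives the key floor
\begin{equation*}
\int_\Omega u(\cdot,s)\,dx=\|u_0\|_{L^1}+\alpha\bigl(\|w_0\|_{L^1}-\|w(\cdot,s)\|_{L^1}\bigr)\geq\|u_0\|_{L^1}>0,\qquad s>0.
\end{equation*}

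I would then take $t_k=k\in\mathbb{N}$; convergence of the series $\sum_k\int_k^{k+1}\int_\Omega uF(w)$ and $\sum_k\int_k^{k+1}\int_\Omega|\nabla w|^2$ immediately forces
\begin{equation*}
\int_{t_k}^{t_k+1}\int_\Omega uF(w)\longrightarrow 0\quad\text{and}\quad\int_{t_k}^{t_k+1}\int_\Omega|\nabla w|^2\longrightarrow 0.
\end{equation*}
Because $F\in C^1([0,\infty))$ with $F(0)=0$ and $F'(0)>0$ by (H2), the continuous map $\sigma\mapsto F(\sigma)/\sigma$ (extended by $F'(0)$ at zero) admits a positive minimum $c_F>0$ on $[0,\|w_0\|_{L^\infty}]$. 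Combined with $0\leq w\leq\|w_0\|_{L^\infty}$ from \eqref{Loow}, this gives $uF(w)\geq c_F uw$ and therefore
\begin{equation*}
\int_{t_k}^{t_k+1}\int_\Omega uw\longrightarrow 0.
\end{equation*}

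To separate the factors, set $\bar w(s):=|\Omega|^{-1}\int_\Omega w(\cdot,s)$ and split
\begin{equation*}
\int_\Omega u(\cdot,s)w(\cdot,s)\,dx=\bar w(s)\int_\Omega u(\cdot,s)\,dx+\int_\Omega u(\cdot,s)\bigl(w(\cdot,s)-\bar w(s)\bigr)dx.
\end{equation*}
By Poincar\'e's inequality $\|w-\bar w\|_{L^2}\leq C_P\|\nabla w\|_{L^2}$, the bound $\|u\|_{L^\infty}\leq M$ from Theorem~\ref{GB}, and Cauchy--Schwarz in $s$,
\begin{equation*}
\int_{t_k}^{t_k+1}\Big|\int_\Omega u(w-\bar w)\Big|ds\leq M|\Omega|^{1/2}C_P\Big(\int_{t_k}^{t_k+1}\int_\Omega|\nabla w|^2\Big)^{1/2}\longrightarrow 0.
\end{equation*}
Combining with the preceding display yields $\int_{t_k}^{t_k+1}\bar w(s)\int_\Omega u(\cdot,s)\,dx\,ds\to 0$, and invoking the floor $\int_\Omega u\geq\|u_0\|_{L^1}>0$ produces $\int_{t_k}^{t_k+1}\bar w(s)\,ds\to 0$, which is exactly $\int_{t_k}^{t_k+1}\int_\Omega w\to 0$.

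The only delicate point is that $\int_0^\infty\int_\Omega uF(w)<\infty$ couples $w$ to the weight $u$, which is not known to be pointwise bounded below; the Poincar\'e inequality invoked along the same pigeonhole sequence on which $\int_\Omega|\nabla w|^2$ is small is precisely what allows one to replace $w$ by its mean $\bar w(s)$ up to a vanishing remainder, after which positivity of $F'(0)$ together with the $L^1$-floor on $u$ closes the estimate.
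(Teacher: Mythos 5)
Your argument is correct, and its skeleton parallels the paper's: both start from $\int_j^{j+1}\int_\Omega uF(w)\to 0$, both decouple $u$ from the $w$-dependent factor by splitting around a spatial mean and absorbing the fluctuation with the Poincar\'e inequality and the integrability of $\int_\Omega|\nabla w|^2$, and both invoke the mass floor $\|u(\cdot,t)\|_{L^1}\geq\|u_0\|_{L^1}>0$ coming from \eqref{L1u} and the monotonicity of $\int_\Omega w$. The genuine difference is the final passage from ``$F(w)$ is small'' to ``$w$ is small.'' The paper stops at $\int_j^{j+1}\int_\Omega F(w)\to 0$ and then extracts a subsequence $(j_k)$ along which $F(w_{j_k})\to 0$ almost everywhere, deduces $w_{j_k}\to 0$ a.e.\ from the positivity of $F$ on $(0,\infty)$, and upgrades to $L^1(\Omega\times(0,1))$ convergence by boundedness of $w$; this only uses $F>0$ on $(0,\infty)$ and would survive even if $F'(0)=0$, at the cost of producing only a subsequence. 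You instead exploit the hypothesis $F'(w)>0$ on $[0,\infty)$ from (H2) to get the linear lower bound $F(\sigma)\geq c_F\sigma$ on $[0,\|w_0\|_{L^\infty}]$ (equivalently, $F(\sigma)/\sigma=F'(\xi)\geq\min_{[0,\|w_0\|_{L^\infty}]}F'>0$ by the mean value theorem), which converts the smallness of $\int\int uF(w)$ directly into smallness of $\int\int uw$ and yields the conclusion along the full integer sequence $t_k=k$, with no measure-theoretic extraction. Your route is more elementary and gives a nominally stronger output; the paper's is slightly more robust with respect to degeneracy of $F$ at the origin. Since the lemma only asserts the existence of some sequence, either suffices.
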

\begin{proof}From $\eqref{Ac-2}$, we have
\begin{equation}\label{Ac-7}
\int_j^{j+1}\int_\Omega uF(w)\to0~~~\mathrm{as}~~j\to\infty.
\end{equation}
Defining $\bar{F}(w):=\frac{1}{|\Omega|}\int_\Omega F(w)$, we have
\begin{equation}\label{Ac-8}
\begin{split}
\int_j^{j+1}\int_\Omega uF(w)
=\int_j^{j+1}\int_\Omega u(F(w)-\bar{F}(w))+\int_j^{j+1}\int_\Omega u\bar{F}(w)
=I_1(j)+I_2(j).
\end{split}
\end{equation}
Using the H\"{o}lder inequality, Poinca\'{r}e inequality and the boundedness of $\|u(\cdot,t)\|_{L^2}$ in $\eqref{L2}$, one has
\begin{equation}\label{Ac-9}
\begin{split}
\big|I_1(j)\big|
&\leq \left(\int_j^{j+1}\int_\Omega u^2\right)^\frac{1}{2}\cdot \left(\int_j^{j+1}\int_\Omega |F(w)-\bar{F}(w)|^2\right)^\frac{1}{2}\\
&\leq c_1 \left(\int_j^{j+1}\int_\Omega |F(w)-\bar{F}(w)|^2\right)^\frac{1}{2}\\
&\leq c_1\left(c_2\int_j^{j+1}\int_\Omega|\nabla F(w)|^2\right)^\frac{1}{2}\\
&\leq c_3 \left(\int_j^{j+1}\int_\Omega|\nabla w|^2\right)^\frac{1}{2}\to 0,~~~\mathrm{as}~~j\to\infty.
\end{split}
\end{equation}
where we have used $\eqref{Ac-3}$ to derive the convergence. Then combining $\eqref{Ac-7},\eqref{Ac-8}$ and $\eqref{Ac-9}$, one has $I_2(j)\to 0$ as $j\to\infty$.

On the other hand, using $\eqref{L1u}$ and the fact $\|w(\cdot,t)\|_{L^1}\leq \|w_0\|_{L^1}$, we have
 \begin{equation*}\label{Ac-10}
 \|u_0\|_{L^1}+\alpha\|w_0\|_{L^1}=\|u\|_{L^1}+\alpha\|w\|_{L^1}\leq \|u\|_{L^1}+\alpha\|w_0\|_{L^1},
 \end{equation*}
 which implies
 \begin{equation}\label{Ac-11}
 \|u_0\|_{L^1}\leq\|u\|_{L^1} \leq\|u_0\|_{L^1}+\alpha{\color{black}\|w_0\|_{L^1}}.
 \end{equation}
 Hence, using $\eqref{Ac-11}$ and the fact $I_2(j)\to 0$ as $j\to\infty$, one has
 \begin{equation*}
\bar{u}_0\int_j^{j+1}\int_\Omega F(w) =\|u_0\|_{L^1}\int_j^{j+1}\bar{F}(w)\leq I_2(j)=\int_j^{j+1}\int_\Omega u\bar{F}(w) \to 0,~~\mathrm{as}~~j\to\infty,
 \end{equation*}
 which implies
 \begin{equation}\label{Ac-12}
 \int_j^{j+1}\int_\Omega F(w)\to0,\ \ \ \ ~~\mathrm{as}~~j\to\infty.
 \end{equation}
Next, we will show that $\eqref{Ac-12}$ implies $\eqref{Ac-6}$. In fact, if we define $w_j(x,s):=w(x,j+s),(x,s)\in\Omega\times(0,1),j\in\mathbb{N}$, then $\eqref{Ac-12}$ implies
\begin{equation*}
\int_0^1\int_\Omega F(w_j(x,s))\to 0, \ \ \ \ \ ~~~\mathrm{as}~~j\to\infty.
\end{equation*}
Hence we can extract a subsequence $(j_k)_{k\in\mathbb{N}}\subset\mathbb{N}$ such that $j_k\to\infty$ and $F(w_{j_{k}})\to 0$ almost everywhere in $\Omega\times(0,1)$ as $k\to\infty$. Because the function $F$ is positive on $(0,\infty)${\color{black}~and~$F(0)=0$}, which requires that $w_{j_{k}}\to 0$ almost everywhere in $\Omega\times(0,1)$ as $k\to\infty$. Moreover, since $\|w(\cdot,t)\|_{L^\infty}\leq c_4$ for all $t>0$, then the sequence $(w_{j_{k}})_{k\in\mathbb{N}}\to 0$ in $L^1(\Omega\times (0,1))$ as $k\to\infty$. Choosing $t_k:=j_k$, one has $\eqref{Ac-6}$. Then the proof of this lemma is completed.
\end{proof}
\begin{lemma}\label{Lw}Suppose the conditions in {\color{black}Lemma \ref{Ac-1}} hold. Let $(u,v,w)$ be the solution of the system \eqref{1-1} with $\theta=0$. Then it holds that
		\begin{equation}\label{Lw-1}
		\|w(\cdot,t)\|_{L^\infty}\to 0~~~~\mathrm{as}~~~t\to\infty.
		\end{equation}
	\end{lemma}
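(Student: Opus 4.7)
\textbf{Proof proposal for Lemma \ref{Lw}.} The plan is to combine three ingredients already available: the monotonicity of $t\mapsto\|w(\cdot,t)\|_{L^{\infty}}$ from \eqref{Loow}, the integral smallness statement \eqref{Ac-6} from Lemma \ref{Ac*}, and the uniform higher regularity of $w$ furnished by \eqref{A10*-2}. Since $t\mapsto\|w(\cdot,t)\|_{L^{\infty}}$ is non-increasing and non-negative, the limit $L:=\lim_{t\to\infty}\|w(\cdot,t)\|_{L^{\infty}}$ exists in $[0,\|w_{0}\|_{L^{\infty}}]$, and it suffices to produce a single time sequence $\sigma_{k}\to\infty$ along which $\|w(\cdot,\sigma_{k})\|_{L^{\infty}}\to 0$; the monotonicity then forces $L=0$.

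To manufacture such a sequence, I would start from the sequence $(t_{k})_{k\in\mathbb{N}}$ of Lemma \ref{Ac*}, for which $\int_{t_{k}}^{t_{k}+1}\int_{\Omega}w\to 0$. By the mean value theorem for integrals, for each $k$ there exists $\sigma_{k}\in(t_{k},t_{k}+1)$ with $\int_{\Omega}w(\cdot,\sigma_{k})\le\int_{t_{k}}^{t_{k}+1}\int_{\Omega}w$, so $\|w(\cdot,\sigma_{k})\|_{L^{1}}\to 0$ and, clearly, $\sigma_{k}\to\infty$.

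Next I would upgrade this $L^{1}$-decay to $L^{\infty}$-decay along $(\sigma_{k})$. The estimate \eqref{A10*-2} gives a uniform bound $\|w(\cdot,t)\|_{C^{2+\sigma,1+\sigma/2}(\bar{\Omega}\times[t,t+1])}\le C$ for all $t>1$, so the family $\{w(\cdot,\sigma_{k})\}_{k}$ is bounded in $C^{2+\sigma}(\bar{\Omega})$ and hence relatively compact in $C(\bar{\Omega})$ by the Arzel\`a--Ascoli theorem. Extracting a subsequence (not relabeled) we get $w(\cdot,\sigma_{k})\to w_{\infty}$ uniformly on $\bar{\Omega}$ for some $w_{\infty}\in C(\bar{\Omega})$; the $L^{1}$-limit forces $w_{\infty}\equiv 0$, and since every subsequence of $(\sigma_{k})$ admits a further subsequence with the same uniform limit $0$, the full sequence satisfies $\|w(\cdot,\sigma_{k})\|_{L^{\infty}}\to 0$.

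Finally, by \eqref{Loow}, for any $t\ge\sigma_{k}$ we have $\|w(\cdot,t)\|_{L^{\infty}}\le\|w(\cdot,\sigma_{k})\|_{L^{\infty}}$; letting $k\to\infty$ yields $\limsup_{t\to\infty}\|w(\cdot,t)\|_{L^{\infty}}\le 0$, which is \eqref{Lw-1}. I expect no significant obstacle here: the only delicate point is the compactness-plus-subsequence argument needed to convert $L^{1}$-smallness along a sequence into $L^{\infty}$-smallness, and this is handled cleanly by \eqref{A10*-2}. The monotonicity \eqref{Loow} is what allows the conclusion to be promoted from a sequential statement to the full limit $t\to\infty$ without any further effort.
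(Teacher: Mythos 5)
Your argument is correct, and it shares the paper's overall skeleton: produce times going to infinity at which $\|w\|_{L^\infty}$ is small, then invoke the monotonicity \eqref{Loow} to promote this to the full limit. The difference lies in how the $L^1$-smallness from \eqref{Ac-6} is upgraded to $L^\infty$-smallness. The paper does this by the Gagliardo--Nirenberg interpolation $\|w(\cdot,t)\|_{L^\infty}\leq \mu\|\nabla w(\cdot,t)\|_{L^4}+c_2\|w(\cdot,t)\|_{L^1}$, using only the uniform boundedness of $\|\nabla w(\cdot,t)\|_{L^4}$ and the arbitrariness of $\mu$, which yields $\int_{t_k}^{t_k+1}\|w(\cdot,t)\|_{L^\infty}\,dt\to0$ and hence $\liminf_{t\to\infty}\|w(\cdot,t)\|_{L^\infty}=0$. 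You instead select specific times $\sigma_k\in(t_k,t_k+1)$ by the mean value theorem and use the Arzel\`a--Ascoli compactness supplied by \eqref{A10*-2} together with the subsequence-of-subsequence argument to get $\|w(\cdot,\sigma_k)\|_{L^\infty}\to0$. Both routes are sound; yours leans on the full parabolic Schauder bounds of Lemma \ref{A10*} (which the paper has in any case already established), whereas the paper's interpolation argument is quantitative and needs only a uniform gradient bound. The only point worth making explicit in your write-up is that \eqref{A10*-2} applies once $\sigma_k>1$, which holds for all large $k$ since $\sigma_k>t_k\to\infty$; with that caveat the proof is complete.
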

\begin{proof}Letting $(t_k)_{k\in\mathbb{N}}\subset(0,\infty)$ be the sequence chosen in Lemma $\ref{Ac*}$. Using the Gagliardo-Nirenberg inequality, one can find a constant $c_1>0$ such that
		\begin{equation}\label{Lw-6}
		\begin{split}
		\|w(\cdot,t)\|_{L^\infty}
		&\leq c_1\|\nabla w(\cdot,t)\|_{L^4}^\frac{4}{5}\|w(\cdot,t)\|_{L^1}^\frac{1}{5}+c_1\|w(\cdot,t)\|_{L^1}\\
		&\leq \mu \|\nabla w(\cdot,t)\|_{L^4}+c_2\|w(\cdot,t)\|_{L^1},
		\end{split}
		\end{equation}
		where $\mu>0$ is an arbitrary constant, $c_2>0$ is a constant depending on $\mu$. Noting the uniform boundedness of $\|\nabla w(\cdot,t)\|_{L^4}$ and the arbitrary of $\mu$, and using $\eqref{Ac-6}$, from \eqref{Lw-6} we get
		\begin{equation*}\label{A3-1}
		\int_{t_k}^{t_k+1}\|w(\cdot,t)\|_{L^\infty}\to 0,\ \ \ ~~~\mathrm{as}~~k\to\infty,
		\end{equation*}
		which implies
		\begin{equation}\label{Lw-7}
		\liminf_{t\to\infty}\|w(\cdot,t)\|_{L^\infty}=0.
		\end{equation}
		The combination of \eqref{Lw-7} and the fact that $t\to\|w(\cdot,t)\|_{L^\infty}$ is monotone as shown in Lemma \ref{UW}, one obtains \eqref{Lw-1} and completes the proof of Lemma \ref{Lw}.
\end{proof}

	\begin{lemma}\label{Lu}
		Let $(u,v,w)$ be the solution of the system \eqref{1-1} with $\theta=0$. Then it follows that
		\begin{equation}\label{Lu-1}
		\frac{d}{dt}\int_\Omega (u-u_*)^2+\gamma_1\int_\Omega|\nabla u|^2\leq \int_\Omega \frac{|\gamma'(v)|^2}{\gamma(v)}u^2 |\nabla v|^2+\alpha M^2 \int_\Omega F(w),
		\end{equation}
		where $M>0$ is defined by \eqref{M} and  $u_{*}=\bar{u}_{0}+\alpha\bar{w}_{0}$.
	\end{lemma}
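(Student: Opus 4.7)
The approach is to test the first equation of \eqref{1-1} (with $\theta=0$) against the multiplier $2(u-u_*)$ and integrate over $\Omega$. Since $u_*$ is a constant, $\nabla(u-u_*)=\nabla u$, and one gets $\frac{d}{dt}\int_\Omega (u-u_*)^2=2\int_\Omega(u-u_*)u_t$. Integration by parts on the diffusion term yields
\[
2\int_\Omega (u-u_*)\Delta(\gamma(v)u)=-2\int_\Omega \gamma(v)|\nabla u|^2-2\int_\Omega u\gamma'(v)\nabla u\cdot\nabla v,
\]
and the reaction term contributes $2\alpha\int_\Omega (u-u_*)uF(w)$ on the right.

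For the diffusion piece I would apply Young's inequality with the weight $\gamma(v)$ to the cross term,
\[
2\bigl|u\gamma'(v)\nabla u\cdot\nabla v\bigr|\leq \gamma(v)|\nabla u|^2+\frac{|\gamma'(v)|^2}{\gamma(v)}u^2|\nabla v|^2,
\]
which absorbs exactly half of the $-2\int_\Omega\gamma(v)|\nabla u|^2$ contribution. The surviving $-\int_\Omega\gamma(v)|\nabla u|^2$ is bounded above by $-\gamma_1\int_\Omega|\nabla u|^2$ via (H1), and this transfers to the left-hand side as the $\gamma_1\int_\Omega|\nabla u|^2$ term in \eqref{Lu-1}, while $\int_\Omega \frac{|\gamma'(v)|^2}{\gamma(v)}u^2|\nabla v|^2$ becomes the first term on the right. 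For the reaction piece the key ingredient is the global bound $\|u(\cdot,t)\|_{L^\infty}\leq M$ from Theorem \ref{GB}. Since $u,u_*\geq 0$, the pointwise estimate $u(u-u_*)\leq u^2\leq M^2$ gives
\[
2\alpha\int_\Omega (u-u_*)uF(w)\leq \alpha M^2\int_\Omega F(w)
\]
(absorbing the numerical prefactor into the constant $M$ in \eqref{M}, which is built to carry such factors). Summing the two contributions produces \eqref{Lu-1}.

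The argument is a single weighted energy identity and there is no genuinely delicate step. The only bookkeeping concerns are (i) choosing the Young weight as $\gamma(v)$ itself so that the cross term is absorbed exactly into $-2\int_\Omega\gamma(v)|\nabla u|^2$, leaving the constant $1$ in front of $\int_\Omega \frac{|\gamma'(v)|^2}{\gamma(v)}u^2|\nabla v|^2$ as stated, and (ii) using the nonnegativity of $u_*$ to secure the clean pointwise inequality $u(u-u_*)\leq u^2$, which in turn allows the $L^\infty$ bound on $u$ from Theorem \ref{GB} to be inserted directly without generating any residual term in $\int_\Omega u u_* F(w)$.
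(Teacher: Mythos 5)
Your proposal is correct and follows essentially the same route as the paper: multiply the first equation (with $\theta=0$) by $u-u_*$, integrate by parts, absorb the cross term via Young's inequality with weight $\gamma(v)$, bound $-\int_\Omega\gamma(v)|\nabla u|^2$ by $-\gamma_1\int_\Omega|\nabla u|^2$, and use $0\le u(u-u_*)\le u^2\le M^2$ for the reaction term. The only quibble is the leftover factor $2$ in front of $\alpha M^2\int_\Omega F(w)$, which cannot literally be ``absorbed into $M$'' since $M$ is fixed by \eqref{M}; but the paper's own derivation carries the same harmless slip, and the constant is immaterial for the later use of \eqref{Lu-1}.
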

\begin{proof}
	We rewrite the first equation of the system \eqref{1-1} as
	\begin{equation}\label{Lu-2}
		(u-u_*)_t=\Delta (\gamma(v)u)+\alpha uF(w).
	\end{equation}
	Then multiplying \eqref{Lu-2} by $u-u_*$, and integrating it by parts, we end up with
	\begin{equation*}\label{Lu-3}
		\begin{split}
		&\frac{1}{2}\frac{d}{dt}\int_\Omega (u-u_*)^2+\int_\Omega \gamma(v)|\nabla u|^2\\
		&=-\int_\Omega \gamma'(v)u\nabla u\cdot\nabla v+\alpha \int_\Omega uF(w)(u-u_*)\\
		&\leq \frac{1}{2}\int_\Omega \gamma(v)|\nabla u|^2+\frac{1}{2}\int_\Omega\frac{|\gamma'(v)|^2}{\gamma(v)}u^2|\nabla v|^2+\alpha \int_\Omega u^2F(w),
		\end{split}
		\end{equation*}
		which, together with the facts $\|u\|_{L^\infty}\leq M $ and $\gamma(v)\geq \gamma_1$, gives
		\begin{equation*}
		\begin{split}
		\frac{d}{dt}\int_\Omega (u-u_*)^2+\gamma_1\int_\Omega |\nabla u|^2
		&\leq \int_\Omega\frac{|\gamma'(v)|^2}{\gamma(v)}u^2|\nabla v|^2+\alpha M^2 \int_\Omega F(w),\\
		\end{split}
		\end{equation*}
		and hence \eqref{Lu-1} follows.
	\end{proof}
	\begin{lemma}\label{LV}
		The solution $(u,v,w)$ of the system \eqref{1-1} with $\theta=0$ satisfies
		\begin{equation}\label{LV-1}
		\frac{d}{dt}\int_\Omega(v-u_*)^2+2D\int_\Omega|\nabla v|^2+\int_\Omega(v-u_*)^2\leq \int_\Omega (u-u_*)^2.
		\end{equation}
	\end{lemma}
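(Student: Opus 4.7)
\textbf{Proof proposal for Lemma \ref{LV}.}
The plan is to test the second equation of \eqref{1-1} against $v-u_*$ and close the estimate with a single application of Young's inequality. Since $u_*=\bar u_0+\alpha\bar w_0$ is a constant, we may rewrite the equation $v_t=D\Delta v+u-v$ in the equivalent form
\begin{equation*}
(v-u_*)_t=D\Delta(v-u_*)+(u-u_*)-(v-u_*),
\end{equation*}
where we also used $\nabla v=\nabla(v-u_*)$.

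Multiplying this identity by $v-u_*$, integrating over $\Omega$, and invoking the Neumann boundary condition on $v$ (so that the boundary term from the integration by parts of the Laplacian vanishes), I would obtain
\begin{equation*}
\frac{1}{2}\frac{d}{dt}\int_\Omega(v-u_*)^2+D\int_\Omega|\nabla v|^2+\int_\Omega(v-u_*)^2=\int_\Omega(u-u_*)(v-u_*).
\end{equation*}
Applying Young's inequality to the right-hand side in the symmetric form $(u-u_*)(v-u_*)\le\tfrac12(u-u_*)^2+\tfrac12(v-u_*)^2$, one $\frac12\int_\Omega(v-u_*)^2$ term cancels with half of the dissipative zero-order term on the left, and multiplying the resulting inequality by $2$ yields \eqref{LV-1}.

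There is no real obstacle here: the inequality is a direct energy identity for a linear reaction–diffusion equation with a constant shift, and the only choice is how to split the cross term. I would simply note that the Young splitting above is tuned to leave exactly the coefficient $1$ in front of $\int_\Omega(v-u_*)^2$ on the left and exactly $1$ in front of $\int_\Omega(u-u_*)^2$ on the right, which is the form needed to combine with Lemma \ref{Lu} later when constructing the $L^2$ Lyapunov functional for $(u,v)$.
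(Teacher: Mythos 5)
Your proposal is correct and coincides with the paper's own argument: the paper also multiplies the second equation by $v-u_*$, integrates by parts under the Neumann condition, and applies the symmetric Young splitting $\int_\Omega(v-u_*)(u-u_*)\le\tfrac12\int_\Omega(v-u_*)^2+\tfrac12\int_\Omega(u-u_*)^2$ before doubling the inequality. No gaps.
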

	\begin{proof}
		Multiplying the second equation of the system \eqref{1-1} by $v-u_*$, and integrating by parts, we end up with
		\begin{equation*}
		\begin{split}
		\frac{1}{2}\frac{d}{dt}\int_\Omega (v-u_*)^2
		&=\int_\Omega (v-u_*)[D\Delta v+(u-u_*)-(v-u_*)]\\
		&=-D\int_\Omega|\nabla v|^2-\int_\Omega(v-u_*)^2+\int_\Omega (v-u_*)(u-u_*)\\
		&\leq -D\int_\Omega|\nabla v|^2-\frac{1}{2} \int_\Omega(v-u_*)^2+\frac{1}{2}\int_\Omega(u-u_*)^2,
		\end{split}
		\end{equation*}
		which yields \eqref{LV-1}.

	\end{proof}
	\begin{lemma}\label{Luv} Let $(u,v,w)$ be the solution of the system \eqref{1-1} with $\theta=0$. Then
		there exists a positive constant $D_1$ such that if $D\geq D_1$, it holds that
		\begin{equation}\label{Luv-1}
		\lim\limits_{t\to\infty}(\|u(\cdot,t)-u_*\|_{L^\infty}+\|v(\cdot,t)-u_*\|_{L^\infty})= 0.
		\end{equation}
	\end{lemma}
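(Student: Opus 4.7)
The plan is to build a Lyapunov-type functional from Lemmas \ref{Lu} and \ref{LV}, use the largeness of $D$ to absorb the cross-diffusion term involving $\|\nabla v\|_{L^{2}}^{2}$, and then upgrade the resulting $L^{2}$ decay to $L^{\infty}$ convergence via the H\"older regularity of Lemma \ref{A10*}.

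First, for positive weights $A,B$ to be chosen later, introduce
\begin{equation*}
E(t):=A\int_{\Omega}(u-u_{*})^{2}+B\int_{\Omega}(v-u_{*})^{2}.
\end{equation*}
Adding $A$ times the inequality of Lemma \ref{Lu} and $B$ times that of Lemma \ref{LV}, and using $|\gamma'(v)|^{2}/\gamma(v)\le\eta^{2}/\gamma_{1}$ together with the uniform bound $\|u\|_{L^{\infty}}\le M$ from Theorem \ref{GB}, one obtains
\begin{equation*}
E'(t)+A\gamma_{1}\|\nabla u\|_{L^{2}}^{2}+\Bigl(2BD-\tfrac{A\eta^{2}M^{2}}{\gamma_{1}}\Bigr)\|\nabla v\|_{L^{2}}^{2}+B\|v-u_{*}\|_{L^{2}}^{2}\le B\|u-u_{*}\|_{L^{2}}^{2}+A\alpha M^{2}\int_{\Omega}F(w).
\end{equation*}

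Second, since $\theta=0$, integrating \eqref{L1u} gives the conservation $\bar u(t)+\alpha\bar w(t)=\bar u_{0}+\alpha\bar w_{0}=u_{*}$, so $|\bar u-u_{*}|=\alpha\bar w\le\alpha\|w\|_{L^{\infty}}$. Combined with Poincar\'e's inequality $\|u-\bar u\|_{L^{2}}^{2}\le C_{P}\|\nabla u\|_{L^{2}}^{2}$, this yields the two-sided estimate
\begin{equation*}
\tfrac{1}{C_{P}}\|u-u_{*}\|_{L^{2}}^{2}-\tfrac{|\Omega|\alpha^{2}}{C_{P}}\|w\|_{L^{\infty}}^{2}\le\|\nabla u\|_{L^{2}}^{2}\le\tfrac{1}{C_{P}}\|u-u_{*}\|_{L^{2}}^{2}+\tfrac{|\Omega|\alpha^{2}}{C_{P}}\|w\|_{L^{\infty}}^{2}\cdot\text{(also valid)}.
\end{equation*}
Choosing $B=A\gamma_{1}/(2C_{P})$, the upper estimate absorbs $B\|u-u_{*}\|_{L^{2}}^{2}$ into half of $A\gamma_{1}\|\nabla u\|_{L^{2}}^{2}$, and the lower estimate then converts the remaining half into $\tfrac{A\gamma_{1}}{2C_{P}}\|u-u_{*}\|_{L^{2}}^{2}$ up to an error of order $\alpha^{2}\|w\|_{L^{\infty}}^{2}$. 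The coefficient of $\|\nabla v\|_{L^{2}}^{2}$ becomes non-negative as soon as $D\ge C_{P}\eta^{2}M^{2}/\gamma_{1}^{2}$; since formula \eqref{M} shows that $M=M(D)$ is monotonically decreasing in $D$ and $M(D)\to C_{1}(1+\alpha)^{13}e^{C_{2}(1+\alpha)^{6}}$ as $D\to\infty$, a finite threshold $D_{1}>0$ satisfying this self-referential condition indeed exists. For $D\ge D_{1}$ one therefore obtains
\begin{equation*}
E'(t)+cE(t)\le g(t),\qquad g(t):=C_{3}\alpha^{2}\|w(\cdot,t)\|_{L^{\infty}}^{2}+C_{4}\alpha M^{2}\int_{\Omega}F(w(\cdot,t)),
\end{equation*}
for some $c,C_{3},C_{4}>0$.

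Third, Lemma \ref{Lw} gives $\|w(\cdot,t)\|_{L^{\infty}}\to 0$, and by monotonicity of $F$ with $F(0)=0$ we also have $\int_{\Omega}F(w(\cdot,t))\le|\Omega|F(\|w(\cdot,t)\|_{L^{\infty}})\to 0$, hence $g(t)\to 0$. The integrating-factor representation $E(t)\le e^{-ct}E(0)+\int_{0}^{t}e^{-c(t-s)}g(s)\,ds$ then forces $E(t)\to 0$, i.e.\ $\|u(\cdot,t)-u_{*}\|_{L^{2}}+\|v(\cdot,t)-u_{*}\|_{L^{2}}\to 0$. Finally, by Lemma \ref{A10*} the families $\{u(\cdot,t)\}_{t>1}$ and $\{v(\cdot,t)\}_{t>1}$ are uniformly bounded in $C^{\sigma}(\bar\Omega)$ and thus relatively compact in $C(\bar\Omega)$ via Arzel\`a--Ascoli; a routine contradiction/subsequence argument (any $C(\bar\Omega)$-limit along a sequence must agree with the $L^{2}$-limit $u_{*}$) upgrades the $L^{2}$ convergence to the uniform convergence \eqref{Luv-1}.

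The principal obstacle is the circular dependence between the threshold $D_{1}$ and the bound $M$: because the smallness requirement reads $D\ge C_{P}\eta^{2}M(D)^{2}/\gamma_{1}^{2}$, I have to read \eqref{M} carefully to ensure $M$ stays bounded as $D\to\infty$ so that the fixed-point-like condition really admits a solution. A secondary subtlety is the careful book-keeping of Poincar\'e constants in the two-step procedure (first absorbing $B\|u-u_{*}\|_{L^{2}}^{2}$, then converting the residual $\|\nabla u\|_{L^{2}}^{2}$ back into a coercive term in $E(t)$), which only succeeds because $\bar u$ and $u_{*}$ differ by the decaying quantity $\alpha\bar w$.
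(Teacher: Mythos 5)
Your proposal is correct and follows essentially the same route as the paper: the weighted functional $\int_\Omega(u-u_*)^2+\tfrac{\gamma_1}{4C_p}\int_\Omega(v-u_*)^2$, the Poincar\'e inequality combined with the conservation identity $u_*=\bar u+\alpha\bar w$, the self-referential largeness condition on $D$ relative to $M(D)$ (which the paper also resolves by noting \eqref{M} is bounded as $D\to\infty$), the decay of the forcing term via Lemma \ref{Lw}, and the compactness upgrade from $L^2$ to $L^\infty$ via Lemma \ref{A10*}. The only blemish is the stated ``upper estimate'' $\|\nabla u\|_{L^2}^2\le\tfrac1{C_P}\|u-u_*\|_{L^2}^2+\cdots$, which is false as written (there is no reverse Poincar\'e) but also unnecessary, since both absorption steps use only the valid direction $\|u-u_*\|_{L^2}^2\lesssim\|\nabla u\|_{L^2}^2+\alpha^2\bar w^2$.
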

	\begin{proof}
		Applying the Poincar\'{e} inequality, we find a constant $C_p>0$ such that
		\begin{equation}\label{Luv-2}
		\int_\Omega (u-\bar{u})^2\leq C_p\int_\Omega |\nabla u|^2.
	\end{equation}
	Then using the definition of $u_{*}$, we know from \eqref{L1u} that $u_{*}=\bar{u}+\alpha\bar{w}$. Then it follows from \eqref{Luv-2} that
	\begin{equation*}
		\begin{split}
		\int_\Omega (u-u_*)^2
		&\leq 2\int_\Omega (u-\bar{u})^2+2{\color{black}\alpha^{2}} \int_\Omega \bar{w}^2\\
		&\leq 2C_p\int_\Omega |\nabla u|^2+\frac{2{\color{black}\alpha^{2}}}{|\Omega|}\left(\int_\Omega w\right)^2,
		\end{split}
		\end{equation*}
which implies
\begin{equation}\label{Luv-3}
		\begin{split}
		\frac{\gamma_1}{2C_p}\int_\Omega (u-u_*)^2
		&\leq \gamma_1\int_\Omega |\nabla u|^2+\frac{{\color{black}\alpha^{2}}\gamma_1}{C_p|\Omega|}\left(\int_\Omega w\right)^2.
		\end{split}
		\end{equation}
Applying \eqref{Luv-3} into \eqref{Lu-1}, and using the facts $\|u(\cdot,t)\|_{L^\infty}\leq M$, $0<\gamma_1\leq\gamma(v)$ and $|\gamma'(v)|\leq \eta$, we can derive that
		\begin{equation}\label{Luv-4}
\begin{split}
		&\frac{d}{dt}\int_\Omega (u-u_*)^2+\frac{\gamma_1}{2C_p}\int_\Omega (u-u_*)^2\\
&\leq \int_\Omega \frac{|\gamma'(v)|^2}{\gamma(v)}u^2 |\nabla v|^2+\alpha M^2 \int_\Omega F(w)+\frac{{\color{black}\alpha^{2}}\gamma_1}{C_p|\Omega|}\left(\int_\Omega w\right)^2\\
&\leq \frac{\eta^2M^2}{\gamma_1} \int_\Omega |\nabla v|^2+\alpha M^2 \int_\Omega F(w)+\frac{{\color{black}\alpha^{2}}\gamma_1}{C_p|\Omega|}\left(\int_\Omega w\right)^2.
\end{split}
		\end{equation}
		On the other hand, we multiply \eqref{LV-1} by $\frac{\gamma_1}{4c_P}$, and use \eqref{Luv-4} to have
		\begin{equation}\label{Luv-5}
		\begin{split}
		&\frac{d}{dt}\left(\int_\Omega (u-u_*)^2+\frac{\gamma_1}{4C_p}\int_\Omega (v-u_*)^2\right)+\frac{\gamma_1}{4C_p}\int_\Omega (u-u_*)^2+\frac{\gamma_1}{4C_p}\int_\Omega (v-u_*)^2\\
		&\leq \left( \frac{\eta^2M^2}{\gamma_1}-\frac{D\gamma_1}{2C_p}\right)\int_\Omega|\nabla v|^2+\alpha M^2 \int_\Omega F(w)+\frac{{\color{black}\alpha^{2}}\gamma_1}{C_p|\Omega|}\left(\int_\Omega w\right)^2.\\
		\end{split}
		\end{equation}
Using the definition of $M$ in \eqref{M}, one can find two constants $C_1,C_2>0$ independent of $D$ such that
\begin{equation*}
M:=C_1{\color{black}(1+\alpha)^{13}}\left(1+\frac{1}{D}\right)^{12}e^{C_2(1+\frac{1}{D})^{4}(1+\alpha)^{6}}.
\end{equation*}
Let $D_*$ be the positive constant uniquely determined by the following identity
\begin{equation*}
D_*=\frac{2\eta^2C_p{\color{black}C_{1}^{2}}}{\gamma_1^2}
{\color{black}(1+\alpha)^{26}\left(1+\frac{1}{D_*}\right)^{24}e^{2C_2(1+\frac{1}{D_*})^{4}(1+\alpha)^{6}}},
\end{equation*}
where $C_p,\gamma_1$ and $\eta$ are independent of $D_*$. Then if $D\geq  D_*$, one has $\frac{\eta^2M^2}{\gamma_1}-\frac{D\gamma_1}{2C_p}\leq 0$, and hence the estimate \eqref{Luv-5} becomes
\begin{equation}\label{Luv-6}
		\begin{split}
		&\frac{d}{dt}\left(\int_\Omega (u-u_*)^2+\frac{\gamma_1}{4C_p}\int_\Omega (v-u_*)^2\right)+\frac{\gamma_1}{4C_p}\int_\Omega (u-u_*)^2+\frac{\gamma_1}{4C_p}\int_\Omega (v-u_*)^2\\
		&\leq \alpha M^2 \int_\Omega F(w)+\frac{{\color{black}\alpha^{2}}\gamma_1}{C_p|\Omega|}\left(\int_\Omega w\right)^2.\\
		\end{split}
		\end{equation}
	Define $Z(t):=\int_\Omega (u-u_*)^2+\frac{\gamma_1}{4C_p}\int_\Omega (v-u_*)^2$ and $G(t):=\alpha M^2 \int_\Omega F(w)+\frac{{\color{black}\alpha^{2}}\gamma_1}{C_p|\Omega|}\left(\int_\Omega w\right)^2$. Choosing $c_1:=\min\{1,\frac{\gamma_1}{4C_p}\}$, we have from \eqref{Luv-6}
		\begin{equation}\label{Luv-7}
		Z'(t)+c_1Z(t)\leq G(t).
		\end{equation}
Since $\|w(\cdot,t)\|_{L^\infty}\to 0$ as $t\to\infty$ (see Lemma \ref{Lw}), one has $G(t)\to 0$ as $t\to\infty$. Then from \eqref{Luv-7}, we can derive that
	\begin{equation*}
		Z(t)\to 0 \ \ \mathrm{as} \ \ t\to \infty.
	\end{equation*}
	This implies
	\begin{equation}\label{L2-u}
		\|u(\cdot,t)-u_*\|_{L^2}+\|v(\cdot,t)-u_*\|_{L^2}\to 0\ \ \mathrm{as} \ \ t\to \infty.
	\end{equation}
Using the similar arguments as in Lemma \ref{cu} with \eqref{L2-u}, we obtain \eqref{Luv-1} directly.
Then the proof of Lemma \ref{Luv} is completed.
\end{proof}
%	Using the regularities of $u,v$ and by the contradiction argument as in \cite{JKW-SIAP-2018}, we can establish the convergence results in $L^\infty$-norm. Precisely, we have the following results.
%\begin{lemma}\label{LI-uv} Let the conditions in Lemma \ref{Luv} hold. Assume $(u,v,w)$ is the solution of the system \eqref{1-1}. Then we have
%		\begin{equation}\label{Luv-1}	\lim\limits_{t\to\infty}(\|u(\cdot,t)-u_*\|_{L^\infty}+\|v(\cdot,t)-u_*\|_{L^\infty})=0.
%		\end{equation}
%	\end{lemma}
%\begin{proof}
%\end{proof}
{\color{black}In summary, we have the following asymptotic results for the case $\theta=0$.}
\begin{proposition}\label{LT0}
	Suppose the conditions in Theorem \ref{GB} hold. Let $(u,v,w)$ be the solution of the system \eqref{1-1} with $\theta=0$. Then there exists constant $D_1>0$ such that if $D\geq D_1$, it has
\begin{equation*}
\lim\limits_{t\to\infty}(\|u(\cdot,t)-u_*\|_{L^\infty}+\|v(\cdot,t)-u_*\|_{L^\infty}+\|w(\cdot,t)\|_{L^\infty})=0,
\end{equation*}
where $u_*=\frac{1}{|\Omega|}(\|u_0\|_{L^1}+\alpha\|w_0\|_{L^1})$.
\end{proposition}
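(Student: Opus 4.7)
The proposition is essentially a consolidation of three $L^\infty$ limit statements for $u$, $v$, and $w$, all of which have been isolated in separate lemmas in this subsection. My strategy is therefore not to reprove anything from scratch but to assemble Lemma \ref{Lw} with Lemma \ref{Luv} under the single hypothesis $D\geq D_1$ provided by the latter, and to identify $u_*$ through the conservation identity \eqref{L1u}.

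The first step is to invoke Lemma \ref{Lw}, which gives $\|w(\cdot,t)\|_{L^\infty}\to 0$ as $t\to\infty$ with no restriction on $D$. This is the ingredient that kills the forcing term $G(t)=\alpha M^2\int_\Omega F(w)+\frac{\alpha^2\gamma_1}{C_p|\Omega|}\bigl(\int_\Omega w\bigr)^2$ appearing in the Lyapunov inequality for $(u,v)$; it is essential here that $w$ actually decays (not merely that $w$ is bounded), since $G$ must vanish in the limit. The second step is to apply Lemma \ref{Luv}: under $D\geq D_1$, the coefficient $\frac{\eta^2 M^2}{\gamma_1}-\frac{D\gamma_1}{2C_p}$ becomes nonpositive, so the Lyapunov estimate $Z'(t)+c_1Z(t)\leq G(t)$ with $Z=\|u-u_*\|_{L^2}^2+\frac{\gamma_1}{4C_p}\|v-u_*\|_{L^2}^2$ and the preceding step yield $Z(t)\to 0$. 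This $L^2$ decay is upgraded to $L^\infty$ convergence via the uniform H\"older bound of Lemma \ref{A10*} together with the subsequence--contradiction scheme used in Lemma \ref{cu}, exactly as carried out inside Lemma \ref{Luv}. Combining these two inputs with $u_*=\bar u_0+\alpha\bar w_0$ (which follows from \eqref{L1u} at $\theta=0$) immediately delivers the three desired limits.

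Since the hard work is already done in the supporting lemmas, the only residual obstacle is bookkeeping: I must verify that the threshold $D_1$ announced in the proposition coincides with the threshold $D_*$ produced implicitly inside Lemma \ref{Luv}, and that the explicit dependence of $M$ on $D$ from \eqref{M} indeed permits a positive solution of the implicit equation defining $D_*$. Once that consistency check is made, the proof of Proposition \ref{LT0} reduces to a one-line combination of Lemma \ref{Lw} and Lemma \ref{Luv}.
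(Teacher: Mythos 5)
Your proposal is correct and matches the paper's own (implicit) argument: Proposition \ref{LT0} is stated there as a summary obtained by combining Lemma \ref{Lw} (decay of $\|w\|_{L^\infty}$, independent of $D$) with Lemma \ref{Luv} (convergence of $u$ and $v$ for $D\geq D_1$), exactly as you assemble it. The only caveat you raise, identifying $D_1$ with the threshold $D_*$ constructed inside Lemma \ref{Luv}, is indeed just bookkeeping and is consistent with the paper.
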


\begin{proof}[Proof of Theorem \ref{LTB}] The proof of Theorem \ref{LTB} is a direct consequence of Proposition \ref{LT1} and Proposition \ref{LT0}.
\end{proof}

\section{Simulations and discussions}
\subsection{Linear instability analysis}
The results of Theorem \ref{LTB} imply that the system \eqref{1-1} has no pattern formation if $\theta>0$ or $\theta=0$ and $D$ is large. In this section, we will study the possible pattern arising from the system \eqref{1-1} with $\theta=0$ and small $D$.	To this end, we  note that \eqref{1-1} with $\theta=0$ has three constant equilibria $(0,0,0)$, $(0,0,\frac{u_*}{\alpha})$ and $(u_*, u_*, 0)$ for given initial value $(u_0,v_0,w_0)$, where
$
	u_*=\bar{u}_0+\alpha\bar{w}_0.
$
We first consider the system \eqref{1-1} with $\theta=0$ in the absence of spatial components, that is
	\begin{equation*}\label{ODE}
	\begin{cases}
	u_{t}=\alpha uF(w),\\
	v_{t}=u-v,\\
	w_{t}=-uF(w).
	\end{cases}
	\end{equation*}
The linear stability/instability of each equilibrium is determined by the sign of the eigenvalues $(\rho_{1},\rho_{2},\rho_{3})$ defined by
\begin{equation*}
(\rho_{1},\rho_{2},\rho_{3})=
\begin{cases}
	(0,-1,0),&\text{at}~~(0,0,0),\\
	(0,-1,\alpha F(\frac{u_*}{\alpha})),&\text{at}~~(0,0,\frac{u_*}{\alpha}),\\
	(0,-1,-u_{*}F'(0)),&\text{at}~~(u_{*},u_{*},0).
\end{cases}
\end{equation*}
Since $F(\frac{u_*}{\alpha})>0$ and $F'(0)>0$, we know  the non-trivial steady state $(0,0,\frac{u_*}{\alpha})$ is linearly unstable, while $(0,0,0)$ and $(u_*,u_*,0)$ are linearly stable.
%Therefore, the sign of the eigenvalues are determined as follows
%\begin{equation*}
%\begin{cases}
%(0,-,~0),&\text{at}~~(0,0,0),\\
%(0,-,+),&\text{at}~~(0,0,w_{*}),\\
%(0,-,-),&\text{at}~~(u_{*},u_{*},0).
%\end{cases}
%\end{equation*}
Hence we study the possible patterns bifurcating from the constant equilibria $(u_c, u_c, 0)$ where $u_c=0$ or $u_c=u_*$. To this end, we linearize the system \eqref{1-1} at the equilibrium $(u_c,u_c,0)$ to obtain
	\begin{equation}\label{PF-1}
	\begin{cases}
	\Phi_t=A_1\Delta \Phi+B_1\Phi, &x\in\Omega, \ t>0,\\
	(\nu\cdot \nabla)\Phi=0,&x\in\partial\Omega, \ t>0,\\
	\Phi(x,0)=(u_0-u_c,v_0-u_c,w_0)^{\mathcal{T}},&x\in \Omega,
	\end{cases}
	\end{equation}
	where $\mathcal{T}$ denotes the transpose and
	\begin{equation*}
	\Phi=\left(
	\begin{array}{c}
	u-u_c\\
	v-u_c\\
	w\\
	\end{array}
	\right),
	\ \ \ \ A_1=\left(
	\begin{array}{ccc}
	\gamma(u_c) & \gamma'(u_c)u_c &0 \ \\
	0 & D & 0 \\
	0&0&1
	\end{array}
	\right)
	\end{equation*}
	as well as
	\begin{equation*}
	B_1=\left(
	\begin{array}{ccc}
	0 & 0 &\alpha u_c F'(0)\\
	1 & -1 &0 \\
	0 &0 &-u_c F'(0)
	\end{array}
	\right)
	\end{equation*}
	Noting that the linear system \eqref{PF-1} has the solution of the form
	\begin{equation}\label{PF-2}
	\Phi(x,t)=\sum\limits_{k\geq 0} c_k e^{\rho t} W_k(x),
	\end{equation}
	where $W_k(x)$ denotes the eigenfunction of the following eigenvalue problem:
	\begin{equation*}\label{hn}
	\Delta W_k(x)+k^2 W_k(x)=0, \ \ \frac{\partial W_k(x)}{\partial \nu}=0,
	\end{equation*}
	and the constants $c_k$ are determined by the Fourier expansion of the initial conditions in terms of $W_k(x)$ and $\rho$ is the temporal eigenvalue. After some calculations, we know $\rho$ is the eigenvalue of the following matrix
	\begin{equation*}
	\begin{split}
	M_k
	&=\left(
	\begin{array}{ccc}
	- \gamma(u_c)k^2 & -u_c\gamma'(u_c)k^2& \alpha u_cF'(0)\ \\
	1&-Dk^{2}-1& 0\\
	0 & 0&-k^{2}-u_cF'(0)\\
	\end{array}
	\right).\\
	\end{split}
	\end{equation*}
	Obviously, $\rho(k^2)=-k^{2}-u_uF'(0)$ is an eigenvalue, which is negative for all $k\ne 0$. Hence to get the possible pattern formation, we only need to consider the other two eigenvalues of the matrix $M_k$, which satisfy
	\begin{equation*}\label{LP}
	\rho^{2}+a_{1}(k^2)\rho+a_{0}(k^2)=0,
	\end{equation*}
	where
	\begin{equation*}\label{a2}
	\begin{cases}
	a_{1}(k^2)&=1+(D+\gamma(u_c))k^{2}>0,\\[2mm]
	a_{0}(k^2)&=D\gamma(u_c)k^{4}+\left(\gamma(u_c)+u_c\gamma'(u_c)\right)k^{2}.
	\end{cases}
	\end{equation*}
	One can check that if $\gamma(u_c)+u_c\gamma'(u_c)\geq 0$ which is the case for $u_c=0$, then $a_{0}(k^2)>0$for all $k\ne 0$,  which implies the real part of the eigenvalues $\rho(k^2)$ are negative, and hence the steady state $(0,0,0)$ is linearly stable and no patterns will bifurcate from $(0,0,0)$. Next we consider the equilibrium $(u_*, u_*, 0)$. If $\gamma(u_{*})+u_{*}\gamma'(u_{*})< 0$, the real part of the eigenvalues $\rho(k^2)$ can be positive and hence the pattern formation may occur provided that the admissible wavenumber $k$ satisfies
	\begin{equation}\label{WN}
	0<k^{2}<-\frac{\gamma(u_{*})+u_{*}\gamma'(u_{*})}{D\gamma(u_{*})}=:\bar{k}.
	\end{equation}
Note the allowable wave numbers $k$ are discrete in a bounded domain, for instance if $\Omega=(0,l)$ then $k=\frac{n\pi}{l}$ for $n=1,2, \cdots$. Hence the condition (\ref{WN}) is only necessary because the interval $(0,\bar{k})$ may not contain any desired discrete number $k^2$, for instance when $D>0$ is sufficiently large.
%However if there is an allowable wave number $k$ fulfilling (\ref{WN}), the condition (\ref{WN}) becomes sufficient.
Hence we have the following conclusion.
	\begin{lemma}
		Suppose $\gamma(v)$ satisfies the assumptions (H1). Then the homogeneous steady state $(u_*,u_*,0)$ of the system \eqref{1-1} is linearly unstable if and only if $\gamma(u_{*})+u_{*}\gamma'(u_{*})< 0$ and there is at least an allowable wavenumber $k$ satisfying condition \eqref{WN}.
%		\begin{equation}\label{USC}
%		\gamma(u_{*})+u_{*}\gamma'(u_{*})< 0.
%		\end{equation}
\end{lemma}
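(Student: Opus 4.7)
The plan is to read off the instability criterion directly from the mode-by-mode matrix $M_k$ that is already set up in the linearization \eqref{PF-1}--\eqref{PF-2}. Since the eigenfunctions $W_k$ of the Neumann Laplacian form a complete basis, linear instability of $(u_*,u_*,0)$ is equivalent to the existence of an admissible wavenumber $k$ (i.e.\ an eigenvalue $k^2$ of $-\Delta$ under Neumann conditions in $\Omega$) for which $M_k$ has an eigenvalue with strictly positive real part.

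First I would exploit the block structure of $M_k$: the third row and column are decoupled, so $\rho_3(k^2) = -k^2 - u_* F'(0)$ is always an eigenvalue, and by (H2) it satisfies $\rho_3(k^2)<0$ for every $k\ge 0$. The remaining two eigenvalues are roots of the quadratic $\rho^2 + a_1(k^2)\rho + a_0(k^2)=0$ with $a_1,a_0$ as displayed. Applying Routh--Hurwitz, both roots have negative real part iff $a_1(k^2)>0$ and $a_0(k^2)>0$; since $a_1(k^2)=1+(D+\gamma(u_*))k^2>0$ is automatic, instability at mode $k$ is equivalent to $a_0(k^2)<0$, and neutrality to $a_0(k^2)=0$.

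Finally I would factor
\begin{equation*}
a_0(k^2) = k^2\bigl[D\gamma(u_*)k^2 + \gamma(u_*) + u_*\gamma'(u_*)\bigr],
\end{equation*}
and note that the bracket is linear in $k^2$ with positive slope $D\gamma(u_*)>0$. Hence $a_0(k^2)<0$ for some $k>0$ if and only if the bracket is negative at $k=0$, i.e.\ $\gamma(u_*)+u_*\gamma'(u_*)<0$, in which case the set of destabilizing wavenumbers is exactly the interval $0<k^2<\bar{k}$ of \eqref{WN}. Combined with the discreteness of the Neumann spectrum on a bounded domain, this yields the stated equivalence: instability occurs iff $\gamma(u_*)+u_*\gamma'(u_*)<0$ and at least one admissible $k^2$ lies in $(0,\bar{k})$. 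The argument is almost entirely linear algebra; the only point requiring a little care is to verify that the decoupled eigenvalue $\rho_3(k^2)$ cannot itself cause instability, which is immediate from $F'(0)>0$ and $u_*>0$.
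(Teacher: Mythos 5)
Your proposal is correct and follows essentially the same route as the paper: exploit the decoupling of the $w$-mode (whose eigenvalue $-k^2-u_*F'(0)$ is always negative by (H2)), apply the Routh--Hurwitz criterion to the remaining quadratic with $a_1(k^2)>0$ automatic, and reduce instability to $a_0(k^2)<0$, which after factoring out $k^2$ gives exactly the condition $\gamma(u_*)+u_*\gamma'(u_*)<0$ together with an admissible wavenumber in the interval \eqref{WN}. The only addition beyond the paper's discussion is your explicit remark on neutrality at $a_0=0$ and the completeness of the Neumann eigenbasis, which is consistent with the paper's implicit use of the mode expansion \eqref{PF-2}.
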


\subsection{Simulations and questions}
 In section 5.1, we identify the instability parameter regimes for the possible pattern formation. But this linear instability result is not sufficient to conclude that there are non-constant stationary (pattern) solutions. Now we want to numerically test in one dimension whether non-constant stationary patterns exist for $\gamma(v)$ satisfying the conditions in Lemma \ref{WN}. For definiteness in the simulation, we assume $\Omega=(0,l)$ and consider
	\begin{equation*}
	\gamma(v)=\gamma_1+\gamma_0 e^{-\lambda v}, \ \ F(w)=\frac{w^2}{1+w^2}
	\end{equation*}
where $\gamma_0, \gamma_1$ and $\lambda$ are positive constants. Then the condition $\gamma(u_{*})+u_{*}\gamma'(u_{*})=\gamma_1+\gamma_0 e^{-\lambda u_*}(1-\lambda u_*)< 0$ in Lemma \ref{WN} amounts to
\begin{equation}\label{con1}
u_*>\frac{1}{\lambda} \  \ \text{and}\ \ \frac{\gamma_1}{\gamma_0}e^{\lambda u_*}<\lambda u_*-1.
\end{equation}
Since $k=\frac{\pi}{l}$, then the condition \eqref{WN} becomes
\begin{equation}\label{con2}
D<-\frac{\gamma(u_{*})+u_{*}\gamma'(u_{*})}{\gamma(u_{*})}\cdot \frac{l^2}{(n\pi)^2}=\frac{\gamma_1+\gamma_0e^{-\lambda u_*}(1-\lambda u_*)}{\gamma_1+\gamma_0e^{-\lambda u_*}}\cdot \frac{l^2}{(n\pi)^2}.
\end{equation}
Therefore if we choose appropriate values of $\gamma_0, \gamma_1, \lambda, u_*$ and $l$ so that the conditions \eqref{con1}-\eqref{con2} hold for some positive integer $n$, the pattern formation is expected from the results of Lemma \ref{WN}. Note that $u_*=\bar{u}_0+\alpha\bar{w}_0$. Hence for numerical simulations, we choose the initial value $(u_0, v_0, w_0)$ as a small random perturbation of the equilibrium $(u_*,v_*,0)$, and fix $\lambda=\alpha=1$. The system \eqref{1-1} is numerically solved by the MATLAB PDEPE solver. We choose $l=20, (u_*,v_*,0)=(4,4,0), \gamma_0=10, \gamma_1=0.1$ and show the numerical simulations for $D=0.1$ and $D=0.01$ in Fig.\ref{fig1} where we do observe the aggregated stationary patterns. This indicates for suitably small $D>0$, the system \eqref{1-1} with appropriate motility function $\gamma(v)$ admits the pattern formation, which complements the analytical results of Theorem \ref{LTB}. However the rigorous proof the existence of pattern (stationary) solutions leaves open in this paper and we shall investigate this question in the future. Note that the assumption (H1) rules out the possible degeneracy of  motility function $\gamma(v)$, which plays a key role in proving the results of this paper. Therefore another interesting open question is the global dynamics of \eqref{1-1} without assuming that $\gamma(v)$ has a positive lower bound such as $\gamma(v)=(1+v)^{-\lambda}$ or $\gamma(v)=e^{-\lambda v}$ with $\lambda>0$. Such motility function $\gamma(v)$ without positive lower bound has been used to study the global boundedness/asymptotics of solutions and stationary solutions for the two-component density-suppressed motility model (\ref{OR-2}) in \cite{JKW-SIAP-2018, MPW-PD-2019}, where the quadratic decay $-\mu u^2$ plays an essential role. However the three-component system \eqref{1-1} does not have such nice decay term and hence novel ideas are anticipated to solve the above-mentioned open question.

\begin{figure}[htbp]
\centering
\includegraphics[width=7cm,angle=0]{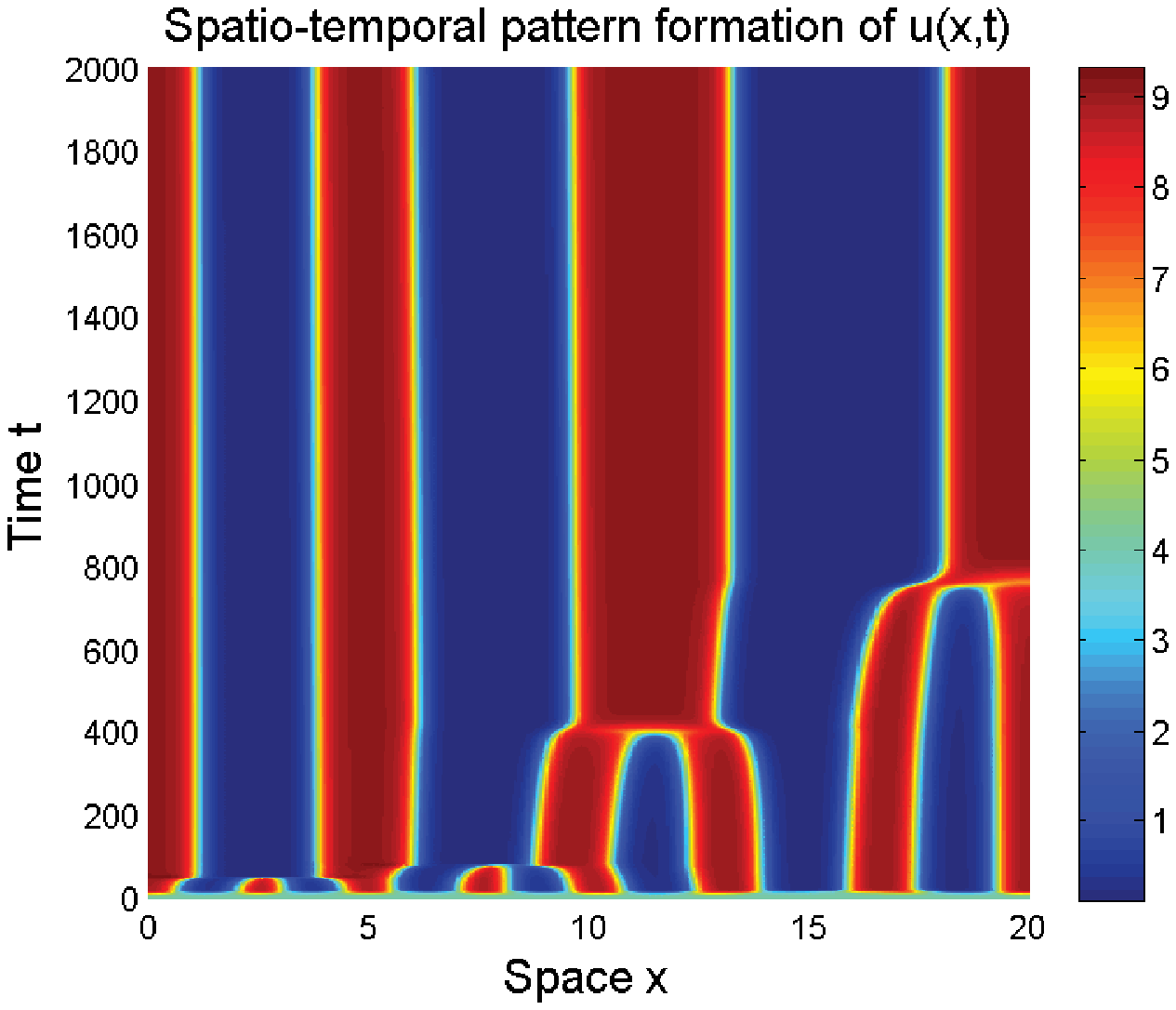}
\includegraphics[width=7cm,angle=0]{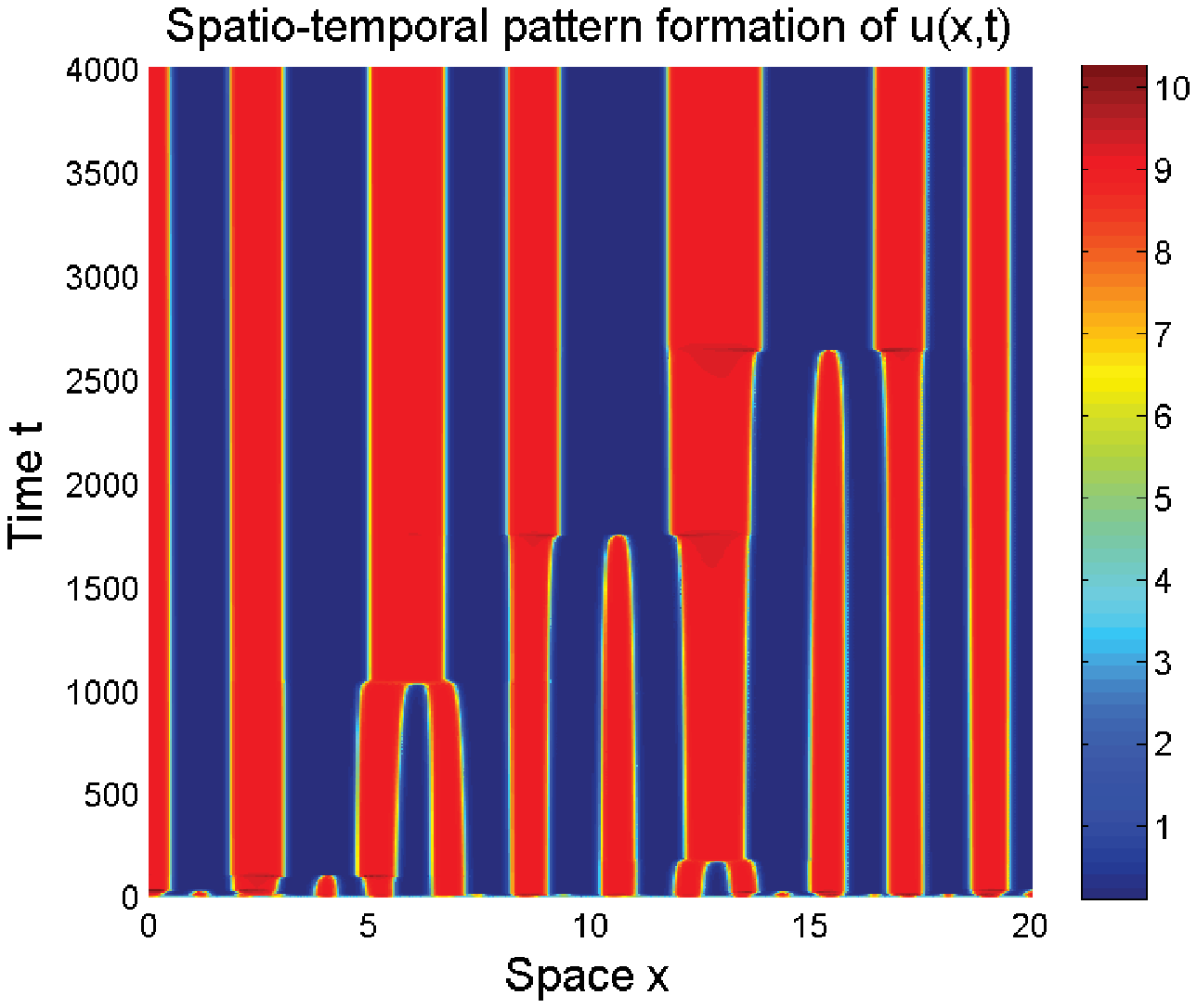}

\includegraphics[width=7cm,angle=0]{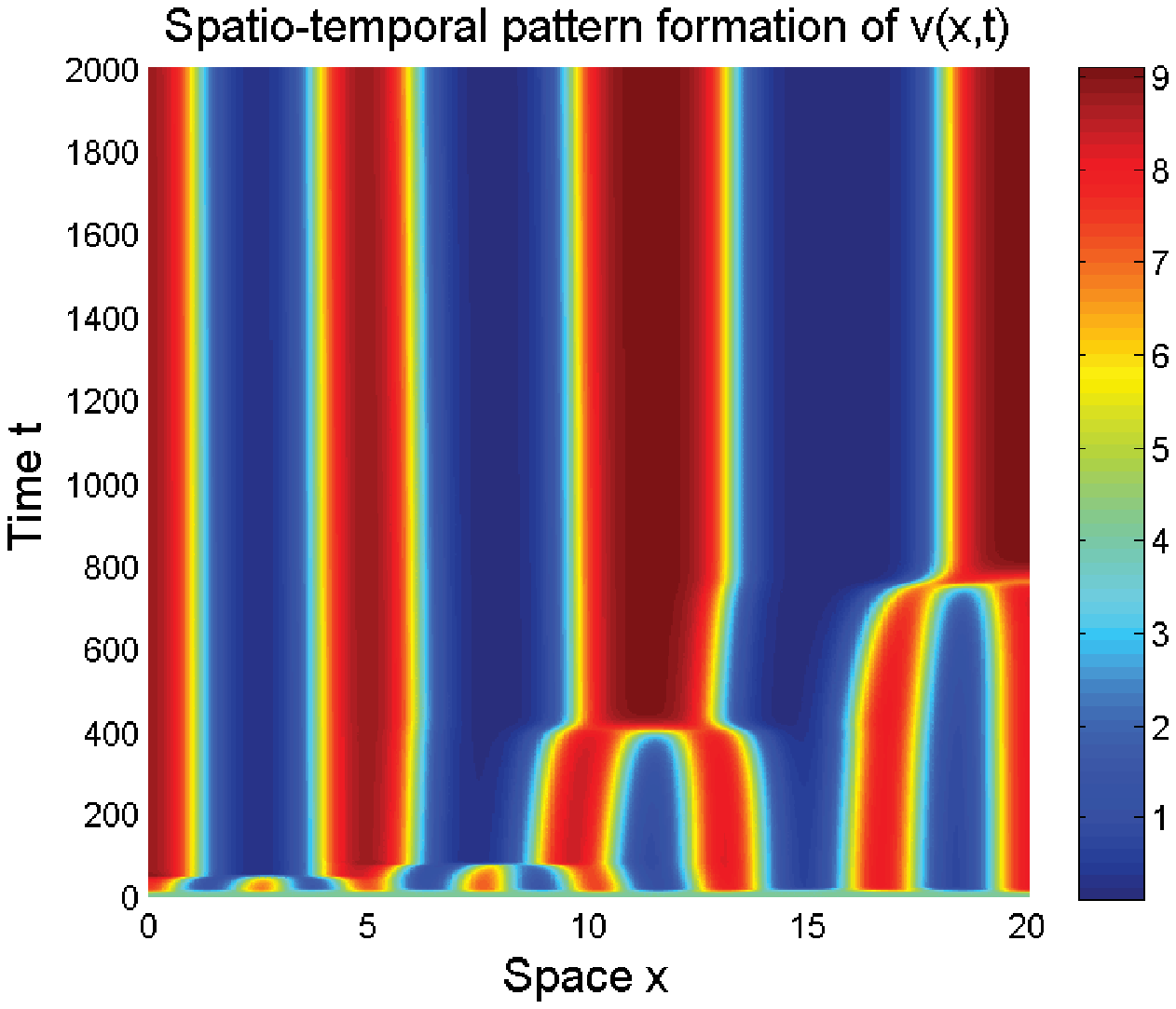}
\includegraphics[width=7cm,angle=0]{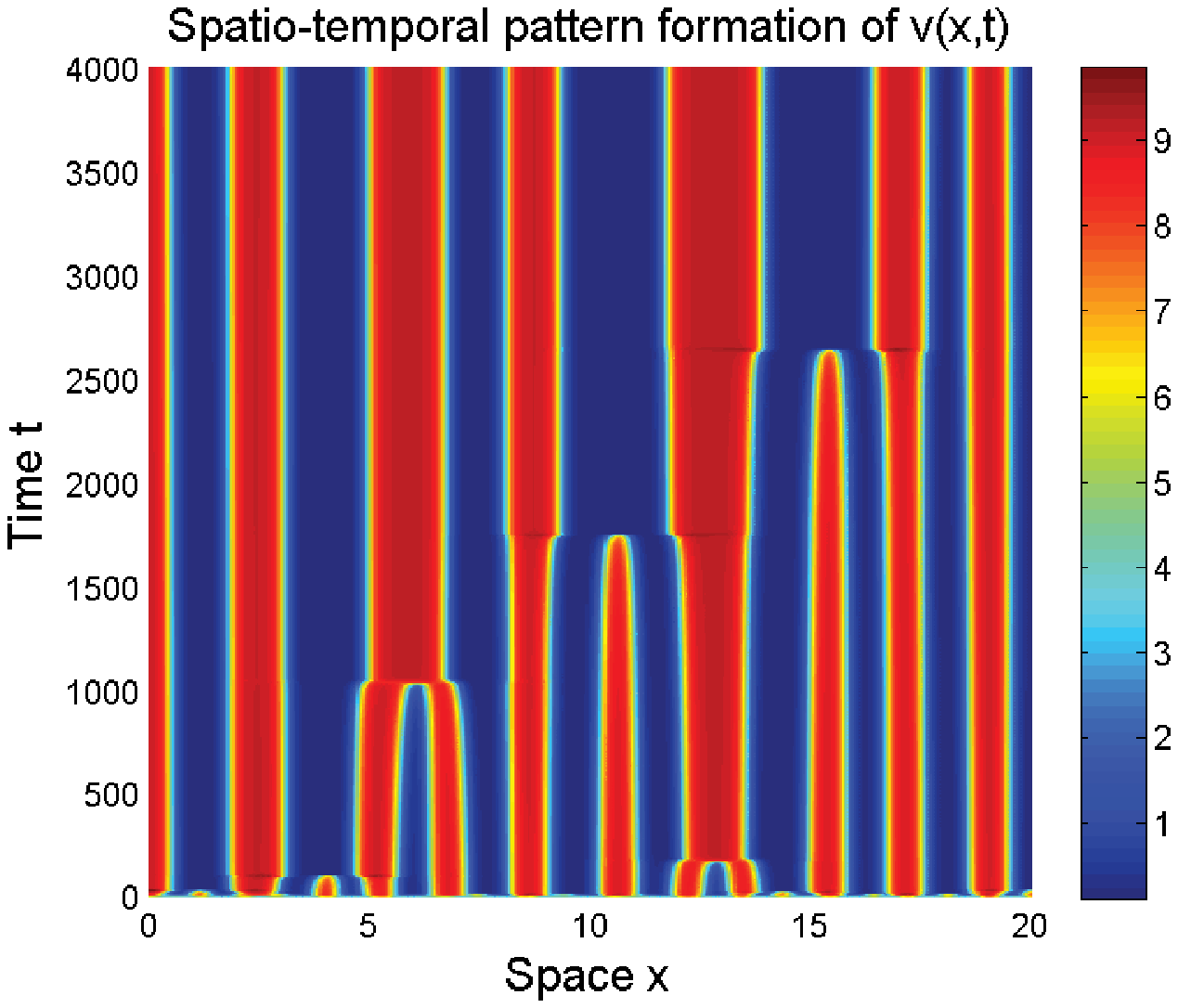}

$D=0.1$ \hspace{5cm} $D=0.01$
\caption{Numerical simulations of the pattern formation to the system \eqref{1-1} in $[0,20]$ with $\gamma(v)=0.1+10e^{-v}, F(w)=\frac{w^2}{1+w^2}$, where $\lambda=\alpha=1$ and $(u_0, v_0, w_0)$ is set as a small random perturbation of the constant steady state $(4,4,0)$. There is no pattern formation for $w$ and hence the numerical simulation of $w$ is not shown here.}
\label{fig1}
\end{figure}
	
%\begin{figure}[htbp]
%\centering
%\includegraphics[width=7cm,angle=0]{pattern1nu}
%\includegraphics[width=7cm,angle=0]{pattern1nnu}
%
%\includegraphics[width=7cm,angle=0]{pattern1nv}
%\includegraphics[width=7cm,angle=0]{pattern1nnv}
%
%$D=0.1$ \hspace{5cm} $D=0.01$
%\caption{Numerical simulations of the pattern formation to the system \eqref{1-1} with $\gamma(v)=e^{-v}, F(w)=\frac{w^2}{1+w^2}$, where $\lambda=\alpha=1$ and $(u_0, v_0, w_0)$ is set as a small random perturbation of the constant equilibrium $(8,8,0)$. There is no pattern formation for $w$ and hence the numerical simulation of $w$ is not shown.}
%\label{fig2}
%\end{figure}

\bigbreak
\noindent \textbf{Acknowledgment.}
We are grateful to the referee for several helpful comments improving our results. The research of H.Y. Jin was supported by the NSF of China (No. 11871226), Guangdong Basic and Applied Basic Research Foundation (No. 2020A1515010140), Guangzhou Science and Technology Program (No. 202002030363) and the Fundamental Research Funds for the Central Universities. The research of S. Shi was supported by Project Funded by the NSF of China No. 11901400. The research of Z.A. Wang was supported by the Hong Kong RGC GRF grant No. 15303019 (Project P0030816).

\end{document}